\DeclareMathOperator{\Forall}{\;\forall}
\DeclareMathOperator{\st}{\,|\,}
\DeclareMathOperator{\C}{\mathbb{C}}
\DeclareMathOperator{\R}{\mathbb{R}}
\DeclareMathOperator{\Z}{\mathbb{Z}}
\newtheorem{theorem}{Theorem}
\numberwithin{theorem}{section}
\newtheorem{lemma}[theorem]{Lemma}
\newtheorem{proposition}[theorem]{Proposition}
\newtheorem{corollary}[theorem]{Corollary}
\theoremstyle{definition}
\newtheorem{example}[theorem]{Example}
\newtheorem{remark}[theorem]{Remark}
\newtheorem{question}[theorem]{Question}
\title{Ferrers Graphs, D-Permutations, and Surjective Staircases}
\author[Lazar]{Alexander Lazar}
\address{Department of Mathematics, KTH Royal Institute of Technology, Stockholm 114 28, Sweden}
\email{alelaz@kth.se}
\date{October 27, 2021}
\begin{document}
\begin{abstract}
We introduce a new family of hyperplane arrangements inspired by the homogenized Linial arrangement (which was recently introduced by Hetyei), and show that the intersection lattices of these arrangements are isomorphic to the bond lattices of Ferrers graphs. Using recent work of Lazar and Wachs we are able to give combinatorial interpretations of the characteristic polynomials of these arrangements in terms of permutation enumeration. For certain infinite families of these hyperplane arrangements, we are able to give generating function formulas for their characteristic polynomials. To do so, we develop a generalization of Dumont's surjective staircases, and introduce a polynomial which enumerates these generalized surjective staircases according to several statistics. We prove a recurrence for these polynomials and show that in certain special cases this recurrence can be solved explicitly to yield a generating function. We also prove refined versions of several of these results using the theory of complex hyperplane arrangements. 
\end{abstract}
\keywords{Genocchi numbers, hyperplane arrangements, characteristic polynomials, Ferrers graphs, surjective staircases}
\maketitle
\tableofcontents

\section{Introduction}
The aim of this paper is to generalize recent results of Lazar and Wachs \cite{Type_A_Paper, LaWa} on the combinatorics of the \emph{homogenized Linial arrangement} in several different directions.

The homogenized Linial arrangement, which was introduced by Hetyei in \cite{Alternation_Acyclic}, is the real hyperplane arrangement $\mathcal{H}_{2n-1}$ in
$$\{(x_1,\dots,x_{n+1},y_1,\dots,y_{n})\st x_i,\,y_i \in \R\} = \R^{2n+1}$$
given by
$$\mathcal{H}_{2n-1} = \{x_i -x_j = y_i \st 1\leq i < j \leq n+1\}.$$
Hetyei showed that the number of regions of $\mathcal{H}_{2n-1}$ is equal to the \emph{median Genocchi number} $h_{n}$. The median Genocchi numbers and their partner sequence the \emph{Genocchi numbers} $g_n$ are classical sequences that appear in several areas of combinatorics (see \cite{Interpretations_Combinatoires, Alternation_Acyclic, Type_A_Paper, Randrianarivony_Du_Fo_Poly, sund, Zeng_Du_Fo_poly}, among others) and number theory (e.g., \cite{Barsky_Dumont}).

Intersecting $\mathcal{H}_{2n-1}$ with the subspace $y_1 = y_2 = \cdots = y_n = 0$ yields the braid arrangement
$$\{x_i - x_j = 0 \st 1 \leq i < j \leq {n+1}\}$$
in $\R^{n+1}$, while intersecting it with $y_{1} = y_2 = \cdots = y_n = 1$ yields the \emph{Linial arrangement}
$$\{x_i - x_j = 1 \st 1\leq i < j \leq n+1\}$$
in $\R^{n+1}$. We can therefore think of $\mathcal{H}_{2n-1}$ as being defined by a particular choice of homogenization of the defining equations for the hyperplanes in the braid or Linial arrangements. 

In \cite{Type_A_Paper}, Lazar and Wachs studied this arrangement further. They showed that the intersection lattice of $\mathcal{H}_{2n-1}$ is isomorphic to the bond lattice of a bipartite graph $\Gamma_{2n}$ and used that interpretation to give a combinatorial interpretation of the coefficients of the characteristic polynomial $\chi_{\mathcal{L}(\mathcal{H}_{2n-1})}(t)$ of $\mathcal{H}_{2n-1}$ in terms of a new class of permutations they called \emph{D-permutations}. 

Lazar and Wachs showed that the D-permutations on $[2n]$ are bijection with certain elements of the class of \emph{surjective staircases} (due to Dumont \cite{Interpretations_Combinatoires}). Using a generating function result of Randrianarivony \cite{Randrianarivony_Du_Fo_Poly} and Zeng \cite{Zeng_Du_Fo_poly} for surjective staircases, Lazar and Wachs proved (\cite[Theorem 5.5]{Type_A_Paper}) a generating function formula for $\chi_{\mathcal{L}(\mathcal{H}_{2n-1})}(t)$:

\begin{equation}\label{introgenchareq} \sum_{n\geq 1} \chi_{\mathcal L(\mathcal H_{2n-1})}(t) \, x^n =  \sum_{n\geq 1}\frac{  (t-1)_{n-1} (t-1)_{n} \,x^n}{\prod_{k=1}^n(1-k(t-k)x)},
\end{equation}
where $(a)_{n}$ denotes the falling factorial  $a(a-1)\cdots (a-(n-1))$.

The various perspectives used in \cite{Type_A_Paper} --- hyperplane arrangements, $\Gamma_{2n}$ and D-permutations, and surjective staircases --- suggest several possible avenues to generalize the results of that paper. In this paper, we consider three of them.

In the course of \cite{Type_A_Paper}, the authors found it useful to consider a larger class of graphs $\Gamma_V$ where $V \subseteq [2n]$ for any $n$. One possible direction for generalizing of the results of \cite{Type_A_Paper} would therefore be to study the combinatorics of $\Gamma_V$ for arbitrary $V$. The study of these $\Gamma_V$ was initiated in \cite{Type_A_Paper}, where the authors showed that the coefficients of the characteristic polynomial of the bond lattice of $\Gamma_V$ can be interpreted in terms of the D-permutations on $V$. The authors also noted that $\Gamma_{2n}$ belongs to a class of graphs known as \emph{Ferrers graphs}, and remarked that the same was true for all $\Gamma_V$.

Another possible direction for generalizing \cite{Type_A_Paper} is suggested by the theory of hyperplane arrangements. Beyond the braid and Linial arrangements, there are a number of other \emph{deformations} of the braid arrangement (studied in detail in \cite{Defcox}), such as the \emph{Shi arrangement}
$$\{x_i - x_j = 0,1 \st 1\leq i < j \leq n\},$$
\emph{semiorder arrangement}
$$\{x_i - x_j = \pm 1 \st 1\leq i <j \leq n\},$$
and \emph{Catalan arrangement}
$$\{x_i - x_j = -1,0,1 \st 1\leq i < j \leq n\}.$$
Therefore, one might wish to study the combinatorics of homogenizations of these deformations. Let $\nu = (\nu_1,\dots,\nu_n)$ be a weak composition of $m$. That is, $\nu$ is an ordered $n$-tuple of nonnegative integers whose sum is $m$. For all such $\nu$, we define the \emph{homogenized $\nu$-arrangement} $\mathcal{H}_{\nu}$ to be
$$\{x_i - x_j = y_i^{(\ell)} \st 1\leq i < j \leq n+1, 1\leq \ell \leq \nu_i\}.$$
For instance, taking $\nu = (2,2,\dots,2)$ gives us the arrangement
$$\{x_i - x_j = y_i^{(1)},y_i^{(2)} \st 1\leq i < j \leq n+1\}.$$
Intersecting this arrangement with the subspace where $y_1^{(1)} = y_2^{(1)} = \cdots = y_n^{(1)} = 0$ and $y_1^{(2)} = y_2^{(2)} = \cdots = y_n^{(2)} = 1$ yields the Shi arrangement.

The third possible direction is to generalize the notion of surjective staircases. Surjective staircases are surjective functions $F: [2n] \to \{2,4,\dots,2n\}$ that satisfy $F(x) \geq x$. A key tool used in \cite{Type_A_Paper} to prove Equation (\ref{introgenchareq}) is the \emph{generalized Dumont--Foata polynomial} $\Lambda_{2n}$, which enumerates the surjective staircases with domain $[2n]$ with respect to six statistics. Randrianarivony \cite{Randrianarivony_Du_Fo_Poly} and Zeng \cite{Zeng_Du_Fo_poly} proved a recurrence and a generating function formula for the $\Lambda_{2n}$, and it is this formula which eventually yields Equation (\ref{introgenchareq}). One natural generalization would be to consider surjective functions $F: V \to \{\text{even elements of } V\}$ which satisfy $F(x) \geq x$ for any finite $V \subset \Z_{>0}$. Given the role that the generalized Dumont--Foata polynomials play in \cite{Type_A_Paper}, we also wish to define analogous polynomials for domains other than $[2n]$.

The pleasant surprise of these three generalizations is that they remain very closely related. In this paper, we show that for any $\nu$ the intersection lattice of $\mathcal{H}_{\nu}$ is isomorphic to the bond lattice of some $\Gamma_V$. Hence, the coefficients of the characteristic polynomial of $\mathcal{H}_{\nu}$ have an interpretation in terms of the D-permutations on $V$. We then develop the theory of surjective staircases with domain $V$: we define a generalization $\Lambda_V$ of the generalized Dumont--Foata polynomials $\Lambda_{2n}$ and prove a recurrence for the $\Lambda_V$ generalizing the Randrianarivony--Zeng recurrence for $\Lambda_{2n}$.

For certain infinite families of homogenized $\nu$-arrangements, we are able to solve our new recurrence explicitly to obtain generating function formulas for their corresponding $\Lambda_V$.

For one such family that we denote $\mathcal{H}_{n,k}$, this new machinery lets us prove an analog of Equation (\ref{introgenchareq}):
\begin{equation}\label{introkstepgeneq}
\sum_{n\geq 1}\chi_{\mathcal{L}(\mathcal{H}_{n,k})}(t)u^n = \sum_{n\geq 1} \frac{(t-1)_{n-1}\left((t-1)_{n}\right)^ku^{n}}{\prod_{i=1}^{n}\left(1-i(t-i)^{k}u\right)},
\end{equation}
where $(a)_n = a(a-1)\cdots(a-(n-1))$.

These techniques also let us derive a generating function for the chromatic polynomial of the complete bipartite graph $K_{n,k}$:

\begin{equation}
\sum_{n\geq 1}\text{ch}(K_{n,k})(t)u^n =\sum_{n\geq 1}\frac{(t)_{n}\left[(t-n)^k + (n-1)(t-(n-1))^{k-1}\right]u^n}{\prod_{i=0}^{n-1}(1-iu)}.
\end{equation}

The rest of this paper is structured as follows.

\begin{itemize} 
\item In Section 2 we state some preliminary definitions and results. 

\item In Section 3 we give a detailed proof of the equivalence between Ferrers graphs and the graphs $\Gamma_V$ (remarked without proof in \cite[Remark 3.1]{Type_A_Paper}). This equivalence is interesting in its own right, but the language of Ferrers graphs is also convenient for many of our later results.

\item In Section $4$ we formally introduce the hyperplane arrangements $\mathcal{H}_{\nu}$. We that show each such arrangement is isomorphic to the graphic arrangement of a Ferrers graph, and moreover that the arrangement of any Ferrers graph can be obtained in this way. 

\item In Section 5 we give generating function formulas for the characteristic polynomials of two infinite families of Ferrers graphs, and also give refined versions of these generating functions arising from complex hyperplane arrangements (\`{a} la \cite{LaWa}). 

\item In Section 6 we develop the theory of surjective staircases with arbitrary domains, as well as the generalization $\Lambda_V$ of the generalized Dumont--Foata polynomial $\Lambda_{2n}$. We state and prove a recurrence for $\Lambda_V$ that extends the Randrianarivony--Zeng recurrence for $\Lambda_{2n}$.

\item In Section 7 we prove the generating function formulas from Section 5. 

\item In Section 8 we outline an extension of the results of Sections 6 and 7 to complex hyperplane arrangements, and prove the refined generating function formulas of Section 5. 

\item In Section 9 we give some final remarks and questions for further study.
\end{itemize}

Many of the results of this paper were first announced in the extended abstract \cite{Extended_Abstract} and appeared in detail in the author's Ph.D. dissertation \cite[Chapter 5]{Alex_Thesis}.

\section{Preliminaries}
\subsection{Hyperplane Arrangements and Geometric Lattices}
A \emph{real hyperplane arrangement} is a finite collection $\mathcal{H}$ of hyperplanes in $\R^d$. The complement $\R^d\setminus \mathcal{H}$ consists of finitely many connected components, which are called the \emph{regions of $\mathcal{H}$}. The combinatorial data of $\mathcal{H}$ can be collected into the \emph{intersection poset} $\mathcal{L}(\mathcal{H})$, which consists of the intersections of hyperplanes in $\mathcal{H}$ (viewed as affine subspaces of $\R^d$) ordered according to reverse containment. We define the \emph{rank} $\mathrm{rk}(\mathcal{H})$ of $\mathcal{H}$ to be the length of $\mathcal{L}(\mathcal{H})$. 

If all of the hyperplanes in $\mathcal{H}$ have a nonempty common intersection, we say $\mathcal{H}$ is \emph{central}. The intersection poset of a central arrangement is a geometric lattice, and if $\mathcal{H}$ is non-central its intersection poset is a geometric semilattice (in the sense of \cite{geometric_semilattices}).

One important invariant of a hyperplane arrangement is the \emph{characteristic polynomial} $\chi_{\mathcal{L}(\mathcal{H})}(t)$ of its intersection poset. If $P$ is any ranked poset with bottom element $\hat{0}$, its characteristic polynomial is defined by
$$\chi_{P}(t) = \sum_{X \in P}\mu_{P}(\hat{0},X)t^{\mathrm{rk}(P)-\mathrm{rk}(X)},$$

where $\mu_{P}$ is the M\"{o}bius function of $P$.

A well-known formula of Zaslavsky \cite{Facing_Up_Arrangements} relates the number of regions $r(\mathcal{H})$ of $\mathcal{H}$ to the characteristic polynomial of $\mathcal{L}(\mathcal{H})$:
\begin{equation}
r(\mathcal{H}) = (-1)^{\mathrm{rk}(\mathcal{H})}\chi_{\mathcal{L}(\mathcal{H})}(-1).
\end{equation} 

\subsection{Graphs and Bond Lattices}

Associated to any finite graph $G$ with vertex set $V$ and edge set $E$ is the \emph{graphic hyperplane arrangement} $\mathcal{H}_{G}$, which is given by

$$\mathcal{H}_{G} = \{x_i - x_j = 0 \st \{i,j\} \in E\}.$$

The intersection lattice of $\mathcal{H}_G$ is called the \emph{bond lattice of $G$} and is denoted $\Pi_G$. An equivalent description of $\Pi_G$ can be given as a subposet of the lattice $\Pi_V$ of set partitions of $V$. A set partition $\pi = B_1|B_2|\cdots|B_k$ of $V$ belongs to $\Pi_G$ if and only if the induced subgraph $G|_{B_i}$ is connected for all $i$.

The bond lattice of $G$ also determines the chromatic polynomial $\mathrm{ch}(G)(t)$ in the sense that
$$\mathrm{ch}(G)(t) = t\cdot\chi_{\Pi_G}(t).$$

\subsection{The Homogenized Linial Arrangement and D-Permutations}
In \cite{Type_A_Paper}, Lazar and Wachs considered a family of bipartite graphs $\Gamma_V$ for $V$ a finite subset of $\Z_{>0}$. Given such a $V$, $\Gamma_V$ is the graph on vertex set $V$ with an edge $\{2i-1,2j\}$ whenever $2i-1 < 2j$. If $V = [2n]$, we omit the brackets and write $\Gamma_{2n}$. 

A permutation $\sigma \in \mathfrak{S}_{V}$ is a \emph{D-permutation} if $\sigma(i) \geq i$ whenever $i$ is odd and $\sigma(i) \leq i$ whenever $i$ is even. We write $\mathcal{D}_V$ for the set of $D$-permutations of $V$.

\begin{example}
The elements of $\mathcal{D}_4$ are given below.
\begin{center}
\begin{tabular}{cccc}
 $(1)(2)(3)(4)$ & $(1,2)(3)(4)$ & $(1,2)(3,4)$ & $(1,4)(2)(3)$\\
 $(1,4,2)(3)$ & $(1,3,4)(2)$ & $(3,4)(1)(2)$ & $(1,3,4,2)$
\end{tabular}
\end{center}
\end{example}

In \cite{Type_A_Paper}, the authors compute the coefficients of $\chi_{\Pi_{\Gamma_V}}$ in terms of D-permutations:

\begin{theorem}[{\cite[Theorem 3.5]{Type_A_Paper}}] \label{mobth}
	Let $V$ be a finite subset of $\Z_{>0}$.  For  all $\pi \in  \Pi_{\Gamma_{V}}$,
	$$(-1)^{|\pi|}\mu_{\Pi_{\Gamma_{V}}}(\hat{0},\pi) = |\{\sigma \in \mathcal{D}_{V} \st \mathrm{cyc}(\sigma) = \pi\} |,$$
	where $\mathrm{cyc}(\sigma)$ is the set partition whose blocks are comprised of the elements of the cycles of $\sigma$. 
	Consequently, \begin{equation} \label{chardumeq}  \chi_{\Pi_{\Gamma_V}}(t) = 
		\sum_{k=1}^{2n} s_D(V,k) t^{k-1},\end{equation}
	where $(-1)^{k}s_D(V,k)$ is equal to  the number of D-permutations on $A$ with exactly $k$ cycles.
\end{theorem}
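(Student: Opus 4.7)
The plan is to prove the Möbius identity first and then read off the formula for $\chi_{\Pi_{\Gamma_V}}(t)$ as a consequence. The proof splits naturally into a multiplicativity step that reduces the general claim to the case $\pi = \hat 1$, followed by an inductive computation at the top of the bond lattice. For the reduction, if $\pi \in \Pi_{\Gamma_V}$ has blocks $B_1,\dots,B_k$ then the interval $[\hat 0,\pi]$ factors as $\prod_i \Pi_{\Gamma_V|_{B_i}}$, and a direct check of the definition of $\Gamma$ on a subset gives $\Gamma_V|_{B_i} = \Gamma_{B_i}$, so
\[
\mu_{\Pi_{\Gamma_V}}(\hat 0,\pi) = \prod_{i=1}^k \mu_{\Pi_{\Gamma_{B_i}}}(\hat 0,\hat 1).
\]
On the combinatorial side, specifying $\sigma \in \mathcal{D}_V$ with $\mathrm{cyc}(\sigma)=\pi$ amounts to choosing a single-cycle D-permutation on each block $B_i$; for this bookkeeping to work, one needs the structural lemma that every cycle of a D-permutation induces a connected subgraph of $\Gamma_V$, so that $\mathrm{cyc}(\sigma)$ always lies in $\Pi_{\Gamma_V}$ to begin with.

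With the reduction in hand, it remains to handle $\pi = \hat 1$ for each connected $\Gamma_V$. I would argue by induction on $|V|$: the defining identity $\sum_{\pi \in \Pi_{\Gamma_V}} \mu(\hat 0,\pi) = 0$ (valid since $\hat 0 \neq \hat 1$) combined with the inductive hypothesis applied to the smaller bond lattices $\Pi_{\Gamma_{B_i}}$ collapses the claim to a single combinatorial identity, namely
\[
\sum_{\sigma \in \mathcal{D}_V} (-1)^{\mathrm{cyc}(\sigma)} = 0 \qquad (|V|\ge 2,\ \Gamma_V \text{ connected}),
\]
which I would prove via a sign-reversing involution on $\mathcal{D}_V$ that changes $\mathrm{cyc}$ by exactly one. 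A natural candidate is to locate the smallest $v \in V$ admitting a canonical local toggle---for instance the smallest $v$ that is either a fixed point (which one would absorb into the next available cycle) or the minimum of a non-trivial cycle (which one would detach as a fixed point)---and to swap between the two states.

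The main obstacle, I expect, is designing this toggle so that the D-permutation inequalities $\sigma(i) \ge i$ (odd $i$) and $\sigma(i) \le i$ (even $i$) are preserved on the nose: the classical ``smallest fixed point'' involution for ordinary permutations does not respect these parity-based constraints directly, so the construction will have to exploit the Ferrers structure of $\Gamma_V$ (odd $a$ joined to even $b$ exactly when $a < b$) to guarantee that the modified permutation still lies in $\mathcal{D}_V$. Once the Möbius identity has been established, the stated expression for $\chi_{\Pi_{\Gamma_V}}(t)$ follows by expanding the definition $\chi = \sum_{\pi} \mu(\hat 0,\pi)\, t^{\mathrm{rk}(\Pi_{\Gamma_V})-\mathrm{rk}(\pi)}$, using $\mathrm{rk}(\pi) = |V| - (\text{number of blocks of }\pi)$ in the bond lattice, and grouping terms by the number $k$ of cycles of the underlying D-permutation.
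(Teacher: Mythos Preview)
This theorem is not proved in the present paper---it is quoted from \cite{Type_A_Paper}---so there is no in-paper argument to compare against directly. From the discussion in Section~\ref{DowlingSec}, however, one can see the shape of the original proof: Lazar and Wachs show that for a suitable edge order on $\Gamma_V$ the NBC sets are precisely the ID forests on $V$, and they give a bijection between ID forests with $k$ components and D-permutations on $V$ with $k$ cycles. Whitney's NBC interpretation of the M\"obius function of a geometric lattice then delivers the identity immediately, with no induction and no involution needed.

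Your multiplicative reduction to $\pi=\hat 1$ is correct and standard, and the structural lemma is easy (a nontrivial D-cycle has odd minimum and even maximum, so its support induces a connected $\Gamma_C$). The remaining step, however, is a genuine gap: you have correctly reduced the claim to the identity $\sum_{\sigma\in\mathcal D_V}(-1)^{c(\sigma)}=0$ for connected $\Gamma_V$ with $|V|\ge 2$, but you have not produced the sign-reversing involution, and the ``detach/reattach the minimum'' toggle you propose does not preserve the D-conditions. Concretely, take $V=\{1,2,4\}$ and $\sigma=(1,4,2)\in\mathcal D_V$; here $a=1$, $\sigma^{-1}(1)=2$, and the toggle would set $\sigma'(2)=\sigma(1)=4>2$, violating the even-descent constraint. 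The D-inequalities couple $\sigma(a)$ and $\sigma^{-1}(a)$ only very loosely, so a purely local move at the minimum cannot be made to work without additional structure. If you want to salvage the involution approach, the natural route is to pass through the ID-forest bijection of \cite{Type_A_Paper} and toggle a canonical edge on the forest side---but at that point you have essentially rebuilt the NBC proof.
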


\subsection{Ferrers Graphs}
In \cite{Enumerative_Ferrers_Graphs}, Ehrenborg and van Willigenburg introduced a family of graphs called \emph{Ferrers graphs}. Since their introduction, they have been studied in a variety of contexts such as combinatorial commutative algebra \cite{Monomial_Toric_Ideals}, chip-firing \cite{Selig_Smith_Steingrimsson}, matroid theory \cite{Biconed_Graphs}, and in terms of graph invariants like their Boolean complexes \cite{Boolean_Complex_Ferrers}. A Ferrers graph is a bipartite graph on vertex partition $R = \{r_1,\dots, r_n\}$ and $C = \{c_1,\dots, c_m\}$ such that if $\{r_i,c_j\}$ is an edge, then $\{r_{\ell},c_k\}$ is an edge for all $i\leq \ell \leq n$ and $1 \leq k \leq j$, and $\{r_1,c_1\}$ and $\{r_n,c_m\}$ are edges.\footnote{Ehrenborg and van Willigenburg's definition is slightly different from ours --- they reverse the indexes of the $r$'s.} To any Ferrers graph $G$, we can associate an integer partition $\lambda = (\lambda_n,\dots,\lambda_1)$, where $\lambda_i$ is the degree of $r_i$ in $G$. Note that we are distinguishing between $R$ and $C$ in this definition.

If we draw the Ferrers diagram for $\lambda$, label the rows $r_1$ through $r_n$ from bottom to top, and label the columns $c_1$ through $c_m$ from left to right, then the Ferrers diagram for $\lambda$ will have a square in the row labeled $r_i$ and column labeled $c_j$ if and only if $\{r_i,c_j\}$ is an edge of $G$.

\begin{example}
The graph $\Gamma_6$ on vertex set $R = \{2,4,6\}$ and $C = \{1,3,5\}$ is shown below:

\begin{center}
\includegraphics{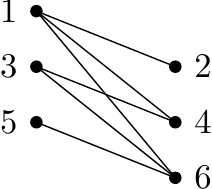}
\end{center}

It is not hard to see that $\Gamma_6$ is a Ferrers graph with associated partition $\lambda = (3,2,1)$, whose Ferrers diagram is seen below:
\begin{center}
\begin{ytableau}
\none[] & \none[1] & \none[3] & \none[5]\\
\none[6] & \; & \; & \;\\
\none[4] & \; & \;\\
\none[2] & \;
\end{ytableau}
\end{center}
\end{example}

The following proposition is an immediate consequence of the Ferrers diagram description of the edges of a Ferrers graph.

\begin{proposition}\label{FerrersNameProp}
If $G$ and $H$ are Ferrers graphs, then $G \cong H$ if and only if $G$ and $H$ have the same associated partition, or have conjugate associated partitions. 
\end{proposition}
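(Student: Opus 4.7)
The plan is to argue both directions of the biconditional directly. For the ``if'' direction, I would construct explicit isomorphisms: if $G$ and $H$ have the same associated partition $\lambda$, then by the Ferrers-diagram characterization of edges, their edge sets coincide under matching vertex labels, so they are literally equal as labeled graphs (hence isomorphic). If instead the partitions are conjugate --- say $G$ has partition $\lambda$ and $H$ has partition $\lambda'$ --- then the map sending $r_i^G \mapsto c_i^H$ and $c_j^G \mapsto r_j^H$ should carry edges to edges, because transposing a Ferrers diagram swaps rows with columns and replaces $\lambda$ by $\lambda'$.

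For the ``only if'' direction, my approach rests on two observations. First, every Ferrers graph is connected: the Ferrers property applied to the prescribed edge $\{r_n,c_m\}$ forces $r_n$ to be adjacent to every $c_j$, and every $r_i$ to be adjacent to $c_1$, so every vertex lies within distance two of $r_n$. Second, a connected bipartite graph has a unique bipartition into its two sides. Hence for any isomorphism $\phi : G \to H$, either $\phi(R_G)=R_H$ and $\phi(C_G)=C_H$, or else $\phi(R_G)=C_H$ and $\phi(C_G)=R_H$.

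In the first case, $\phi|_{R_G}$ is a degree-preserving bijection from $R_G$ to $R_H$, so the multiset $\{\deg_G(r) : r \in R_G\}$ equals $\{\deg_H(r) : r \in R_H\}$; since the associated partition is precisely this multiset arranged in weakly decreasing order, the two associated partitions agree. In the second case, the same argument applied to the bijection $\phi|_{R_G} : R_G \to C_H$ shows that the associated partition of $G$ equals the multiset of $C$-degrees of $H$. But for any Ferrers graph with associated partition $\mu$, the multiset of $C$-degrees is exactly the conjugate partition $\mu'$ (by reading the Ferrers diagram by columns instead of rows), so the associated partitions of $G$ and $H$ are conjugate.

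The main obstacle I anticipate is simply the verification of connectedness of Ferrers graphs (and the standard fact that this forces uniqueness of the bipartition); after that, the argument reduces to bookkeeping about degree sequences read off the rows versus columns of the Ferrers diagram.
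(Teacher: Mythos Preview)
Your proof is correct. The paper does not actually give a proof of this proposition; it simply states that it ``is an immediate consequence of the Ferrers diagram description of the edges of a Ferrers graph.'' Your argument supplies precisely the details the paper omits: the ``if'' direction is indeed immediate from the Ferrers-diagram description (same diagram, or transposed diagram, yields the same edge relation), while your ``only if'' direction via connectedness and uniqueness of the bipartition is the natural way to make rigorous what the paper leaves implicit. Nothing is missing.
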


\section{Ferrers Graphs and \texorpdfstring{$\Gamma_V$}{ΓV}}\label{FerrersGraphsAndGammaSection}
Let $V$ be a finite subset of $\Z_{>0}$. Let $O(V)$ be the set of odd elements of $V$ and let $E(V)$ be the set of even elements of $V$. Suppose that $O(V) = \{2i_1 - 1 < \cdots < 2i_n - 1\}$ and $E(V) = \{2j_1 < \cdots < 2j_m\}$. We define the \emph{partition type} of $V$, denoted $\lambda(V)$, as follows. For each $k \in [m]$, let $\lambda_{k} \coloneqq \#\{a \st 2i_a - 1 < 2j_k\}$. Then $\lambda_m \geq \cdots \geq \lambda_1$, so $\lambda = (\lambda_m,\dots,\lambda_1)$ is an integer partition.

\begin{example}
Suppose that $V = \{1,3,7,9\} \sqcup \{2,4,6,10\}$. Then $\lambda(V) = (4,2,2,1)$. The Ferrers diagram of $(4,2,2,1)$ is shown below, with the rows labeled by the even elements of $V$ and the columns labeled by the odd elements of $V$:
\begin{center}
\begin{ytableau}
\none[] & \none[1] & \none[3] & \none[7] & \none[9]\\
\none[10] & \; & \; & \; & \;\\
\none[6] & \; & \;\\
\none[4] & \; & \;\\
\none[2] & \;
\end{ytableau}.
\end{center}
\end{example}

The following theorem explains our choice of labeling in the previous example.

\begin{theorem}\label{GammaFerrers}
Let $V$ be a finite subset of $\Z_{>0}$ such that $\max V$ is even and $\min V$ is odd. Then $\Gamma_V$ is a Ferrers graph with $R = E(V)$ and $C = O(V)$. Moreover the partition associated to $\Gamma_V$ is $\lambda(V)$. Conversely, given a Ferrers graph $G$, there is a finite subset $V$ of $\Z_{>0}$ such that $\Gamma_{V} \cong G$. 

\end{theorem}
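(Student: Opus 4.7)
The plan is to handle the two directions separately, translating the edge relation of $\Gamma_V$ into inequalities between indexed enumerations of $O(V)$ and $E(V)$. For the forward direction, I would write $O(V) = \{2i_1 - 1 < \cdots < 2i_n - 1\}$ and $E(V) = \{2j_1 < \cdots < 2j_m\}$, propose the labeling $r_k := 2j_k$ and $c_\ell := 2i_\ell - 1$, and observe that $\{r_i, c_j\}$ is an edge of $\Gamma_V$ exactly when $2i_j - 1 < 2j_i$. Ferrers monotonicity is then immediate from the chain $2i_k - 1 \leq 2i_j - 1 < 2j_i \leq 2j_\ell$ for $\ell \geq i$, $k \leq j$. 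The corner edges $\{r_1, c_1\}$ and $\{r_m, c_n\}$ are exactly where the parity hypotheses on $\min V$ and $\max V$ enter: forcing $\min V = 2i_1 - 1$ and $\max V = 2j_m$ promotes weak inequalities into strict ones by parity. The agreement of partitions $\lambda(\Gamma_V) = \lambda(V)$ then follows by counting degrees: $\deg_{\Gamma_V}(r_k) = \#\{a : 2i_a - 1 < 2j_k\} = \lambda_k$.

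For the converse, given a Ferrers graph $G$ with partition $\lambda = (\lambda_m, \ldots, \lambda_1)$ and $n := \lambda_m$, my plan is to build $V$ by a greedy interleaving construction parameterized by the multiplicities $\mu_j := \#\{k : \lambda_k = j\}$. The key observation is that, because $\lambda_1 \leq \cdots \leq \lambda_m$, the rows with $\lambda_k = j$ form a contiguous block; this lets me lay out the elements of $V$ in increasing order with parity pattern $(\mathrm{O}, \mathrm{E}^{\mu_1}, \mathrm{O}, \mathrm{E}^{\mu_2}, \ldots, \mathrm{O}, \mathrm{E}^{\mu_n})$, concretely assigning $c_1 := 1$ and then recursively inserting $\mu_j$ consecutive even integers after each $c_j$ before taking the next odd integer as $c_{j+1}$. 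The hypotheses $\lambda_m = n$ and $\lambda_1 \geq 1$ (the latter because $\{r_1, c_1\}$ is an edge of $G$) guarantee that $\min V = 1$ is odd and $\max V$ is even, so the forward direction applies. By design each row $r_k$ falls between $c_{\lambda_k}$ and $c_{\lambda_k + 1}$ (with the convention $c_{n+1} := \infty$), yielding $\lambda(V) = \lambda$, and Proposition \ref{FerrersNameProp} then concludes $\Gamma_V \cong G$.

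The main obstacle is the greedy construction in the converse, specifically handling repeated parts of $\lambda$: when $\lambda_k = \lambda_{k+1}$ we cannot place $r_k$ and $r_{k+1}$ in the same integer slot, so the argument must exploit that the block $\mathrm{E}^{\mu_j}$ can be realized by $\mu_j$ distinct consecutive even integers fitted between the flanking odd values $c_j$ and $c_{j+1}$ without any parity collision. Once this contiguity point is secured, everything else reduces to tracking inequalities between the indexed enumerations and invoking Proposition \ref{FerrersNameProp} for the final isomorphism.
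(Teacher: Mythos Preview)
Your proposal is correct and follows essentially the same approach as the paper: the forward direction is identical, and for the converse the paper phrases your greedy interleaving as a walk along the lower boundary of the Ferrers diagram of $\lambda$, adding the next available odd integer for each east step and the next available even integer for each north step---this produces exactly the parity pattern $(\mathrm{O},\mathrm{E}^{\mu_1},\mathrm{O},\mathrm{E}^{\mu_2},\dots,\mathrm{O},\mathrm{E}^{\mu_n})$ you describe, and both arguments conclude by invoking Proposition~\ref{FerrersNameProp}.
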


\begin{proof}
Recall that $\Gamma_V$ has an edge between $2i-1$ and $2j$ if and only if $2i-1 < 2j$. We see that $\{2i_1 - 1, 2j_1\}$ is an edge of $\Gamma_V$ (since $\min V = 2i_1-1$), and $\{2i_m - 1, 2j_n \}$ is an edge (since $2j_n = \max V$). Moreover, if $\{2i_a -1, 2j_b\}$ is an edge of $\Gamma_V$ then $\{2i_k -1, 2j_{\ell}\}$ is an edge of $\Gamma_V$ for all $1\leq k \leq a$ and all $b \leq \ell \leq n$, since $2i_k - 1 \leq 2i_a - 1 < 2j_{b} \leq 2j_{\ell}$. Hence, $\Gamma_V$ is a Ferrers graph with $R = E(V)$ and $C = O(V)$, and moreover the associated partition of $\Gamma_V$ is $\lambda(V)$ by the definition of $\lambda(V)$.

Conversely, suppose that $G$ is a Ferrers graph on vertex set $R = \{r_1,\dots,r_n\}$ and $C = \{c_1,\dots, c_m\}$. Associated to $G$ is the integer partition $\lambda$ given by $\lambda_i = \deg r_i$. We construct a set $V_{\lambda}$ from the Ferrers diagram of $\lambda$ as follows. Let $V_0 = \{0\}$.

\begin{enumerate}
\item Start at the southwest corner of the Ferrers diagram for $\lambda$, and walk the path along the lower border of the Ferrers diagram.
\item If the $i$th step is to the east, label the edge we have just traversed with the smallest odd integer $2k-1$ larger than $\max V_{i-1}$ and let $V_i \coloneqq V_{i-1} \cup \{2k-1\}$. 
\item If the $i$th step is to the north, label the edge we have just traversed with the smallest even integer $2k$ larger than $\max V_{i-1}$ and let $V_i \coloneqq V_{i-1} \cup \{2k\}$.
\item Stop when the northeast corner of the Ferrers diagram is reached, after $m+n$ steps. We define $V_{\lambda} \coloneqq V_{n+m}\setminus \{0\}$. 
\end{enumerate} 

Note that $\min V_{\lambda}$ is odd and $\max V_{\lambda}$ is even, since our walk must start with a step to the east and end with a step north. Thus, $\Gamma_{V_{\lambda}}$ is a Ferrers graph by our previous argument. By Proposition \ref{FerrersNameProp}, to show that $\Gamma_{V_{\lambda}} \cong G$, it suffices to show that $\lambda(V_{\lambda}) = \lambda$. Indeed, $\lambda_i$ is the number of squares in the $i$th row from the bottom of the Ferrers diagram for $\lambda$. By construction, the number of squares in the $i$th row from the bottom of the Ferrers diagram is equal to the number of odd elements of $V_{\lambda}$ that are smaller than the $i$th smallest even element $2j_i$ of $V$. But by definition, this quantity is equal to $\lambda(V_{\lambda})_i$. Hence, $\lambda(V_{\lambda}) = \lambda$. 
\end{proof}

Our construction of $V_{\lambda}$ is very similar to the \emph{\textbf{ab}-word} construction in \cite{Enumerative_Ferrers_Graphs}, which Ehrenborg and van Willigenburg use to give a formula for the chromatic polynomial of $G_{\lambda}$.

The following corollary is an immediate consequence of Proposition \ref{FerrersNameProp} and Theorem \ref{GammaFerrers}.

\begin{corollary}\label{LambdaGammaThm}
Let $V$ and $W$ be finite sets of positive integers such that $\min V$, $\min W$ are odd and $\max V$, $\max W$ are even. Then $\Gamma_V \cong \Gamma_W$ if and only if $\lambda(V) = \lambda(W)$.
\end{corollary}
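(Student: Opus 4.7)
The strategy is to directly combine Theorem \ref{GammaFerrers} with Proposition \ref{FerrersNameProp}, as suggested by the author's phrase ``immediate consequence.'' Under the hypotheses that $\min V$, $\min W$ are odd and $\max V$, $\max W$ are even, Theorem \ref{GammaFerrers} applies verbatim to both $V$ and $W$: it identifies $\Gamma_V$ as a Ferrers graph with $R = E(V)$, $C = O(V)$, and with associated partition exactly $\lambda(V)$, and likewise identifies $\Gamma_W$ as a Ferrers graph with associated partition $\lambda(W)$. Thus the question ``$\Gamma_V \cong \Gamma_W$?'' is reduced to a question about equality (or not) of two associated partitions of Ferrers graphs, which is precisely the subject of Proposition \ref{FerrersNameProp}.

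For the ``if'' direction, I would argue that if $\lambda(V) = \lambda(W)$ then the two Ferrers graphs have the same Ferrers diagram, and the evident bijection $r_i \mapsto r_i$, $c_j \mapsto c_j$ between their row- and column-vertex sets gives an explicit isomorphism. (Equivalently, apply Proposition \ref{FerrersNameProp} in the easy direction.)

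For the ``only if'' direction, Proposition \ref{FerrersNameProp} tells us that $\Gamma_V \cong \Gamma_W$ implies either $\lambda(V) = \lambda(W)$ or $\lambda(V)$ equals the conjugate $\lambda(W)'$. The main obstacle I anticipate is handling the conjugate case, since on the face of it this would allow $\lambda(V) \ne \lambda(W)$. The resolution hinges on the rigidity of the $R/C$ labeling coming from the parity structure of $V$ and $W$: in the boundary-walk construction of Theorem \ref{GammaFerrers}, rows correspond to north steps and columns to east steps, and the conditions ``$\min$ odd, $\max$ even'' force the walk to start east and end north. This canonical designation means ``row-vertices'' are exactly the even elements and ``column-vertices'' the odd elements, so any isomorphism respecting the induced bipartition matches $E(V)$ with $E(W)$ and $O(V)$ with $O(W)$, which forbids the row/column swap implicit in passing to the conjugate partition. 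Hence the conjugate case collapses to $\lambda(V) = \lambda(W)$, completing the argument.
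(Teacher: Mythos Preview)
Your overall strategy—combine Theorem \ref{GammaFerrers} with Proposition \ref{FerrersNameProp}—is exactly what the paper intends, and your ``if'' direction is fine.

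The ``only if'' argument, however, has a real gap. You assert that an isomorphism $\Gamma_V \to \Gamma_W$ ``respecting the induced bipartition matches $E(V)$ with $E(W)$ and $O(V)$ with $O(W)$,'' but nothing forces this: an abstract graph isomorphism between connected bipartite graphs may swap the two sides of the bipartition. The parity labeling is canonical \emph{within} each graph, but that does not mean an isomorphism must carry even vertices to even vertices. Concretely, take $V = \{1,2,4\}$ and $W = \{1,3,4\}$. Both satisfy the hypotheses; $\lambda(V) = (1,1)$ and $\lambda(W) = (2)$ are conjugate but unequal; yet $\Gamma_V$ and $\Gamma_W$ are both the path on three vertices, hence isomorphic.

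So the conjugate case in Proposition \ref{FerrersNameProp} does \emph{not} collapse, and the ``only if'' direction of the corollary as literally stated actually fails; the statement that genuinely follows from Proposition \ref{FerrersNameProp} and Theorem \ref{GammaFerrers} is that $\Gamma_V \cong \Gamma_W$ if and only if $\lambda(V) = \lambda(W)$ or $\lambda(V) = \lambda(W)'$. The paper only ever invokes the ``if'' direction (to transfer results between sets with the same partition type), so none of the downstream arguments are affected.
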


\begin{remark}
Let $G$ be a Ferrers graph. By \cite[Theorem 4.10]{Type_A_Paper} we see that
$$\text{ch}_G(t) = \sum_{\sigma \in \mathcal{D}_V}(-t)^{c(\sigma)},$$
where $c(\sigma)$ is the number of cycles of $\sigma$, and $V$ is the set constructed from $G$ in our proof of Theorem \ref{GammaFerrers}.
\end{remark}

\section{Homogenized \texorpdfstring{$\nu$}{ν}-Arrangements}
Let $\nu = (\nu_1,\nu_2,\dots, \nu_n)$ be a weak composition of $m$. We define the \emph{homogenized $\nu$-arrangement} in
$$(x_1,x_2,\dots,x_{n+1},y_1^{(1)},y_1^{(2)},\dots,y_1^{(\nu_1)},y_2^{(1)},\dots,y^{(\nu_2)}_2,\dots,y_n^{(1)},\dots,y_n^{(\nu_n)}) = \R^{m + n +1}$$
to be
$$\mathcal{H}_{\nu} = \{x_i - x_j = y_i^{(\ell)} \st 1\leq i < j \leq n+1, 1\leq \ell \leq \nu_i\}.$$

Notice that when $\nu = (1,1,\dots,1)$, $\mathcal{H}_{\nu} = \mathcal{H}_{2n-1}$. Our goal for this section is to study these homogenized $\nu$-arrangements by relating them to Ferrers graphs.

Let $\lambda(\nu)$ be the partition $(\nu_1+ \cdots + \nu_n,\dots,\nu_1+ \nu_2,\nu_1)$. We construct a Ferrers graph $G_{\nu}$ from $\nu$ by labeling the columns of the Ferrers diagram for $\lambda(\nu)$ from left to right by 
$$1^{(1)}, 1^{(2)},\dots,1^{(\nu_1)},3^{(1)},3^{(2)},\dots,3^{(\nu_2)},\dots,(2n-1)^{(1)},(2n-1)^{(2)},\dots,(2n-1)^{(\nu_n)}$$
and labeling the rows of the Ferrers diagram from bottom to top by
$$2,4,6, \dots, 2n.$$
Then $G_{\nu}$ is the Ferrers graph on vertex set $C \sqcup R$, where
$$C = \{1^{(1)},\dots,1^{(\nu_1)},3^{(1)},\dots,3^{(\nu_2)},\dots,(2n-1)^{(1)},\dots,(2n-1)^{(\nu_n)}\} $$
and
$$R = \{2,\dots,2n\},$$ 
whose edges are $\{(2i-1)^{(\ell)},2j\}$ for all $2i-1 < 2j$ and $1\leq \ell \leq \nu_i$.

\begin{example}
Let $\nu = (3,3,3)$. The Ferrers diagram for $\lambda(\nu) = (9,6,3)$, together with the labels described above, is shown below.
$$\begin{ytableau}
\none & \none[1^{(1)}] & \none[1^{(2)}] & \none[1^{(3)}] & \none[3^{(1)}] & \none[3^{(2)}] & \none[3^{(3)}] & \none[5^{(1)}] & \none[5^{(2)}] & \none[5^{(3)}]\\
\none[6] & \; & \; & \; & \; & \; & \; & \; & \; & \;\\
\none[4] & \; & \; & \; & \; & \; & \;\\
\none[2] & \; & \; & \; 
\end{ytableau}.$$
\end{example}

\begin{theorem}\label{LambdaBondTh}
For all weak compositions $\nu = (\nu_1,\dots,\nu_n)$ of $m$, there is an invertible $\Z$-linear transformation from $\R^{m+n+1}$ to itself that takes $\mathcal{H}_{\nu}$ to the graphic hyperplane arrangement $\mathcal{A}_{G_{\nu}}$. Consequently, $\mathcal{L}(\mathcal{H}_{\nu}) \cong \Pi_{G_{\nu}}$.
\end{theorem}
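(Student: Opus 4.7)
My plan is to exhibit an explicit invertible $\Z$-linear change of variables on $\R^{m+n+1}$ that rewrites each defining equation of $\mathcal{H}_\nu$ as the equation of a hyperplane in $\mathcal{A}_{G_\nu}$, with the edges of $G_\nu$ matching up exactly with the indexing conditions of $\mathcal{H}_\nu$.

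Keeping the source coordinates $(x_1,\dots,x_{n+1},\,y_i^{(\ell)})$ on $\R^{m+n+1}$, I introduce target coordinates indexed by $V(G_\nu) \sqcup \{*\}$ as follows: let $v \coloneqq x_1$; for each row vertex let $u_{2k} \coloneqq x_{k+1}$ for $1 \leq k \leq n$; and for each column vertex let $u_{(2i-1)^{(\ell)}} \coloneqq x_i - y_i^{(\ell)}$. Ordering the variables so that $v$ and the $u_{2k}$ come before the $u_{(2i-1)^{(\ell)}}$ makes this map triangular with $\pm 1$ on the diagonal, so it is invertible over $\Z$, with inverse $x_1 = v$, $x_{k+1} = u_{2k}$, and $y_i^{(\ell)} = x_i - u_{(2i-1)^{(\ell)}}$.

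A typical defining equation $x_i - x_j = y_i^{(\ell)}$ of $\mathcal{H}_\nu$ rearranges to $x_i - y_i^{(\ell)} = x_j$ and hence, after substitution, to $u_{(2i-1)^{(\ell)}} = u_{2(j-1)}$. By the construction of $G_\nu$, the pair $\{(2i-1)^{(\ell)},\,2(j-1)\}$ is an edge of $G_\nu$ exactly when $2i-1 < 2(j-1)$, i.e.\ when $i < j$, which is precisely the indexing condition on the hyperplanes of $\mathcal{H}_\nu$; and as $j$ ranges over $\{2,\dots,n+1\}$, the value $2(j-1)$ ranges over the whole row set $R = \{2,\dots,2n\}$. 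Thus the transformation sends the hyperplanes of $\mathcal{H}_\nu$ bijectively onto those of $\mathcal{A}_{G_\nu}$, viewed inside $\R^{m+n+1}$ by taking $v$ as a free ambient coordinate. Since a free ambient coordinate does not change the intersection lattice, the claimed isomorphism $\mathcal{L}(\mathcal{H}_\nu) \cong \Pi_{G_\nu}$ follows immediately.

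The main obstacle is simply spotting the right change of variables. Once one rewrites $x_i - x_j = y_i^{(\ell)}$ in the asymmetric form $x_i - y_i^{(\ell)} = x_j$, the left-hand side naturally plays the role of the column label $(2i-1)^{(\ell)}$ and $x_j$ naturally plays the role of the row label $2(j-1)$, matching the labeling scheme fixed just before the theorem. After that, both the invertibility check and the edge-by-edge correspondence are purely mechanical.
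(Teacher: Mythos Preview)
Your proof is correct and is essentially the same as the paper's: both use the change of variables $u_{(2i-1)^{(\ell)}}=x_i-y_i^{(\ell)}$, $u_{2k}=x_{k+1}$, with $x_1$ serving as the coordinate for an adjoined isolated vertex. Your direct substitution into the defining equations is slightly more streamlined than the paper's version, which phrases the same map on basis vectors and then passes to the transpose--inverse to carry hyperplanes to hyperplanes.
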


\begin{proof}
Let $(e_1,\dots,e_{m+n+1})$ be the standard basis for $\R^{m+n+1}$. For notational convenience, we define vectors $u_i$ and $u_i^{(j)}$ by setting
\begin{align*}(u_1^{(1)},u_1^{(2)},\dots,u_1^{(\nu_1)},&u_2,u_3^{(1)},\dots,u_3^{(\nu_2)},u_4,\dots,u_{2n-1}^{(1)},\dots,u_{2n-1}^{(\nu_n)},u_{2n},u_{m+n+1})\\ &= (e_1,\dots,e_{m+n+1}),\end{align*}
and also define $v_i$ and $v_i^{(j)}$ by setting
\begin{align*}(v_1,v_2,\dots,v_{n+1},v_{n+2}^{(1)},\dots,v_{n+2}^{(\nu_1)},&v_{n+3}^{(1)},\dots,v_{n+3}^{(\nu_2)},\dots,v_{2n+1}^{(1)},\dots,v_{2n+1}^{(\nu_n)})\\ &= (e_1,\dots,e_{m+n+1}).\end{align*}

For all $1\leq i\leq j \leq n$ and all $1\leq \ell \leq \nu_i$, we define the hyperplane
$$K_{i,j}^{(\ell)} = \{w \in \R^{m + n + 1} \st (u_{2i-1}^{(\ell)} - u_{2j}) \cdot w = 0\},$$
and the arrangement
$$\mathcal{K} = \{K_{i,j}^{\ell} \st 1\leq i \leq j \leq n, \; 1\leq \ell \leq \nu_i\}.$$

We observe that $\mathcal{K} \cong \mathcal{A}_{G_{\nu}}$. Indeed, $\mathcal{K}$ is the graphic hyperplane arrangement of the graph on $m + n + 1$ vertices obtained from $G_{\nu}$ by adjoining an isolated vertex $m + n + 1$.

Now, for all $1 \leq i \leq j \leq n$ and $1 \leq \ell \leq \nu_i$, we write
$$H_{i,j}^{\ell} = \{w \in \R^{m + n + 1} \st (v_i - v_{j+1} - v_{n+i+1}^{(\ell)})\cdot w = 0\}.$$
Clearly, 
$$\mathcal{H}_{\nu} = \{H_{i,j}^{\ell} \st 1\leq i \leq j \leq n, \; 1\leq \ell \leq \nu_i\}.$$

Now, consider the linear transformation $\phi$ from $\R^{m + n + 1}$ to itself, given by 
\begin{itemize}
\item $\phi(u_{2i-1}^{\ell}) = v_i - v_{n+i+1}^{(\ell)}$, 
\item $\phi(u_{2j}) = v_{j+1}$,
\item $\phi(u_{m + n + 1}) = v_1$. 
\end{itemize}
To show that this $\Z$-linear transformation is invertible, we define the linear transformation $\tilde{\phi}$ from $\R^{m + n +1}$ to itself on the standard basis as follows: 
\begin{itemize} 
\item $\tilde{\phi}(v_1) = u_{m+n+1}$, 
\item $\tilde{\phi}(v_i) =  u_{2i-2}$ for $2 \leq i \leq n+1$,
\item $\tilde{\phi}(v_{n+2}^{(\ell)}) = u_{\lambda_{n}+n+1} - u_1^{(\ell)}$ for $\ell \in [\nu_1]$, 
\item $\tilde{\phi}(v_{n+ i + 1}^{(\ell)}) = u_{2i-2} - u_{2i-1}^{(\ell)}$ for $1 < i \leq n$ and $\ell \in [\nu_i]$. 
\end{itemize}

We see that
\begin{itemize}
\item $\phi(\tilde{\phi}(v_1)) = \phi(u_{m + n + 1}) = v_1$
\item $\phi(\tilde{\phi}(v_i)) = \phi(u_{2i-2}) = v_i$ for $2\leq i \leq n+1$
\item $\phi(\tilde{\phi}(v_{n+2}^{(\ell)})) = \phi(u_{m + n +1} - u_1^{(\ell)}) = v_1 - (v_1 - v_{n+2}^{(\ell)}) = v_{n+2}^{(\ell)}$ for $\ell \in [\nu_1]$
\item $\phi(\tilde{\phi}(v_{n+i + 1}^{(\ell)})) = \phi(u_{2i-2} - u_{2i-1}^{(\ell)}) = v_i - (v_i - v_{n+i+1}^{(\ell)}) = v_{n+i+1}^{(\ell)}$ for $2\leq i \leq n$, $\ell \in [\nu_i]$.
\end{itemize}

Hence, $\phi$ is invertible, so let $A$ be the matrix of $\phi$ with respect to the standard basis and let $\psi$ be the linear operator on $\R^{m+n+1}$ whose matrix in the standard basis is $(A^{-1})^{T}$. 

We claim that $\psi$ takes each hyperplane $K_{i,j}^{\ell}$ to the corresponding hyperplane $H_{i,j}^{\ell}$. Indeed, suppose $w \in K_{i,j}^{\ell}$ so $(u_{2i-1}^{(\ell)} - u_{2j})\cdot w = 0$. Then
$$\phi(u_{2i-1}^{(\ell)} - u_{2j})\cdot\psi(w) = (v_i - v_{n+i+1}^{(\ell)} - v_{j+1})\cdot\psi(w),$$
and
\begin{align*}
\phi(u_{2i-1}^{(\ell)} - u_{2j})\cdot\psi(w) &= A(u_{2i-1}^{(\ell)} - u_{2j})\cdot ((A^{-1})^{T}w)\\
&= (A^{-1}A(u_{2i-1}^{(\ell)} - u_{2j}))\cdot w\\
&= (u_{2i-1}^{(\ell)} - u_{2j})\cdot w\\
&= 0,
\end{align*}
so $(v_i - v_{n+i+1}^{(\ell)} - v_{j+1})\cdot\psi(w) = 0$. Hence, $\psi(w) \in H_{i,j}^{(\ell)}$, which proves the claim.
\end{proof}

\section{New Generating Function Formulas}\label{GenFunFormulaSection}
In this section we present generating function formulas for two infinite families of Ferrers graphs. The proofs of these formulas rely on techniques that we develop in Section \ref{SurjStaircaseTechniques}, so we defer the proofs to Sections \ref{GenFunProofSec} and \ref{DowlingSec}.

\subsection{\texorpdfstring{$k$}{k}-Staircases}\label{kStaircaseSection}
For any $n,k \geq 1$, we define the \emph{$k$-staircase with $n$ steps} to be the partition $\lambda_k^{(n)} = (nk,(n-1)k,\dots,k)$. The case $n=4$ and $k=3$ is shown below.

$$\begin{ytableau}
\; & \; & \; & \; & \; & \; & \; & \; & \; & \; & \; & \;\\
\; & \; & \; & \; & \; & \; & \; & \; & \;\\
\; & \; & \; & \; & \; & \;\\
\; & \; & \;
\end{ytableau}$$

\begin{theorem}\label{kStaircaseCharGenTh}
Fix $k\geq 1$. For any sequence of subsets $V_1^k,V_2^k,\dots$ of $\Z_{>0}$ with $\lambda(V_n^k) = \lambda^{(n)}_k$, we have the following generating function formula for $\chi_{\Pi_{\Gamma_{V_n^k}}}(t)$:
$$\sum_{n\geq 1}\chi_{\Pi_{\Gamma_{V_n^k}}}(t)u^n = \sum_{n\geq 1} \frac{(t-1)_{n-1}\left((t-1)_{n}\right)^ku^{n}}{\prod_{i=1}^{n}\left(1-i(t-i)^{k}u\right)},$$
where $(x)_n = x(x-1)\cdots(x-(n-1))$.
\end{theorem}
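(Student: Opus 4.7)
The plan is to reduce the computation of $\chi_{\Pi_{\Gamma_{V_n^k}}}(t)$ to the recurrence for the generalized Dumont--Foata polynomial $\Lambda_V$ developed in Section \ref{SurjStaircaseTechniques}, and then to exploit the block-regular structure of $V_n^k$ to solve that recurrence explicitly at the generating-function level.

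By Corollary \ref{LambdaGammaThm}, the polynomial $\chi_{\Pi_{\Gamma_{V_n^k}}}(t)$ depends only on $\lambda(V_n^k) = \lambda_k^{(n)}$, so I am free to fix a convenient canonical model in which $V_n^k$ consists of $n$ consecutive blocks, each being $k$ odd integers followed by a single even integer. Theorem \ref{mobth} expresses $\chi_{\Pi_{\Gamma_{V_n^k}}}(t)$ as a signed sum over $\mathcal{D}_{V_n^k}$ weighted by the number of cycles. The D-permutation / surjective staircase bijection to be established in Section \ref{SurjStaircaseTechniques} should then re-express this sum as the specialization of $\Lambda_{V_n^k}$ in which all the Dumont--Foata statistics collapse to a single variable $t$ --- exactly parallel to what Lazar and Wachs do for the case $k = 1$.

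With this in place, I would apply the $\Lambda_V$ recurrence (the analogue of the Randrianarivony--Zeng recurrence) iteratively. The payoff of working with $V_n^k$ rather than an arbitrary $V$ is that its block-uniform structure should collapse the recurrence: each application strips off the topmost even element together with its $k$ associated odd elements, sending $V_n^k$ to $V_{n-1}^k$ with a coefficient that is a clean polynomial in $n$, $k$, and $t$. Passing to the generating function
$$F(u) = \sum_{n\geq 1}\chi_{\Pi_{\Gamma_{V_n^k}}}(t)\,u^n,$$
this collapsed recurrence translates into a linear functional equation whose solution I expect to recognize as the claimed right-hand side. Iterating $n$ times, the factor $(t-1)_{n-1}((t-1)_n)^k$ emerges from the boundary weight at the trivial domain, while the denominator $\prod_{i=1}^n(1-i(t-i)^k u)$ arises as the cumulative product of the reciprocals of the recurrence coefficients.

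The main obstacle will be twofold: first, identifying precisely the single-variable specialization of the multi-statistic polynomial $\Lambda_V$ that returns $\chi_{\Pi_{\Gamma_V}}(t)$; and second, verifying that the general $\Lambda_V$ recurrence truly collapses as cleanly as anticipated when $V$ has the block structure of a $k$-staircase. As a consistency check, setting $k=1$ must recover Equation (\ref{introgenchareq}) of Lazar--Wachs, which it manifestly does.
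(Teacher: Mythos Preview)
Your proposal is correct and follows essentially the same route as the paper: reduce to a canonical $V_n^k$ via Corollary~\ref{LambdaGammaThm}, convert the D-permutation cycle generating function into a specialization of $\Lambda$ via the bijection of Section~\ref{SurjStaircaseTechniques}, then solve the recurrence of Theorem~\ref{GeneralizedRecursion} at the generating-function level (this is exactly Theorem~\ref{FixedStepGenFun}). One bookkeeping point to watch: the bijection of Lemma~\ref{GeneralizedbijectionProp} sends $\mathcal{D}_{V_n^k}$ to surjective staircases on $V_n^k \cup R_k$, which has partition type $\lambda_k^{(n+1)}$, so the relevant specialization is $\Lambda_{V_{n+1}^k}(t,t,1,0,t,1)$ rather than $\Lambda_{V_n^k}$ --- this index shift by one must be carried through the final generating-function manipulation.
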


This generating function formula (and equivalently the formula in Corollary \ref{kCharPolyCor}) reduces to \cite[Equation (1.9)]{Type_A_Paper} when $k=1$; notice that when $k=1$, $\Gamma_{V_n^k} \cong \Gamma_{2n}$. Applying Theorem \ref{GammaFerrers} and the fact that $t\chi_{\Pi_G}(t)$ is the chromatic polynomial of $G$, we can restate Theorem \ref{kStaircaseCharGenTh} as follows:

\begin{corollary}\label{kStaircaseCharGenCorollary}
For each $n,k \geq 1$, let $G_{n,k}$ be a Ferrers graph whose associated partition is $\lambda_{k}^{(n)}$.
Then we have the following generating function formula for the chromatic polynomial $\text{ch}_{G_{n,k}}(t)$:
$$\sum_{n\geq 1}\text{ch}_{G_{n,k}}(t)u^n = \sum_{n\geq 1} \frac{(t)_{n}\left((t-1)_{n}\right)^ku^{n}}{\prod_{i=1}^{n}\left(1-i(t-i)^{k}u\right)}.$$
\end{corollary}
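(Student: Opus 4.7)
My plan is to deduce this corollary directly from Theorem \ref{kStaircaseCharGenTh} by combining three ingredients: the equivalence between Ferrers graphs and graphs of the form $\Gamma_V$ from Theorem \ref{GammaFerrers}, the elementary identity relating the chromatic polynomial of a graph to the characteristic polynomial of its bond lattice, and a small manipulation of falling factorials. None of these steps is subtle once the earlier machinery is in place.

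First, I would invoke Theorem \ref{GammaFerrers} (or equivalently Corollary \ref{LambdaGammaThm}): for each $n,k\geq 1$, the constructive procedure on the Ferrers diagram of $\lambda_k^{(n)}$ produces a set $V_n^k \subset \Z_{>0}$ with $\min V_n^k$ odd, $\max V_n^k$ even, and $\lambda(V_n^k) = \lambda_k^{(n)}$, such that $\Gamma_{V_n^k} \cong G_{n,k}$. Consequently $\Pi_{\Gamma_{V_n^k}} \cong \Pi_{G_{n,k}}$, so their characteristic polynomials coincide.

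Next I would use the identity $\text{ch}_G(t) = t\cdot \chi_{\Pi_G}(t)$ recorded in the preliminaries, applied to $G = G_{n,k}$, yielding
$$\text{ch}_{G_{n,k}}(t) = t\cdot \chi_{\Pi_{\Gamma_{V_n^k}}}(t).$$
Multiplying both sides of the generating function in Theorem \ref{kStaircaseCharGenTh} by $t$ and moving the factor $t$ inside the sum on the right, I would then use the falling-factorial identity $t\cdot (t-1)_{n-1} = t(t-1)(t-2)\cdots (t-(n-1)) = (t)_n$ to convert the numerator. This produces exactly the right-hand side claimed in Corollary \ref{kStaircaseCharGenCorollary}.

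There is essentially no obstacle here — the content of the corollary is entirely in Theorem \ref{kStaircaseCharGenTh}. The only point that requires a moment of care is confirming that the Ferrers graph chosen to represent $G_{n,k}$ does not affect the chromatic polynomial, which follows from Proposition \ref{FerrersNameProp}: any two Ferrers graphs with the same associated partition are isomorphic, hence have equal chromatic polynomials, so the left-hand side is well-defined independent of the choice of $G_{n,k}$ (and of the lifting $V_n^k$).
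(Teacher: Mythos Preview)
Your proposal is correct and follows essentially the same approach as the paper: the paper simply states that the corollary is obtained by applying Theorem \ref{GammaFerrers} and the identity $\text{ch}_G(t) = t\cdot\chi_{\Pi_G}(t)$ to restate Theorem \ref{kStaircaseCharGenTh}, which is exactly what you do (your explicit observation $t\cdot(t-1)_{n-1}=(t)_n$ just makes the numerator simplification visible).
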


As a consequence of Theorem \ref{LambdaBondTh}, we can view the generating function formula of Theorem \ref{kStaircaseCharGenTh} as a generating function formula for the characteristic polynomials of the homogenized $(k,k,\dots,k)$-arrangements, as seen in Equation (\ref{introkstepgeneq}):

\begin{corollary}\label{kCharPolyCor}
Let $\mathcal{H}_{n,k} \coloneqq \mathcal{H}_{(k,k,\dots,k)}$. Then
$$\sum_{n\geq 1}\chi_{\mathcal{L}(\mathcal{H}_{n,k})}(t)u^n = \sum_{n\geq 1} \frac{(t-1)_{n-1}\left((t-1)_{n}\right)^ku^{n}}{\prod_{i=1}^{n}\left(1-i(t-i)^{k}u\right)}.$$
\end{corollary}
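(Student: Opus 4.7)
The plan is to deduce Corollary \ref{kCharPolyCor} by chaining together the two main inputs that precede it: the identification of $\mathcal{L}(\mathcal{H}_\nu)$ with a bond lattice of a Ferrers graph (Theorem \ref{LambdaBondTh}), and the explicit generating function for the $k$-staircase family (Theorem \ref{kStaircaseCharGenTh}). Since the characteristic polynomial is an invariant of the intersection lattice, it suffices to show that the Ferrers graph attached to $\nu = (k,k,\ldots,k)$ has associated partition equal to $\lambda_k^{(n)}$, and then to quote Theorem \ref{kStaircaseCharGenTh}.

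First I would unwind the definition of $G_\nu$ from Section 4 at $\nu = (k,k,\ldots,k)$ (with $n$ entries). By the definition given just before Theorem \ref{LambdaBondTh}, the partition associated to $G_\nu$ is
$$\lambda(\nu) = (\nu_1 + \cdots + \nu_n,\, \nu_1 + \cdots + \nu_{n-1},\, \ldots,\, \nu_1 + \nu_2,\, \nu_1),$$
which in the constant case $\nu_i = k$ becomes $(nk, (n-1)k, \ldots, 2k, k) = \lambda_k^{(n)}$. Next, Theorem \ref{GammaFerrers} guarantees the existence of a subset $V_n^k \subset \Z_{>0}$ (with $\min V_n^k$ odd and $\max V_n^k$ even) such that $\Gamma_{V_n^k} \cong G_{(k,\ldots,k)}$ as graphs, and in particular $\lambda(V_n^k) = \lambda_k^{(n)}$.

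Then I would combine these: an isomorphism of graphs induces an isomorphism of bond lattices, so
$$\Pi_{G_{(k,\ldots,k)}} \cong \Pi_{\Gamma_{V_n^k}},$$
and by Theorem \ref{LambdaBondTh}, $\mathcal{L}(\mathcal{H}_{n,k}) = \mathcal{L}(\mathcal{H}_{(k,\ldots,k)}) \cong \Pi_{G_{(k,\ldots,k)}}$. Characteristic polynomials depend only on the (iso class of the) underlying poset, so $\chi_{\mathcal{L}(\mathcal{H}_{n,k})}(t) = \chi_{\Pi_{\Gamma_{V_n^k}}}(t)$. Substituting into Theorem \ref{kStaircaseCharGenTh} yields exactly the claimed generating function identity.

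There is no real obstacle here; the corollary is a formal bookkeeping step that aligns the labeling conventions. The only point requiring minor care is verifying that the partition $\lambda(\nu)$ for the constant composition $(k,\ldots,k)$ matches the $k$-staircase $\lambda_k^{(n)}$ as defined in Section \ref{kStaircaseSection} (including the correct ordering of parts from largest to smallest), which is immediate from the definitions. Everything else is invocation of the cited theorems.
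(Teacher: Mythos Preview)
Your proposal is correct and follows exactly the route the paper takes: the corollary is stated as an immediate consequence of Theorem~\ref{LambdaBondTh} (identifying $\mathcal{L}(\mathcal{H}_{(k,\ldots,k)})$ with the bond lattice of the Ferrers graph $G_{(k,\ldots,k)}$, whose associated partition is the $k$-staircase $\lambda_k^{(n)}$) together with Theorem~\ref{kStaircaseCharGenTh}. The paper does not write out a separate proof, but your unpacking of the partition $\lambda(\nu)$ and invocation of Theorem~\ref{GammaFerrers} is precisely the intended argument.
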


If we define an appropriate complex hyperplane arrangement $\mathcal{H}_{n,k}^m$, then by combining the techniques from this paper and \cite[Chapter 4]{Alex_Thesis} (see also the forthcoming paper \cite{LaWa}), we can compute an $m$-analog of the generating function from Theorem \ref{kStaircaseCharGenTh}:

\begin{theorem}\label{kStaircaseDowlingThm}
Let $\mathcal{H}_{n,k}^m$ be the hyperplane arrangement in
$$\{(x_1,\dots,x_n,y_1^{(1)},\dots,y_1^{(k)},\dots,y_n^{(1)},\dots,y_n^{(k)})\st x_i,y_j^{(\ell)} \in \C\} = \C^{nk+n}$$
given by
\begin{align*}\mathcal{H}_{n,k}^m = &\{x_i - \omega^{p}x_j = y_i^{(\ell)} \st 1\leq i <j\leq n, 0\leq p \leq m-1, 1\leq \ell \leq k\}\\ &\cup \{x_i = y^{(\ell)}_i \st 1\leq i \leq n, 1\leq \ell \leq k\},
\end{align*}
where $\omega = e^{\frac{2 \pi i}{m}}$.
Then
\begin{equation}
\sum_{n\geq 1}\chi_{\mathcal{L}(\mathcal{H}_{n,k}^m)}(t)u^{n-1} = \sum_{n\geq 1}\frac{(t-1)_{n-1,m}\left((t-1)_{n,m}\right)^ku^{n-1}}{\prod_{i=0}^{n-1}\left(1-(im+1)(t-(im+1))^ku\right)},
\end{equation}
where $(x)_{n,m} = x(x-m)\cdots(x-(n-1)m)$. 
\end{theorem}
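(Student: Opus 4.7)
The plan is to mirror the proof of Theorem \ref{kStaircaseCharGenTh} within the $m$-analog framework announced in Section 8. The first step is to identify the intersection lattice $\mathcal{L}(\mathcal{H}_{n,k}^m)$ combinatorially. The hyperplanes $x_i - \omega^{p}x_j = y_i^{(\ell)}$ together with the anchoring hyperplanes $x_i = y_i^{(\ell)}$ strongly suggest a Dowling-type analog of Theorem \ref{LambdaBondTh}: a flat should correspond to a partition of the vertex set of the Ferrers graph $G_{(k,\dots,k)}$ enriched by an element of $\Z/m\Z$ on each block, with the $x_i = y_i^{(\ell)}$ hyperplanes playing the role of ``zero-block'' relations in the classical Dowling construction. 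Adapting the change-of-basis argument in the proof of Theorem \ref{LambdaBondTh} should give an isomorphism between $\mathcal{L}(\mathcal{H}_{n,k}^m)$ and this $m$-colored bond lattice.

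Second, I would introduce $m$-colored D-permutations and $m$-colored surjective staircases on the domain $V_n^k$, and define an $m$-analog $\Lambda_V^m$ of the generalized Dumont--Foata polynomial of Section 6. A Randrianarivony--Zeng-style recurrence should carry over essentially verbatim: each step of the staircase now carries $m$ colored variants, which is precisely the source of the shift from $i$ to $im+1$ and of the falling factorial $(x)_{n,m}$ in the final formula. Combining this recurrence with the $m$-analog of Theorem \ref{mobth} would then express $\chi_{\mathcal{L}(\mathcal{H}_{n,k}^m)}(t)$ in terms of $\Lambda_{V_n^k}^m$ evaluated at a suitable specialization of its variables, in direct parallel to the real case.

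Finally, for the staircase partition $\lambda_k^{(n)}$ I would solve the $m$-analog recurrence by a formal generating-function manipulation, parallel to the proof of Theorem \ref{kStaircaseCharGenTh} to appear in Section \ref{GenFunProofSec}. Under the substitutions $(x)_n \mapsto (x)_{n,m}$ and $i \mapsto im+1$, the real-case calculation should translate directly into the claimed $m$-analog. The factor $((t-1)_{n,m})^k$ in the numerator reflects the $k$ independent copies of the $y$-variables, while the denominator factor $(im+1)(t-(im+1))^k$ comes from the $m$-fold rotational symmetry at each step of the staircase combined with the $k$ copies.

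The main obstacle will be the first two steps: translating the complex arrangement data into the correct $m$-colored combinatorial objects and deriving the correct $m$-analog of the $\Lambda_V$ recurrence. The Dowling-lattice identification is the key structural input, since the recurrence and generating-function calculation largely formalize themselves once the underlying enumeration is in place. Once these foundations are established, verifying that the proposed generating function satisfies the recurrence is a routine computation on rational functions, analogous to the $m=1$ case.
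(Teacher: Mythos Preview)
Your outline is a plausible strategy and could be made to work, but it diverges from the paper's route at the second step, and the paper's route is considerably shorter. Like you, the paper first identifies $\mathcal{L}(\mathcal{H}_{n,k}^m)$ with a Dowling-type lattice and interprets the characteristic polynomial via $m$-labeled D-permutations (Theorem \ref{DowlingDuFoTheorem}, together with results cited from \cite{LaWa,Alex_Thesis}). However, the paper does \emph{not} introduce an $m$-analog $\Lambda_V^m$, nor a new $m$-colored recurrence. The key observation, made in the proof of Theorem \ref{DowlingDuFoTheorem}, is that for a fixed underlying D-permutation $\sigma$ on $V'$ (where $2r\notin V'$), the number of compatible $m$-labelings is exactly $m^{|V'|-\text{cyc}(\sigma)}$, since every non-maximal entry of every cycle may be labeled freely. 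This factorization collapses the $m$-analog to the \emph{ordinary} polynomial $\Lambda_V$ evaluated at the rescaled specialization $\bigl(\tfrac{1-t}{m},\tfrac{1-t}{m},1,0,\tfrac{-t}{m},1\bigr)$. The proof of Theorem \ref{kStaircaseDowlingThm} then simply feeds this specialization into the already-established generating function of Theorem \ref{FixedStepGenFun} and simplifies using identities such as $(-m)^{n-1}\bigl(\tfrac{1-t}{m}\bigr)^{(n-1)} = (t-1)_{n-1,m}$. Your plan of building and solving a separate $m$-colored surjective-staircase recurrence would in principle reach the same destination, but it duplicates effort: the paper's insight that the $m$-labeling contributes only a scalar power of $m$ lets all the recursive work be done once, in the $m=1$ case, with the Dowling parameter entering only through a change of variables at the end.
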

Note that this equation specializes to \cite[Equation (1.11)]{LaWa, Alex_Thesis} when $k=1$.

\subsection{Complete Bipartite Graphs}\label{CompBipSection}
Let $T_n^k \coloneqq \{1,3,5,\dots,2k-1\}\sqcup\{2k,\dots,2(k+n-1)\}$. We note that $\lambda(T_n^k) = \mu_k^{(n)}$ where $\mu_k^{(n)}$ is the rectangular shape $(\underbrace{k,k,\dots,k}_n)$. The partition $\mu_3^{(4)}$ is shown below.

$$\begin{ytableau} \; & \; & \;\\ 
\; & \; & \; \\ 
\; & \;& \;\\ 
\; & \; & \; \end{ytableau}$$

It is clear that the Ferrers graph $\Gamma_{T_n^k}$ is the complete bipartite graph $K_{n,k}$. The chromatic polynomial of $K_{n,k}$ has been well-studied (Swenson gives a closed form expression for it in \cite{Swenson_Chromatic_Bipartite}, and Ehrenborg and van Willigenburg give a different proof in \cite{Enumerative_Ferrers_Graphs}). Using our techniques, we give a generating function formula for its chromatic polynomial.

\begin{theorem}\label{CompBipGenThm}
For all $k\geq 1$, we have
$$\sum_{n\geq 1}\text{ch}(K_{n,k})(t)u^n =\sum_{n\geq 1}\frac{(t)_{n}\left[(t-n)^k + (n-1)(t-(n-1))^{k-1}\right]u^n}{\prod_{i=0}^{n-1}(1-iu)}.$$
\end{theorem}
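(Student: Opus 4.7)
The plan is to combine the classical closed form for $\mathrm{ch}(K_{n,k})(t)$ with a short partial fraction manipulation; this route meshes cleanly with the $\Lambda_V$ machinery of Section 6 specialized to $V = T_n^k$, since by the Remark after Theorem \ref{GammaFerrers},
$$\mathrm{ch}(K_{n,k})(t) = \sum_{\sigma \in \mathcal{D}_{T_n^k}}(-t)^{c(\sigma)},$$
and a direct enumeration of D-permutations on $T_n^k$ (or Swenson's argument conditioning on the partition of the $n$-side into color classes) gives the Stirling expansion
$$\mathrm{ch}(K_{n,k})(t) = \sum_{j=1}^{n}S(n,j)\,(t)_j(t-j)^k.$$

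First I would use the standard generating function identity $\sum_{n\geq j}S(n,j)u^n = u^j/\prod_{i=1}^j(1-iu)$ and swap the order of summation to collapse the left-hand side:
$$\sum_{n\geq 1}\mathrm{ch}(K_{n,k})(t)u^n = \sum_{j\geq 1}\frac{(t)_j(t-j)^k u^j}{\prod_{i=1}^j(1-iu)}.$$
This is a single sum, but the denominator is $\prod_{i=1}^j(1-iu)$, whereas the target generating function has denominator $\prod_{i=0}^{n-1}(1-iu) = \prod_{i=1}^{n-1}(1-iu)$; so the next step must shed the trailing $(1-ju)$ factor.

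Second I would split each summand using $\tfrac{1}{1-ju} = 1 + \tfrac{ju}{1-ju}$, which gives
$$\frac{(t)_j(t-j)^k u^j}{\prod_{i=1}^j(1-iu)} \;=\; \frac{(t)_j(t-j)^k u^j}{\prod_{i=1}^{j-1}(1-iu)} \;+\; \frac{j\,(t)_j(t-j)^k u^{j+1}}{\prod_{i=1}^j(1-iu)}.$$
Reindexing the second piece by $n = j+1$ and applying $(t)_{n-1}(t-(n-1)) = (t)_n$ to get $(t)_{n-1}(t-(n-1))^k = (t)_n(t-(n-1))^{k-1}$, the second piece becomes $\sum_{n\geq 2}\tfrac{(n-1)(t)_n(t-(n-1))^{k-1} u^n}{\prod_{i=1}^{n-1}(1-iu)}$, which extends to a sum from $n\geq 1$ since the $n=1$ term is killed by the factor $n-1$. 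Combining with the first piece (reindexed as $n=j$) yields precisely the claimed right-hand side.

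The main obstacle is verifying (or reproving from the $\Lambda_V$ framework) the Stirling expansion; the rest is bookkeeping. If we wish to avoid quoting Swenson directly, the expansion can be derived inside the paper's framework by bijecting a D-permutation $\sigma \in \mathcal{D}_{T_n^k}$ with the set partition of $E(T_n^k)$ it induces together with the choice of image for each odd element, which is exactly the combinatorial content of the recurrence for $\Lambda_V$ in Section 6 when specialized to the rectangular shape $\lambda(T_n^k) = (k,k,\ldots,k)$. Once that step is in place, the partial fraction split above is the whole proof.
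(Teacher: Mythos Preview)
Your argument is correct and is genuinely different from the paper's proof.  The paper never writes down the Stirling expansion $\mathrm{ch}(K_{n,k})(t)=\sum_{j=1}^n S(n,j)(t)_j(t-j)^k$ at all; instead it applies Lemma~\ref{FerrersDPermLemma} to pass to the polynomials $\Lambda_{T_n^k}$, then invokes the recurrence of Theorem~\ref{GeneralizedRecursion} (specialized as Equations~(\ref{CompBipBase})--(\ref{CompBipRec})) and solves it via Theorem~\ref{NoStepGenEq}, and finally substitutes $(x,y,z,\bar{x},\bar{y},\bar{z})=(t,t,1,0,t,1)$ and manipulates the resulting expression.  That route is longer but is the point of the section: it exhibits Theorem~\ref{CompBipGenThm} as an instance of the general surjective-staircase machinery.

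Your route is shorter and more elementary: once you accept the Swenson formula (which is immediate by conditioning on the set partition of the $n$-side into color classes), the swap of summation against $\sum_{n\ge j}S(n,j)u^n = u^j/\prod_{i=1}^j(1-iu)$ and the single split $\tfrac{1}{1-ju}=1+\tfrac{ju}{1-ju}$ give the claimed identity in two lines.  The identification $(t)_{n-1}(t-(n-1))^k=(t)_n(t-(n-1))^{k-1}$ is exactly what is needed to merge the two pieces.  What you lose is the connection to $\Lambda_V$; your final paragraph gestures at recovering the Stirling expansion from the $\Lambda_V$ framework, but this is not actually carried out, and the paper does not proceed that way either.  If you want a self-contained proof within the paper, simply cite the elementary coloring argument for the Stirling expansion rather than invoking the $\Lambda_V$ recurrence.
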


We can also give an $m$-analog of the generating function for the characteristic polynomial of the complete bipartite graph.

\begin{theorem}\label{CompBipDowlingGenThm}
Let $\mathcal{J}_{n,k}^m$ be the hyperplane arrangement in
$$\{(x_1,\dots,x_n,y^{(1)},\dots,y^{(k)}) \st x_i, y^{(j)} \in \C\} = \C^{n+k}$$
given by
$$\mathcal{J}_{n,k}^m = \{x_1 - \omega^{p}x_j = y^{(\ell)} \st 1 < j \leq n, 0\leq p \leq m-1, 1\leq \ell \leq k\} \cup \{x_1 = y^{(\ell)} \st 1\leq \ell \leq k\},$$
where $\omega = e^{\frac{2 \pi i}{m}}$.

Then $\chi_{\mathcal{J}_{1,k}^m}(t) = (t-1)^k$ and
\begin{align*}\sum_{n\geq 2} &\chi_{\mathcal{J}_{n,k}^m}(t)u^{n-1}\\ &= \sum_{n\geq 2}\frac{(t-1)_{n-1,m}\left[(t-1-m(n-1))^k+(m(n-2)+1)(t-1-m(n-2))^{k-1}\right]u^{n-1}}{\prod_{i=0}^{n-2}\left[1-(mi+1)u\right]}.
\end{align*}
\end{theorem}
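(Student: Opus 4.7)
The plan is to first dispose of the base case $n=1$ by direct computation, then use Athanasiadis's finite-field method to produce a closed form for $\chi_{\mathcal{J}_{n,k}^m}(t)$ for each $n \geq 2$, and finally convert the resulting rational generating function into the shape stated in the theorem. For $n=1$, the arrangement $\mathcal{J}_{1,k}^m = \{x_1 = y^{(\ell)} : 1 \leq \ell \leq k\}$ is a generic pencil of $k$ hyperplanes in $\C^{k+1}$ sharing a common codimension-$k$ flat, so its intersection lattice is the Boolean lattice $B_k$ and $\chi_{\mathcal{J}_{1,k}^m}(t) = (t-1)^k$ as claimed.

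For $n \geq 2$, I would work over $\mathbb{F}_q$ with $q \equiv 1 \pmod{m}$ so that $\mathbb{F}_q$ contains a primitive $m$-th root of unity $\omega$. After the substitution $z_\ell := x_1 - y^{(\ell)}$, the condition that a point of $\mathbb{F}_q^{n+k}$ lies outside all hyperplanes of $\mathcal{J}_{n,k}^m$ becomes $z_\ell \neq 0$ for every $\ell$, together with $x_j \notin \{\omega^{-p} z_\ell : p, \ell\}$ for every $j \in \{2, \ldots, n\}$. Since $\langle\omega\rangle$ acts freely on $\mathbb{F}_q^*$, each orbit has size $m$, so if the multiset $\{z_1, \ldots, z_k\}$ meets exactly $r$ distinct orbits, each $x_j$ with $j > 1$ has $q - mr$ valid choices. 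Grouping the tuples $(z_1,\ldots,z_k)\in(\mathbb{F}_q^*)^k$ by the number of orbits they occupy, applying the standard surjection count $r!\,S(k,r)\,m^k$ (with $S(k,r)$ the Stirling number of the second kind) per choice of $r$ orbits, and dividing by $q$ to account for the one-dimensional common intersection of the arrangement, yields the polynomial identity
$$\chi_{\mathcal{J}_{n,k}^m}(t) = \sum_{r=1}^{k} m^{k-r}\, S(k,r)\, (t-1)_{r,m}\, (t-mr)^{n-1},$$
which holds for all $t$ by Zariski density. Summing the resulting geometric series in $n$ immediately produces
$$\sum_{n \geq 1} \chi_{\mathcal{J}_{n,k}^m}(t)\, u^{n-1} = \sum_{r=1}^{k} \frac{m^{k-r}\, S(k,r)\, (t-1)_{r,m}}{1 - (t-mr)u},$$
whose constant term in $u$ equals $(t-1)^k$ by the $m$-analog Stirling identity $x^k = \sum_r m^{k-r}\, S(k,r)\,(x)_{r,m}$, consistent with the $n=1$ case.

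The main obstacle is then to convert this partial-fraction expression into the form appearing in the theorem, whose denominators $\prod_{i=0}^{n-2}(1-(mi+1)u)$ depend on the outer index $n$ rather than on the summation variable $r$. I would attempt this by showing that both expressions satisfy the same recurrence in $n$ (or $k$) with matching initial conditions, using the analog of the recurrence that drives the proof of the real-case Theorem \ref{CompBipGenThm} in Section \ref{GenFunProofSec}, with the integer increments replaced by multiples of $m$. A more conceptual but heavier route, closer to the paper's overall strategy, would be to first establish a complex-arrangement analog of Theorem \ref{LambdaBondTh} identifying $\mathcal{L}(\mathcal{J}_{n,k}^m)$ with an $m$-Dowling bond lattice of the complete bipartite graph $K_{n,k}$, and then apply the complex version of the $\Lambda_V^m$ recurrence developed in Section \ref{SurjStaircaseTechniques} and extended to the complex setting in Section 8; the telescoping structure of that recurrence is what produces the $\prod_{i=0}^{n-2}(1-(mi+1)u)$ denominators directly, exactly paralleling the real argument.
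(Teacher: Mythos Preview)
Your finite-field computation is correct and yields the explicit closed form
\[
\chi_{\mathcal{J}_{n,k}^m}(t)=\sum_{r=1}^{k} m^{k-r}\,S(k,r)\,(t-1)_{r,m}\,(t-mr)^{n-1},
\]
which the paper never derives; summing the geometric series then gives a valid partial-fraction expression for the generating function. This is a genuinely different opening from the paper's, and the $n=1$ check via the identity $x^k=\sum_r m^{k-r}S(k,r)(x)_{r,m}$ is clean.

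However, the proposal stops exactly where you say it does, and that gap is real. Your partial-fraction form has finitely many $t$-dependent simple poles at $u=1/(t-mr)$, whereas the theorem's right-hand side is an infinite series whose $n$-th term carries the $t$-independent denominator $\prod_{i=0}^{n-2}\bigl(1-(mi+1)u\bigr)$; passing between these two presentations is a nontrivial formal identity that you do not prove. Your first suggested strategy, matching recurrences, is harder than it looks from your side: the only linear recurrence immediately visible in $\chi_n=\sum_r A_r(t-mr)^{n-1}$ has $t$-dependent characteristic roots, so producing one with the required $t$-independent shift structure essentially forces you to rediscover the $\Lambda_{T_n^k}$ recurrence~(\ref{CompBipRec}) and its specialization. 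Your second suggestion is precisely the paper's proof: it applies Theorem~\ref{DowlingDuFoTheorem} to write $\chi_{\mathcal{J}_{n,k}^m}(t)=(-m)^{n+k-1}\Lambda_{T_n^k}\bigl(\tfrac{1-t}{m},\tfrac{1-t}{m},1,0,\tfrac{-t}{m},1\bigr)$ for $n\ge 2$, substitutes into Theorem~\ref{NoStepGenEq} with $u\mapsto -mu$, and simplifies, noting that $\bar y-y-(\bar x+i)$ specializes to $-(mi+1)/m$, which is exactly what produces the stated denominators. In that route your closed form is never needed; conversely, from your closed form alone the theorem as stated does not follow without an additional identity you have not supplied.
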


\section{General Techniques}\label{SurjStaircaseTechniques}
In this section we will state and prove several technical results generalizing the theory of surjective staircases, which were introduced by Dumont in \cite{Interpretations_Combinatoires} (this theory was used in \cite{Type_A_Paper} to study the homogenized Linial arrangement). These results will be used in Sections \ref{GenFunProofSec} and \ref{DowlingSec} to prove the results of Section \ref{GenFunFormulaSection}.

\subsection{Generalized Surjective Staircases}
Let $S$ be a finite subset of the positive integers with largest element $2n$. We define the \emph{staircase diagram of $S$} to be the Ferrers diagram whose rows are labeled from bottom to top with the even elements of $S$ in increasing order and whose columns are labeled from left to right with all of the elements of $S$ in increasing order, with a cell in the row labeled $2i$ and column labeled $j$ if and only if $j\leq 2i$. Note that the staircase diagram of $S$ is not the same as the Ferrers diagram of the partition $\lambda(S)$ defined in Section \ref{FerrersGraphsAndGammaSection} --- we can recover $\lambda(S)$ from the staircase diagram of $S$ by deleting the columns labeled with the even elements of $S$. 

Let $S'$ be the subset of $S$ obtained by removing $2n$ and all odd elements between $2m$ and $2n$ from $S$, where $2m$ is the second-largest even element of $S$ (if $S$ contains only one even element, $S' = \emptyset$). We can view $S'$ as the set whose staircase diagram is obtained by deleting the top row of the staircase diagram of $S$. Similarly, let $S''$ be the set whose staircase diagram is obtained from the staircase diagram of $S'$ by deleting the top row of its staircase diagram.

\begin{example}
Suppose that $S = \{1,2,4,5,7, 8\}$. The staircase diagram for $S$ is seen below.
\begin{center}
\begin{ytableau}
\none & \none[1] & \none[2] & \none[4] & \none[5] & \none[7] & \none[8]\\
\none[8] & \; & \; & \; & \; & \; & \;\\
\none[4] & \; & \; & \;\\
\none[2] & \; & \;
\end{ytableau}
\end{center}
In this case, $S'= \{1,2,4\}$. The staircase diagram for $S'$ is seen below.
\begin{center}
\begin{ytableau}
\none & \none[1] & \none[2] & \none[4]\\
\none[4] & \; & \; & \;\\
\none[2] & \; & \;
\end{ytableau}
\end{center}
Finally, $S'' = \{1,2\}$. The staircase diagram for $S''$ is seen below.
\begin{center}
\begin{ytableau}
\none & \none[1] & \none[2]\\
\none[2] & \; & \;
\end{ytableau}
\end{center}
\end{example}

We say a function $F: S \to S$ is excedent if $F(x) \geq x$ for all $x \in S$. A \emph{generalized sujective staircase} is a surjective excedent map $F$ from $S$ to the set of even elements of $S$.  Let $\mathcal{X}_S$ be the set of all excedent functions $F: S \to S$ and $\mathcal{E}_S$ be the set of all generalized surjective staircases with domain $S$. 

For all $F \in \mathcal{E}_S$, we define
$$w(F) = x^{\text{mo}(F)}y^{\text{fd}(F)}z^{\text{si}(F)}\bar{x}^{\text{me}(F)}\bar{y}^{\text{fi}(F)}\bar{z}^{\text{sd}(F)},$$
and
$$\Lambda_S(x,y,z,\bar{x},\bar{y},\bar{z}) = \sum_{F \in \mathcal{E}_S}w(F),$$
where the statistics are defined as follows:
\begin{itemize}
	\item $2i-1 \in S'$ is a \emph{surfixed point} of $F$ if $F(2i-1)$ is the least even element of $S$ larger than $2i-1$. A surfixed point $2i-1$ is \emph{isolated} if there is no $j$ with $F(j) = F(2i-1)$ and \emph{doubled} otherwise. We write $\text{si}(F)$ and $\text{sd}(F)$ for the numbers of isolated and doubled surfixed points, respectively.
	
	\item $2i \in S'$ is a \emph{fixed point} of $F$ if $F(2i) = 2i$. A fixed point $2i$ is \emph{isolated} if there is no $j$ with $F(j) = 2i$ and is \emph{doubled} otherwise. We write $\text{fd}(F)$ and $\text{fi}(F)$ for the numbers of isolated and doubled fixed points, respectively.
	
	\item $2i-1 \in S'$ is an \emph{odd maximum} of $F$ if $F(2i-1) = 2n$ and $2i \in S'$ is an \emph{even maximum} of $F$ if $F(2i) = 2n$. We write $\text{mo}(F)$ and $\text{me}(F)$ for the numbers of odd and even maxima of $F$, respectively.
\end{itemize}

Note that elements of $S\setminus S'$ do not count as maxima or (sur)fixed points.

Generalized surjective staircases can be visualized using fillings of staircase diagrams of $S$.

\begin{example}
The following surjective staircase has weight $z\bar{x}\bar{y}$:

\begin{center}
\begin{ytableau}
\none & \none[1] & \none[2] & \none[4] & \none[5] & \none[7] & \none[8]\\
\none[8] & \; & X & \; & X & X & X\\
\none[4] & \; & \; & X\\
\none[2] & X & \;
\end{ytableau}.
\end{center}

\end{example}

When $S = [2n]$, these polynomials specialize to the \emph{generalized Dumont--Foata polynomials} $\Lambda_{2n}$. The generalized Dumont--Foata polynomials were introduced by Dumont in \cite{Dumont_Foata}, and Randrianarivony \cite{Randrianarivony_Du_Fo_Poly} and Zeng \cite{Zeng_Du_Fo_poly} independently proved a generating function formula for them.

\begin{remark}\label{GenSurjStaircaseRemark}
It is clear to see that $\Lambda_S$ depends only on $\lambda(S)$. That is, if $S$ and $T$ are two finite sets of integers with $\lambda(S) = \lambda(T)$, then the polynomials $\Lambda_S$ and $\Lambda_T$ are also equal.
\end{remark}

\begin{theorem}\label{GeneralizedRecursion}
Let $S$ be a finite set of positive integers whose largest element is even. Suppose that $S'\setminus S''$ contains $\ell$ odd elements. Then,
\begin{enumerate}
\item If $|S'| = 0 $, $\Lambda_S = 1$.
\item If $|S'| \geq 1$,
\begin{align*}\Lambda_{S}(x,y,z,\bar{x},\bar{y},\bar{z}) = &(y+\bar{x})(x+\bar{z})^{\ell}\Lambda_{S'}(x+1,y,z,\bar{x}+1,\bar{y},\bar{z})\\
 &+ x^{\ell-1}[x(\bar{y}-y) + \ell \bar{x}(z-\bar{z}) - x\bar{x}]\Lambda_{S'}(x,y,z,\bar{x},\bar{y},\bar{z}).\end{align*}
\end{enumerate}
\end{theorem}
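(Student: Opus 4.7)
My plan is induction on $|S'|$. The base case $|S'| = 0$ is immediate: then $S$ has a unique even element $2n$, and since every $x \in S$ satisfies $x \leq 2n$, excedence together with surjectivity force $F(x) = 2n$ for all $x$, giving $|\mathcal{E}_S| = 1$. All six statistics are defined relative to $S' = \emptyset$ and hence vanish; the unique $F$ has weight $1$ and $\Lambda_S = 1$.

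For the inductive step with $|S'| \geq 1$, I would set up a reduction map $\phi \colon \mathcal{E}_S \to \mathcal{E}_{S'}$. The key structural observation is that any $F \in \mathcal{E}_S$ must send $S \setminus S' = \{2n\} \cup O$, where $O$ is the set of odd elements of $S$ strictly between $2m$ and $2n$, entirely to $2n$, because $2n$ is the only even of $S$ exceeding $2m$. I would then define $\phi(F)(x) = F(x)$ when $x \in S'$ and $F(x) \neq 2n$, and $\phi(F)(x) = s(x)$ otherwise, where $s(x)$ denotes the smallest even element of $S'$ that is $\geq x$. The resulting $\phi(F)$ lies in $\mathcal{E}_{S'}$ since every even of $S'$ already has a preimage in $S'$ under $F$.

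Given $F' \in \mathcal{E}_{S'}$, its $\phi$-fiber is parametrized by the subset $A = F^{-1}(2n) \cap S'$, with admissibility requiring $F'(x) = s(x)$ for every $x \in A$ and $A$ not exhausting any fiber of $F'$ (to preserve surjectivity of $F$). Each of the $\ell + 1$ elements of $S' \setminus S''$ --- the element $2m$ and the $\ell$ odds --- automatically satisfies $F'(x) = s(x)$, so its membership in $A$ is an independent binary choice. Summing over these choices produces the factor $(y + \bar{x})(x + \bar{z})^{\ell}$: the slot for $2m$ contributes $y$ or $\bar{x}$ (fixed point of $F$ vs.\ even maximum of $F$) and each of the $\ell$ odds contributes $\bar{z}$ or $x$ (doubled surfixed point vs.\ odd maximum). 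The shifts $x \mapsto x+1$ and $\bar{x} \mapsto \bar{x} + 1$ inside the first $\Lambda_{S'}$ factor encode the parallel binary choice for each odd or even maximum of $F'$ lying in $S''$: such an element maps to $2m$ under $F'$ and, when lifting to $F$, may either retain that value (contributing $1$) or be promoted to $2n$ (contributing $x$ or $\bar{x}$), in the latter case becoming the corresponding maximum of $F$.

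The main obstacle will be tracking the isolated/doubled statistics across this reduction, since they are measured relative to preimages in $S$ for $F$ but relative to $S'$ for $F'$, and these multiplicities differ by the number of elements of $A$ that reroute away from any given even. Careful case analysis --- on the interaction between $A$ and the preimages of fixed and surfixed points in $S''$, and on the full preimage of $2m$ (which combines $S''$ elements mapping to $2m$ in $F'$ with the free contributions of the $\ell$ odds in $S' \setminus S''$) --- should produce the correction term $x^{\ell-1}[x(\bar{y} - y) + \ell \bar{x}(z - \bar{z}) - x\bar{x}] \Lambda_{S'}$. This term precisely compensates for the surjectivity constraint at $2m$ (which forbids the forbidden $A$ that would empty the $2m$-fiber) and for the coupling between the free choices on $S' \setminus S''$ and the statistics inherited from $S''$.
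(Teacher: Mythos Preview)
Your overall architecture matches the paper's: both arguments set up a correspondence between $\mathcal{E}_S$ and pairs consisting of an $F' \in \mathcal{E}_{S'}$ together with a subset $A$ of elements to be ``promoted'' to $2n$, and both then track how the six statistics change. The paper packages this as Lemma~\ref{TopRowLemma}.

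However, your definition of $\phi$ contains a genuine error that breaks your own fiber analysis. You reset a promoted element $a$ (one with $F(a)=2n$) to $s(a)$, the smallest even of $S'$ that is $\ge a$. But you then claim that each odd or even \emph{maximum} of $F'$ lying in $S''$ can independently be promoted to $2n$. Such an element $a$ has $F'(a)=2m$, whereas for $a\in S''$ one always has $s(a)\le 2m' < 2m$ (with $2m'=\max S''$). Thus promoting $a$ would yield $\phi(F)(a)=s(a)\ne 2m=F'(a)$, so $F$ would not lie in the fiber over $F'$. With your $\phi$, the only elements of $S'$ eligible for promotion are the \emph{(sur)fixed points} of $F'$, not its maxima; the shifts $x\mapsto x+1$, $\bar x\mapsto\bar x+1$ cannot arise from that.

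The fix is exactly what the paper does: reset every promoted element to $2m$ rather than to $s(a)$. Then the fiber over $F'$ is precisely $\{A\subsetneq (F')^{-1}(2m)\}$, and $(F')^{-1}(2m)$ decomposes as the $\ell$ odd elements of $S'\setminus S''$, the element $2m$ itself, and the odd/even maxima of $F'$ in $S''$. Your heuristic for the leading term $(y+\bar x)(x+\bar z)^\ell\,\Lambda_{S'}(x+1,\bar x+1)$ is correct once this is repaired. Note also that with the corrected $\phi$, the surjectivity constraint is simply $A\ne (F')^{-1}(2m)$ (only the $2m$-fiber is at risk), not ``$A$ not exhausting any fiber.''

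Finally, the correction term is where all the work is: the paper's proof spends several pages on a case split according to whether $2m\in A$ and on how many of the $\ell$ odd elements of $S'\setminus S''$ lie in $A$, carefully separating the extremal sub-cases (e.g.\ $A=(F')^{-1}(2m)\setminus\{2m\}$, or $A$ missing exactly one of the $\ell$ odds) where the isolated/doubled status of $2m$ flips. Your last paragraph gestures at this but does not carry it out; the identity does not fall out without that bookkeeping.
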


When $\ell = 1$, this is the same as the recurrence for $\Lambda_{2n}$ in terms of $\Lambda_{2n-2}$ from \cite[Theorem 3]{Randrianarivony_Du_Fo_Poly} and \cite[Theorem 4]{Zeng_Du_Fo_poly}. Indeed, our proof generalizes the techniques used in Zeng's proof of \cite[Theorem 4]{Zeng_Du_Fo_poly}. Our proof requires the following lemma.

\begin{lemma}\label{TopRowLemma}
Given a surjective staircase $F \in \mathcal{E}_{S'}$ construct a surjective staircase $\hat{F}_X \in \mathcal{E}_S$ by choosing a proper subset $X$ of $F^{-1}(2m)$, where $2m = \max S'$, and defining $\hat{F}_X$ by
$$\hat{F}_X(a) = \begin{cases} 2n, & a \in X \cup (S\setminus S')\\ F(a), & \text{otherwise} \end{cases}.$$
This map $(F,X) \mapsto \hat{F}_X$ is a well-defined bijection from the set of pairs $(F,X)$ where $F \in \mathcal{E}_{S'}$ and $X \subsetneq F^{-1}(2m)$ to $\mathcal{E}_S$.
\end{lemma}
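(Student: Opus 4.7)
The plan is a direct verification by exhibiting an explicit inverse map. First I will check that $\hat{F}_X$ is indeed an element of $\mathcal{E}_S$ for every admissible pair $(F, X)$: that it takes values in the even elements of $S$, that it is excedent, and that it is surjective onto the even elements of $S$. The first two conditions are routine (for $a \in X$, excedence uses $a \leq 2m < 2n$; for $a \in S \setminus S'$, $a \leq 2n$ by maximality). The only nontrivial condition is surjectivity: for the value $2n$, one uses $\hat{F}_X(2n) = 2n$; for the value $2m$, the hypothesis that $X$ is a \emph{proper} subset of $F^{-1}(2m)$ guarantees some preimage of $2m$ under $F$ survives in $S' \setminus X$, and that preimage still hits $2m$ under $\hat{F}_X$; for even $e < 2m$, any $F$-preimage $a$ of $e$ satisfies $a \notin F^{-1}(2m) \supseteq X$, so $\hat{F}_X(a) = F(a) = e$.

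Next I will construct the inverse. Given $G \in \mathcal{E}_S$, first observe that every element of $S \setminus S'$ must map to $2n$ under $G$: the odd elements of $S \setminus S'$ lie strictly between $2m$ and $2n$, so the excedence of $G$ together with the fact that $2n$ is the unique even element of $S$ that exceeds $2m$ forces them to $2n$, while $G(2n) = 2n$ trivially. Set $X_G := \{a \in S' : G(a) = 2n\}$ and define $F_G : S' \to S'$ by $F_G(a) := G(a)$ for $a \in S' \setminus X_G$ and $F_G(a) := 2m$ for $a \in X_G$. I will then verify that $F_G \in \mathcal{E}_{S'}$---the excedent property on $X_G$ uses $a \leq 2m$ since $a \in S'$, and surjectivity onto $2m$ uses that any $G$-preimage of $2m$ automatically lies in $S' \setminus X_G$---and that $X_G \subsetneq F_G^{-1}(2m)$, with strictness once again coming from that $G$-preimage of $2m$.

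Finally, the two compositions are immediate. Starting from $(F, X)$, one recovers the subset via $X_{\hat{F}_X} = X$ because for $a \in S'$, $\hat{F}_X(a) = 2n$ precisely when $a \in X$; one then recovers $F$ itself from the formula, using the containment $X \subseteq F^{-1}(2m)$ to match values on $X$. Conversely, starting from $G$ and applying the hat construction to $(F_G, X_G)$, one directly checks the result equals $G$ by comparing values on each of the three regions $X_G$, $S \setminus S'$, and $S' \setminus X_G$. The only delicate point in the whole argument---and the only place the hypothesis ``proper subset'' actually gets used---is ensuring that $2m$ retains at least one preimage after the modifications, which is exactly what preserves surjectivity of $\hat{F}_X$; everything else is bookkeeping.
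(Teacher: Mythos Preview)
Your proof is correct and follows essentially the same approach as the paper's own argument, which consists of a single-sentence diagrammatic description of the inverse (adding a new top row, moving a proper subset of the top-row marks up, and filling the new columns). You have simply supplied the detailed verification---well-definedness, excedence, surjectivity, the explicit inverse, and both round-trips---that the paper leaves implicit.
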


\begin{proof}
Diagramatically, $\hat{F}_X$ is obtained from $F$ by adding a new top row to the staircase diagram for $F$, moving a proper subset of the filled squares in the top row of $F$ to this new top row, and filling all of the squares corresponding to the elements of $S\setminus S'$.
\end{proof}

\begin{example}
Let $S = \{1,2,4,5,7,8\}$. Then $S' = \{1,2,4\}$. Suppose that $F \in \mathcal{E}_{S'}$ is the surjective staircase pictured below, and $X = \{2\}$.
\begin{center}
\begin{ytableau}
\none & \none[1] & \none[2] & \none[4]\\
\none[4] & \; & *(red) X & X\\
\none[2] & X & \;
\end{ytableau}
\end{center}

Then $\hat{F}_X \in \mathcal{E}_{S}$ is the following surjective staircase.
\begin{center}
\begin{ytableau}
\none & \none[1] & \none[2] & \none[4] & \none[5] & \none[7] & \none[8]\\
\none[8] & \; & X & \; & X & X & X\\
\none[4] & \; & \; & X\\
\none[2] & X & \;
\end{ytableau}
\end{center}
\end{example}

\begin{proof}[Proof of Theorem \ref{GeneralizedRecursion}]
If $|S'| = 0$, then $|\Lambda_S| = 1$ and the weight of that surjective staircase must be $1$.

Now, suppose $|S'| \geq 1$. We proceed by using Lemma \ref{TopRowLemma} to compute $\Lambda_S$ in terms of $\Lambda_{S'}$.

Given a pair $(X,F)$, we wish to compute $w(\hat{F}_X)$ in terms of $w(F)$ and $X$. Let $\text{MO}(F)$ and $\text{ME}(F)$ be the sets of odd and even maxima of $F$, respectively (so $|\text{MO}(F)| = \text{mo}(F)$ and $|\text{ME}(F)| = \text{me}(F)$). Furthermore, let $O(X)$ and $E(X)$ be the numbers of odd and even elements of $X$, respectively.

\textbf{Case 1.} Suppose that $2m \notin X$.

If $X = F^{-1}(2m)\setminus \{2m\}$, then
\begin{align*}w(\hat{F}_X) &= x^{\text{mo}(F)+\ell}y^{\text{fd}(F)}z^{\text{si}(F)}\bar{x}^{\text{me}(F)}\bar{y}^{\text{fi}(F)+1}\bar{z}^{\text{sd}(F)}\\
&= x^{\ell}\bar{y}w(F).\end{align*}

Thus, the set of all such $\hat{F}_X$ contribute $x^{\ell}\bar{y}\Lambda_{S'}$ to $\Lambda_S$.

Now, suppose that $X$ contains exactly $k$ of the $\ell$ odd elements of $S'\setminus S''$ for $0\leq k \leq \ell$, but that $X \subsetneq F^{-1}(2m)\setminus \{2m\}$. Then $O(X) \geq k$, since the new maxima of $\hat{F}_X$ consist of some subset of the odd maxima of $F$ along with the $k$ odd elements of $|T\setminus U|$. Hence we have
\begin{align*}
w(\hat{F}_X) &= x^{O(X)}y^{\text{fd}(F)+1}z^{\text{si}(F)}\bar{x}^{E(X)}\bar{y}^{\text{fi}(F)}\bar{z}^{\text{sd}(F)+(\ell-k)}\\
&= x^{k}y\bar{z}^{\ell-k}\left(x^{O(X)-k}y^{\text{fd}(F)}z^{\text{si}(F)}\bar{x}^{E(X)}\bar{y}^{\text{fi}(F)}\bar{z}^{\text{sd}(F)}\right).
\end{align*}

Any such $X$ can be obtained by choosing subsets $A$ and $B$ of the odd and even maxima of $F$, respectively, and then choosing $k$ elements of $S'\setminus S''$ in $\binom{\ell}{k}$ ways. Thus, when $k<\ell$ we see
\begin{align*}
\sum_{F \in \mathcal{E}_{S'}}&\sum_{\substack{X \subsetneq F^{-1}(2m)\setminus\{2m\}\\ |X\cap (S'\setminus S'')| = k}}w(\hat{F}_X)\\ 
&= \sum_{F \in \mathcal{E}_{S'}}\sum_{\substack{X \subsetneq F^{-1}(2m)\setminus\{2m\}\\ |X\cap (S\setminus S'')| = k}}x^{k}y\bar{z}^{\ell-k}\left(x^{O(X)-k}y^{\text{fd}(F)}z^{\text{si}(F)}\bar{x}^{E(X)}\bar{y}^{\text{fi}(F)}\bar{z}^{\text{sd}(F)}\right)\\
&= y\sum_{F \in \mathcal{E}_{S'}}y^{\text{fd}(F)}z^{\text{si}(F)}\bar{y}^{\text{fi}(F)}\bar{z}^{\text{sd}(F)}\binom{\ell}{k}x^k\bar{z}^{\ell-k}\sum_{A\times B \subseteq \text{MO}(F)\times\text{ME}(F)}x^{|A|}\bar{x}^{|B|}\\
&=y\binom{k}{\ell}x^k\bar{z}^{\ell-k}\sum_{F \in \mathcal{E}_{S'}}y^{\text{fd}(F)}z^{\text{si}(F)}\bar{y}^{\text{fi}(F)}\bar{z}^{\text{sd}(F)}(x+1)^{\text{mo}(F)}(\bar{x}+1)^{\text{me}(F)}\\
&=y\binom{k}{\ell}x^k\bar{z}^{\ell-k}\Lambda_{S'}(x+1,y,z,\bar{x}+1,\bar{y},\bar{z}).
\end{align*}

Meanwhile, when $k=\ell$, $X$ must consist of the $\ell$ odd elements of $S'\setminus S''$ along with a \emph{proper} subset of the maxima of $F$ (we considered the case when $X$ contained all of $F^{-1}(2m)\setminus\{2m\}$ already). This yields
\begin{align*}
\sum_{F \in \mathcal{E}_{S'}}&\sum_{\substack{X \subsetneq F^{-1}(2m)\setminus\{2m\}\\ |X\cap (S'\setminus S'')| = k}}w(\hat{F}_X)\\ 
&= \sum_{F \in \mathcal{E}_{S'}}\sum_{\substack{X \subsetneq F^{-1}(2m)\setminus\{2m\}\\ |X\cap (S'\setminus S'')| = \ell}}x^{\ell}y\left(x^{O(X)-\ell}y^{\text{fd}(F)}z^{\text{si}(F)}\bar{x}^{E(X)}\bar{y}^{\text{fi}(F)}\bar{z}^{\text{sd}(F)}\right)\\
&= y\sum_{F \in \mathcal{E}_{S'}}y^{\text{fd}(F)}z^{\text{si}(F)}\bar{y}^{\text{fi}(F)}\bar{z}^{\text{sd}(F)}x^{\ell}\sum_{A\times B \subsetneq \text{MO}(F)\times\text{ME}(F)}x^{|A|}\bar{x}^{|B|}\\
&= yx^{\ell}\sum_{F \in \mathcal{E}_{S'}}y^{\text{fd}(F)}z^{\text{si}(F)}\bar{y}^{\text{fi}(F)}\bar{z}^{\text{sd}(F)}\left((x+1)^{\text{mo}(F)}(\bar{x}+1)^{\text{me}(F)} - x^{\text{mo}(F)}\bar{x}^{\text{me}(F)}\right)\\
&= yx^{\ell}\left(\Lambda_{S'}(x+1,y,z,\bar{x}+1,\bar{y},\bar{z}) - \Lambda_{S'}(x,y,z,\bar{x},\bar{y},\bar{z})\right).
\end{align*}

Summing over all $k$ between $0$ and $\ell$ (including the case when $X = F^{-1}(2m)\setminus\{2m\})$, we obtain a total contribution of
\begin{align*}
y&\left(\sum_{k=0}^{\ell}\binom{\ell}{k}x^k\bar{z}^{\ell-k}\right)\Lambda_{S'}(x+1,y,z,\bar{x}+1,\bar{y},\bar{z}) - x^{\ell}(\bar{y}-y)\Lambda_{S'}(x,y,z,\bar{x},\bar{y},\bar{z})\\
&= y(x+\bar{z})^{\ell}\Lambda_{S'}(x+1,y,z,\bar{x}+1,\bar{y},\bar{z}) + x^{\ell}(\bar{y}-y)\Lambda_{S'}(x,y,z,\bar{x},\bar{y},\bar{z})   
\end{align*}
to $\Lambda_S$.

\textbf{Case 2.} Suppose that $2m \in X$.

Again, suppose that $X$ contains exactly $k$ of the $\ell$ odd elements of $T\setminus U$ for $0\leq k \leq \ell$.

When $k = \ell$, $X$ consists of $S'\setminus S''$ (which consists of $\ell$ odd elements and one even element) along with some proper subset of the maxima of $F$. In this case,
\begin{align*}w(\hat{F}_X) &= x^{O(X)}y^{\text{fd}(F)}z^{\text{si}(F)}\bar{x}^{E(X)}\bar{y}^{\text{fi}(F)}\bar{z}^{\text{sd}(F)}\\
&= x^{\ell}\bar{x}\left(x^{|A|}y^{\text{fd}(F)}z^{\text{si}(F)}\bar{x}^{|B|}\bar{y}^{\text{fi}(F)}\bar{z}^{\text{sd}(F)}\right),
\end{align*}
where $A$ and $B$ are subsets of $\text{MO}(F)$ and $\text{ME}(F)$, respectively, such that $A\times B \neq \text{MO}(F)\times\text{ME}(F)$.

Hence,
\begin{align*}
\sum_{F \in \mathcal{E}_{S'}}&\sum_{\substack{X \subseteq F^{-1}(2m)\\(S'\setminus S'')\subseteq X}}w(\hat{F}_X)\\ 
&= \sum_{F\in\mathcal{E}_{S'}}x^{\ell}\bar{x}\sum_{A\times B \subsetneq \text{MO}(F)\times\text{ME}(F)}x^{|A|}y^{\text{fd}(F)}z^{\text{si}(F)}\bar{x}^{|B|}\bar{y}^{\text{fi}(F)}\bar{z}^{\text{sd}(F)}\\
&= x^{\ell}\bar{x}\sum_{F \in \mathcal{E}_{S'}}y^{\text{fd}(F)}z^{\text{si}(F)}\bar{y}^{\text{fi}(F)}\bar{z}^{\text{sd}(F)}\sum_{A\times B \subsetneq \text{MO}(F)\times\text{ME}(F)}x^{|A|}\bar{x}^{|B|}\\
&=  x^{\ell}\bar{x}\sum_{F \in \mathcal{E}_{S'}}y^{\text{fd}(F)}z^{\text{si}(F)}\bar{y}^{\text{fi}(F)}\bar{z}^{\text{sd}(F)}\left((x+1)^{\text{mo}(F)}(\bar{x}+1)^{\text{me}(F)}-x^{\text{mo}(F)}\bar{x}^{\text{me}(F)}\right)\\
&= x^{\ell}\bar{x}\left(\Lambda_{S'}(x+1,y,z,\bar{x}+1,\bar{y},\bar{z}) - \Lambda_{S'}(x,y,z,\bar{x},\bar{y},\bar{z})\right).
\end{align*}

When $k = \ell - 1$, there are two possibilities. If $X = F^{-1}(2m)\setminus\{2i-1\}$ for some $2i-1\in S'\setminus S''$, then
\begin{align*}w(\hat{F}_X) &= x^{\text{mo}(F)+(\ell-1)}y^{\text{fd}(F)}z^{\text{si}(F)+1}\bar{x}^{\text{me}(F)+1}\bar{y}^{\text{fi}(F)}\bar{z}^{\text{sd}(F)}\\
&= x^{\ell-1}z\bar{x}w(F).\end{align*}
This can happen in $\ell$ ways, so such $\hat{F}_X$ give a contribution of $\ell x^{\ell-1}z\bar{x}\Lambda_{S'}$ to $\Lambda_S$.

On the other hand, if there is at least one maximum of $F$ that is not in $X$, then $X$ consists of $2m$, $(\ell-1)$ odd elements of $S'\setminus S''$, and some proper subset of the maxima of $F$. In this case,
\begin{align*}
w(\hat{F}_X) &= x^{|A|+(\ell-1)}y^{\text{fd}(F)}z^{\text{si}(F)}\bar{x}^{|B|+1}\bar{y}^{\text{si}(F)}\bar{z}^{\text{sd}(F)+1}\\
&= x^{\ell-1}\bar{x}\bar{z}\left(x^{|A|}y^{\text{fd}(F)}z^{\text{si}(F)}\bar{x}^{|B|}\bar{y}^{\text{si}(F)}\bar{z}^{\text{sd}(F)}\right),
\end{align*}
where $A$ and $B$ are subsets of the odd and even maxima of $F$, respectively, such that $A\times B \neq \text{MO}(F)\times \text{ME}(F)$. Note that for any such pair of $A$ and $B$, there are exactly $\ell$ such sets $X$.

Thus,
\begin{align*}
\sum_{F\in\mathcal{E}_{S'}}&\sum_{\substack{X\subsetneq F^{-1}(2m)\\ 2m \in X\\ X\cap (S'\setminus S'') = \ell}}w(\hat{F}_X)\\ 
&= \ell x^{\ell-1}\bar{x}\bar{z}\sum_{F\in\mathcal{E}_{S'}}y^{\text{fd}(F)}z^{\text{si}(F)}\bar{y}^{\text{si}(F)}\bar{z}^{\text{sd}(F)}\sum_{A\times B\subsetneq \text{MO}(F)\times\text{ME}(F)}x^{|A|}\bar{x}^{|B|}\\
&= \ell x^{\ell-1}\bar{x}\bar{z}\sum_{F\in\mathcal{E}_{S'}}y^{\text{fd}(F)}z^{\text{si}(F)}\bar{y}^{\text{si}(F)}\bar{z}^{\text{sd}(F)}\left((x+1)^{\text{mo}(F)}(\bar{x}+1)^{\text{me}(F)} - x^{\text{mo}(F)}\bar{x}^{\text{me}(F)}\right)\\
&= \ell x^{\ell-1}\bar{x}\bar{z}\left(\Lambda_{S'}(x+1,y,z,\bar{x}+1,\bar{y},\bar{z}) - \Lambda_{S'}(x,y,z,\bar{x},\bar{y},\bar{z})\right).
\end{align*}

When $0\leq k \leq \ell-2$ $X$ consists of: $2m$, $k$ odd elements of $S'\setminus S''$, and some subset (not necessarily proper) of the maxima of $F$. Hence,
\begin{align*}
w(\hat{F}_X) &= x^{|A|+k}y^{\text{fd}(F)}z^{\text{si}(F)}\bar{x}^{|B|+1}\bar{y}^{\text{fi}(F)}\bar{z}^{\text{sd}(F)+(\ell-k)}\\
&= x^k\bar{x}\bar{z}^{\ell-k}\left(x^{|A|}y^{\text{fd}(F)}z^{\text{si}(F)}\bar{x}^{|B|}\bar{y}^{\text{fi}(F)}\bar{z}^{\text{sd}(F)}\right),
\end{align*}
where $A$ and $B$ are subsets of the odd and even maxima of $F$, respectively. As before, given any pair $A$ and $B$, there are $\binom{\ell}{k}$ such sets $X$.

Therefore,
\begin{align*}
\sum_{F\in\mathcal{E}_{S'}}&\sum_{\substack{X\subsetneq F^{-1}(2m)\\ 2m \in X\\ X \cap (S'\setminus S'') = k+1}}w(\hat{F}_X) \\
&= \binom{\ell}{k}x^k\bar{x}\bar{z}^{\ell-k}\sum_{F\in\mathcal{E}_{S'}}y^{\text{fd}(F)}z^{\text{si}(F)}\bar{y}^{\text{fi}(F)}\bar{z}^{\text{sd}(F)}\sum_{A\times B\subseteq \text{MO}(F)\times \text{ME}(F)}x^{|A|}\bar{x}^{|B|}\\
&= \binom{\ell}{k}x^k\bar{x}\bar{z}^{\ell-k}\sum_{F \in \mathcal{E}_{S'}}y^{\text{fd}(F)}z^{\text{si}(F)}\bar{y}^{\text{fi}(F)}\bar{z}^{\text{sd}(F)}(x+1)^{\text{mo}(F)}(\bar{x}+1)^{\text{me}(F)}\\
&= \binom{\ell}{k}x^k\bar{x}\bar{z}^{\ell-k}\Lambda_{S'}(x+1,y,z,\bar{x}+1,\bar{y},\bar{z}).
\end{align*}

Summing over all $0\leq k \leq \ell$, this gives a total contribution of
\begin{align*}
\bar{x}&\left(\sum_{k=0}^{\ell}\binom{\ell}{k}x^k\bar{z}^{\ell-k}\right)\Lambda_{S'}(x+1,y,z,\bar{x}+1,\bar{y},\bar{z}) + \bar{x}(\ell x^{\ell-1}z - x^{\ell} - \ell x^{\ell-1}\bar{z})\Lambda_{S'}(x,y,z,\bar{x},\bar{y},\bar{z})\\
&= \bar{x}(x+\bar{z})^{\ell}\Lambda_{S'}(x+1,y,z,\bar{x}+1,\bar{y},\bar{z}) + x^{\ell-1}(\ell \bar{x}(z - \bar{z}) - x\bar{x})\Lambda_{S'}(x,y,z,\bar{x},\bar{y},\bar{z})
\end{align*}
to $\Lambda_S$.

Finally, we combine the contributions from Cases 1 and 2 to obtain:
\begin{align*}
\Lambda_S(x,y,z,\bar{x},\bar{y},\bar{z}) = &(y+\bar{x})(x+\bar{z})^{\ell}\Lambda_{S'}(x+1,y,z,\bar{x}+1,\bar{y},\bar{z})\\ &+ x^{\ell-1}[x(\bar{y}-y) + \ell \bar{x}(z-\bar{z}) - x\bar{x}]\Lambda_{S'}(x,y,z,\bar{x},\bar{y},\bar{z}).
\end{align*}
\end{proof}

\subsection{D-permutations and Generalized Surjective Staircases}

The following lemma was proved in \cite[Lemma 5.2]{Type_A_Paper}:

\begin{lemma}\label{bijectionProp}
	There is a bijection $$\phi:  \mathcal{D}_{2n} \to  \{ f \in \mathcal{E}_{2n+2}: f \mbox { has  no even maxima} \}$$ such that for all $\sigma \in \mathcal{D}_{2n}$ and $j \in [2n]$, the following properties hold:
	\begin{enumerate}
		\item $j$ is an even cycle maximum of $\sigma$ if and only if it is a fixed point of $\phi(\sigma)$,
		\item $j$ is an even fixed point of $\sigma$ if and only if it is an isolated fixed point of $\phi(\sigma)$,
		\item  $j$ is an odd fixed point of $\sigma$ if and only if it is an odd maximum of $\phi(\sigma)$.
	\end{enumerate} 
\end{lemma}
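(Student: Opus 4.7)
The plan is to build $\phi$ explicitly by a ``backward walk'' on each cycle of $\sigma$, directly verify the three statistic correspondences, then reverse the construction using the rooted-tree structure encoded by $f$. For $\sigma \in \mathcal{D}_{2n}$ I would set $f(2n+1) = f(2n+2) = 2n+2$ (forced by excedence and the even codomain); set $f(j) = 2n+2$ for every odd fixed point $j$ of $\sigma$; and for every other $j \in [2n]$, set $f(j) = \sigma^{-a}(j)$, where $a \geq 1$ is the least integer such that $\sigma^{-a}(j)$ is even and $\geq j$. Such an $a$ exists because the cycle of $j$ in $\sigma$ has an even maximum $\geq j$.

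Next I would verify that $f \in \mathcal{E}_{2n+2}$ with no even maxima in $[2n]$. Excedence and evenness of $f(j)$ are immediate; and $f(j) \in [2n]$ whenever $j \in [2n]$ is not an odd fixed point, so no even maxima occur in $[2n]$. For surjectivity onto the evens of $[2n+2]$: for an even $e \in [2n]$ that is not a cycle maximum of $\sigma$, one has $\sigma(e) < e$, so $\sigma^{-1}(\sigma(e)) = e$ is even and $\geq \sigma(e)$, giving $f(\sigma(e)) = e$; and every even cycle maximum maps to itself since no element of its cycle is strictly larger. Properties (1)-(3) then follow by case analysis: for (1), $f(j) = j$ forces every element of $j$'s cycle to be $\leq j$, so $j$ is the cycle maximum; for (2), any preimage $k$ of $j$ under $f$ satisfies $k = \sigma^a(j)$, which collapses to $k = j$ precisely when $j$ is $\sigma$-fixed; for (3), $f(j) = 2n+2$ with $j \in [2n]$ can only arise via the odd-fixed-point clause, since all other walk-values lie in $[2n]$.

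To construct $\phi^{-1}$, I would declare the odd maxima and isolated fixed points of $f$ to be $\sigma$-fixed points. For each doubled fixed point $m$ of $f$, consider the tree $T_m$ on the set of elements iterating to $m$ under $f$, with parent map $f$. Surjectivity of $f$ combined with the no-even-maxima hypothesis forces every internal node of $T_m$ to be even and every leaf of $T_m$ to be odd (an even leaf would fall outside $\im f$, and an odd internal node would fall inside it). I would then recover the $\sigma$-cycle on $T_m$ as the depth-first traversal of $T_m$ rooted at $m$, visiting the children of each node in increasing order of value. The main obstacle will be verifying that this DFS produces a D-permutation cycle whose backward walks recover $f$: at each even internal node $v$ the smallest child is $<v$, so $\sigma(v) \leq v$; after any odd leaf $x$ the next DFS step is a sibling of some ancestor of $x$ strictly larger than $x$, so $\sigma(x) \geq x$; and tracing $\sigma^{-1}$ from any vertex in $T_m$ returns precisely the $f$-parent path, so the backward-walk construction of $\phi(\sigma)$ recovers $f$. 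A direct check that $\phi^{-1}\circ\phi$ is the identity then completes the argument.
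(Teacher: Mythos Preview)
The paper does not prove this lemma; it is quoted from \cite[Lemma~5.2]{Type_A_Paper} and used as a black box, so there is no in-paper argument to compare against.

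Your construction is correct and natural. The forward map is well defined (every nontrivial D-cycle has an even maximum, so the backward walk terminates in $[2n]$), it lands in $\mathcal{E}_{2n+2}$ with no even maxima, and your verifications of (1)--(3) go through as written. The inverse via DFS on the rooted trees $T_m$ is also right: surjectivity of $f$ together with evenness of its image forces internal vertices of $T_m$ to be even and leaves to be odd, the DFS visibly yields a D-cycle, and the backward walk from any vertex $v$ in the DFS order first meets an even element $\ge v$ precisely at the tree parent $f(v)$, giving $\phi\circ\phi^{-1}=\mathrm{id}$.

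The one step you leave as a ``direct check'' --- that $\phi^{-1}\circ\phi=\mathrm{id}$ --- is not automatic from $\phi\circ\phi^{-1}=\mathrm{id}$ alone and does need its own argument. The crux is that for each even element $c$ in a $\sigma$-cycle, $\sigma(c)$ must be the \emph{smallest} element of $f^{-1}(c)$: if some $j\in f^{-1}(c)$ were smaller than $\sigma(c)$, then every intermediate element on the $\sigma^{-1}$-walk from $j$ back to $c$ would be $>j$ yet (by the minimality in the definition of $f(j)$) not even, forcing a strictly increasing run of odd values that would have to end at $j$ itself --- a contradiction. This pins $\sigma$ down on the even elements of each cycle, and an induction along the cycle then identifies $\sigma$ on the odd elements with the DFS successor. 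So the gap is small and fillable exactly along the lines you sketch.
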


Our next goal is to generalize Lemma \ref{bijectionProp}. Let $V$ be a finite set of positive integers whose maximum is even, and let $R_k$ be a finite set of $k+1$ positive integers consisting of:
\begin{itemize}
\item $k$ odd numbers larger than $\max V$, and
\item one even number that is larger than the odd numbers.
\end{itemize}

We will show that the D-permutations on $V$ are in bijection with certain of the surjective staircases with domain $V\cup R_k$.

\begin{lemma}\label{GeneralizedbijectionProp}
For any $k$, let $V$ and $R_k$ be as above. There is a bijection
$$\tilde{\phi}: \mathcal{D}_{V} \to \{f \in \mathcal{E}_{V\cup R_k} \st f \text{ has no even maxima}\},$$
such that for all $\sigma \in \mathcal{D}_{V}$ and all $j \in V$
\begin{enumerate}
\item $j$ is an even cycle maximum of $\sigma$ if and only if $j$ is a fixed point of $\phi(\sigma)$,
\item $j$ is an even fixed point of $\sigma$ if and only if $j$ is an isolated fixed point of $\phi(\sigma)$,
\item $j$ is an odd fixed point of $\sigma$ if and only if $j$ is an odd maximum of $\phi(\sigma)$.
\end{enumerate}
\end{lemma}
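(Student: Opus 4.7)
The plan is to generalize the construction of $\phi$ from Lemma \ref{bijectionProp} essentially verbatim, letting the set $R_k$ collectively play the role that $\{2n+1, 2n+2\}$ did in the original. Nothing in the Lazar--Wachs construction truly requires $V = [2n]$: what matters is having a dedicated even ``sink'' above all of $V$ into which odd fixed points can flow, and $\max R_k$ serves this purpose regardless of how many odd elements $R_k$ contains.

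Concretely, I would define $\tilde{\phi}(\sigma) = f$ by setting $f(j) = \max R_k$ for every $j \in R_k$, and for each $j \in V$ letting $f(j)$ be the first even element $\geq j$ appearing in the sequence $\sigma^{-1}(j), \sigma^{-2}(j), \sigma^{-3}(j), \dots$, with the convention $f(j) = \max R_k$ if no such element exists. The D-permutation axioms make this well-defined: in any nontrivial cycle the maximum is necessarily even (an odd cycle-maximum would force a fixed point), so iterating $\sigma^{-1}$ from any element of that cycle eventually reaches an even element $\geq j$. The only $j \in V$ for which no valid element exists are exactly the odd fixed points of $\sigma$.

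Verification that $f \in \mathcal{E}_{V \cup R_k}$ has no even maxima is then routine: excedence is built into the definition; every even $m \in V$ lies in the image, since $f(m) = m$ when $m$ is its own cycle maximum and $f(\sigma(m)) = m$ otherwise (because $\sigma(m) \leq m$ and $\sigma^{-1}(\sigma(m)) = m$ is the first even element $\geq \sigma(m)$ seen); $\max R_k$ is hit by itself; and no even $j \in V$ maps to $\max R_k$, since by construction $f(j) \in V$ for such $j$. Properties (1)--(3) then read off directly: $f(j) = j$ iff $j$ is an even cycle-maximum of $\sigma$; $j$ is an isolated fixed point of $f$ iff additionally the cycle of $\sigma$ through $j$ is trivial (i.e., $j$ is an even fixed point of $\sigma$); and $j \in V$ is an odd maximum of $f$ iff $j$ is an odd fixed point of $\sigma$, by definition of $f$.

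The main obstacle is constructing the inverse. Given $f$ with no even maxima, I would partition $V$ according to the functional digraph of $f|_V$: isolated fixed points of $f$ become even $\sigma$-fixed points; odd $j \in V$ sent to $\max R_k$ are identified as odd $\sigma$-fixed points; and each non-trivial connected component is a tree rooted at an even cycle-maximum $M$ and must encode a single $\sigma$-cycle containing all of its elements. Within each such tree I expect to recover the cyclic order by induction on $|V|$, peeling off $\max V$ and the cycle containing it: the D-permutation inequalities $\sigma(i) \geq i$ for $i$ odd and $\sigma(i) \leq i$ for $i$ even, combined with the tree structure of $f$, should force a unique interleaving of odd and even elements around the cycle. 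Verifying this rigidity, and then checking that the resulting map is genuinely a two-sided inverse of $\tilde{\phi}$ that preserves all the listed statistics, is the technical heart of the proof.
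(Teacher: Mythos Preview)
Your forward construction is correct and is indeed the natural extension of the Lazar--Wachs map $\phi$. However, the paper takes a different and more economical route: rather than rebuild $\phi$ in the general setting, it \emph{reduces} to Lemma~\ref{bijectionProp}. Given $\sigma \in \mathcal{D}_V$, the paper extends $\sigma$ to $\sigma' \in \mathfrak{S}_{2n-2}$ (where $2n = \max R_k$) by declaring every element of $[2n-2]\setminus V$ to be a fixed point, applies the already-established bijection $\phi$ to get $f' \in \mathcal{E}_{2n}$, and then checks that restricting $f'$ to $V \cup R_k$ lands in $\mathcal{E}_{V\cup R_k}$. The point is that the artificially added fixed points become, under $\phi$, isolated even fixed points and odd maxima of $f'$; these are exactly the data one throws away when restricting, and the restriction is easily seen to be a bijection onto the no-even-maxima staircases. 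Properties (1)--(3) are then inherited for free from Lemma~\ref{bijectionProp}.

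The advantage of the paper's approach is that it never touches the inverse construction at all: bijectivity of $\tilde\phi$ is a composition of three maps ($\sigma \mapsto \sigma'$, $\phi$, and restriction), each of which is transparently a bijection. Your approach, by contrast, obliges you to reconstruct the cycle from the functional tree directly --- which, as you correctly identify, is the technical heart of Lemma~\ref{bijectionProp} itself. Your sketch of this step (``peel off $\max V$ by induction'' and ``the D-permutation inequalities force a unique interleaving'') is plausible but is not yet a proof; you would essentially be reproving Lemma~\ref{bijectionProp} in slightly greater generality. There is no mathematical obstruction to doing so, but since the paper already has Lemma~\ref{bijectionProp} available as a black box, the reduction is the cleaner argument.
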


\begin{proof}
Let $\sigma \in \mathcal{D}_V$. We wish to apply Lemma \ref{bijectionProp}. Let $2m = \max V$, $2n = \max R_k$ and let $\sigma'$ be the permutation in $\mathfrak{S}_{2n-2}$ defined by
$$\sigma'(i) = \begin{cases}\sigma(i), & i \in V\\ i, & \text{otherwise}\end{cases}.$$
Notice that the map $\psi:\{\sigma \in \mathfrak{S}_V\}\to\{\sigma \in \mathfrak{S}_{2n-2} \st \sigma(i) = i \Forall i \notin V\}$ given by $\sigma \mapsto \sigma'$ is a bijection.

We apply the map $\phi$ from Lemma \ref{bijectionProp} to $\sigma'$ to obtain a surjective staircase $f' \in \mathcal{E}_{2n}$ with no even maxima. Our next goal is to show that the restriction of $f'$ to $V\cup R_k$ is a surjective staircase.

First, recall that for all $i \in [2n-2]\setminus V$, $\sigma'(i) = i$. Hence, if $i \in [2n-2]\setminus V$ is even, $i$ is an isolated fixed point of $f'$, and if $i \in [2n-2]\setminus V$ is odd, $i$ is an odd maximum of $f'$. This tells us immediately that for all $i \in V$, $f'(i) \in [2n]\setminus ([2n-2]\setminus V) = V \cup \{2n-1,2n\}$. Since the image of $f'$ contains no odd numbers, we see that $f'(i) \in V \cup \{2n\}$ for all $i \in V$. In other words,
$$f'(V) \subseteq \{\text{even elements of }V\} \cup \{2n\}.$$

Conversely, we claim that $\{\text{even elements of }V\} \subseteq f'(V)$. Indeed, every element of $[2n-2]\setminus V$ is either a fixed point or a maximum of $f'$, so if $i \in [2n-2]\setminus V$, $f'(i)\notin V$. By definition, $f'(2n-1) = f'(2n) = 2n$. Hence, $\{\text{even elements of }V\} \nsubseteq f'([2n]\setminus V)$, so the only way for $f'$ to be surjective is to have 
$$\{\text{even elements of }V\} \subseteq f'(V).$$

We also see that $f'(R_k) = \{2n\}$. Indeed, we know that $f'(2n-1) = f'(2n) = 2n$ (whether or not $2n-1 \in R_k$). Any other element $i$ of $R_k$ is an odd element of $[2n-2]\setminus V$, and by construction, all such $i$ are odd maxima of $f'$. Hence,
$$f'(V\cup R_k) = \{\text{even elements of }V\} \cup \{2n\}.$$ 

Thus, we can define a surjective staircase $f \in \mathcal{E}_{V\cup R_k}$ by restricting the domain of $f'$ to $V \cup R_k$. Indeed, we have just shown that the image of such an $f$ would be $\{\text{even elements of V }\} \cup \{2n\}$, and such an $f$ would be excedent because $f'$ was. We also know that $f$ would have no even maxima because $f'$ had no even maxima.

Finally, note that the image of $\phi \circ \psi$ consists of surjective staircases $f' \in \mathcal{E}_{2n}$ with no even maxima, such that $f'(2i-1) = 2n$ for all $2i-1\notin V$, and such that $2i$ is an isolated fixed point of $f'$ for all $2i \notin V$. We observe that restricting the domain of $f'$ to $V \cup R_k$ yields a bijection between the image of $\phi\circ\psi$ and $\{f \in \mathcal{E}_{V \cup R_k} \st f \text{ has no even maxima}\}$. Indeed, given any $f \in \mathcal{E}_{V\cup R_k}$ that has no even maxima, we define a $g \in \mathcal{E}_{2n}$ with no even maxima by
$$g(a) = \begin{cases}
f(a), & a \in V\\
2n, & a\in [2n]\setminus V \text{ is odd}\\
a, & a\in [2n]\setminus V \text{ is even}
\end{cases}.$$
Such a $g$ is in the image of $\phi\circ\psi$ because $2i-1$ is an odd maximum of $g$ for all $2i-1 \in [2n-2]\setminus V$ and $2i$ is an isolated fixed point of $g$ for all $2i \in [2n-2]\setminus V$. Moreover, by construction we see that $g|_{V\cup R_k} = f$.

Hence, we define $\tilde{\phi}: \mathcal{D}_V \to \mathcal{E}_{V \cup R_k}$ by $\tilde{\phi}(\sigma) = \left.\phi(\sigma')\right|_{V \cup R_k}$. Since the three desired properties held for $\phi$, they hold for $\tilde{\phi}$ automatically. The map $\tilde{\phi}$ is the composition of three maps $\sigma \mapsto \sigma' \mapsto f' \mapsto f$, each of which is a bijection, so $\tilde{\phi}$ is a bijection.
\end{proof}

Next, we prove a technical result generalizing \cite[Lemma 5.4]{Type_A_Paper}.

\begin{lemma}\label{FerrersDPermLemma}
For any $k$,
$$\sum_{\sigma \in \mathcal{D}_{V}}t^{c(\sigma)} = \Lambda_{V\cup R_k}(t,t,1,0,t,1),$$
where $c(\sigma)$ is the number of cycles of $\sigma$.
\end{lemma}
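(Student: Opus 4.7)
The plan is to reduce the identity to a direct application of the bijection $\tilde{\phi}$ from Lemma \ref{GeneralizedbijectionProp}, by first observing that the specialization kills most terms on the right.

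First, substitute $(x,y,z,\bar{x},\bar{y},\bar{z}) = (t,t,1,0,t,1)$ into the weight:
$$w(F)\big|_{(t,t,1,0,t,1)} = t^{\mathrm{mo}(F)}\cdot t^{\mathrm{fd}(F)} \cdot 1^{\mathrm{si}(F)}\cdot 0^{\mathrm{me}(F)}\cdot t^{\mathrm{fi}(F)}\cdot 1^{\mathrm{sd}(F)}.$$
Because of the factor $0^{\mathrm{me}(F)}$, the only staircases in $\mathcal{E}_{V\cup R_k}$ that contribute are those with no even maxima, and each such $F$ contributes $t^{\mathrm{mo}(F)+\mathrm{fd}(F)+\mathrm{fi}(F)}$. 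Thus
$$\Lambda_{V\cup R_k}(t,t,1,0,t,1) = \sum_{\substack{F \in \mathcal{E}_{V\cup R_k}\\ \mathrm{me}(F)=0}} t^{\mathrm{mo}(F)+\mathrm{fd}(F)+\mathrm{fi}(F)}.$$

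By Lemma \ref{GeneralizedbijectionProp}, the map $\tilde{\phi}$ is a bijection from $\mathcal{D}_V$ onto the set of $F\in\mathcal{E}_{V\cup R_k}$ with no even maxima, so it suffices to prove that for every $\sigma \in \mathcal{D}_V$,
$$c(\sigma) = \mathrm{mo}(\tilde{\phi}(\sigma)) + \mathrm{fd}(\tilde{\phi}(\sigma)) + \mathrm{fi}(\tilde{\phi}(\sigma)).$$
The key observation is that every cycle of a D-permutation $\sigma$ has a unique cycle maximum which is either even or odd; if it is odd, say equal to $i$, then the D-permutation condition forces $\sigma(i)\geq i$, while the cycle-max condition forces $\sigma(i)\leq i$, so $i$ is an odd fixed point (and the cycle is a singleton). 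Consequently
$$c(\sigma) = \#\{\text{odd fixed points of }\sigma\} + \#\{\text{even cycle maxima of }\sigma\}.$$

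Applying the three properties of $\tilde{\phi}$ listed in Lemma \ref{GeneralizedbijectionProp}, odd fixed points of $\sigma$ correspond to odd maxima of $\tilde{\phi}(\sigma)$ (contributing $\mathrm{mo}$), while even cycle maxima of $\sigma$ correspond to fixed points of $\tilde{\phi}(\sigma)$ of either flavor: those that are also even fixed points of $\sigma$ map to isolated fixed points (contributing $\mathrm{fi}$), and the remaining even cycle maxima map to doubled fixed points (contributing $\mathrm{fd}$). This yields the required identity, and summing over $\sigma\in\mathcal{D}_V$ completes the proof. There is no real obstacle here beyond bookkeeping; the only subtle point is noticing that cycles of a D-permutation are always ``anchored'' at either an odd fixed point or an even element, so the three statistics $\mathrm{mo}$, $\mathrm{fd}$, $\mathrm{fi}$ exactly partition the cycles of $\sigma$.
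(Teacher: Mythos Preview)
Your proof is correct and follows essentially the same approach as the paper: both specialize the weight to kill the staircases with an even maximum, then transport the count through the bijection $\tilde{\phi}$ of Lemma~\ref{GeneralizedbijectionProp}. The paper's proof simply asserts $c(\sigma)=\mathrm{fi}(\tilde{\phi}(\sigma))+\mathrm{fd}(\tilde{\phi}(\sigma))+\mathrm{mo}(\tilde{\phi}(\sigma))$ without comment, whereas you supply the missing justification by observing that every cycle of a D-permutation is anchored either at an odd fixed point or an even cycle maximum and then invoking the three properties of $\tilde{\phi}$.
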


\begin{proof} We apply the bijection $\tilde{\phi}$ from Lemma \ref{GeneralizedbijectionProp}:
\begin{align*}
\sum_{\sigma \in \mathcal{D}_{V}}t^{c(\sigma)} &= \sum_{\sigma \in \mathcal{D}_{V}} t^{\text{fi}(\tilde{\phi}(\sigma)) + \text{fd}(\tilde{\phi}(\sigma))+\text{mo}(\tilde{\phi}(\sigma))}\\
&= \sum_{\substack{f \in \mathcal{E}_{V\cup R_k}\\ \text{me}(f) = 0}}t^{\text{fi}(f) + \text{fd}(f) + \text{mo}(f)}\\
&= \Lambda_{V\cup R_k}(t,t,1,0,t,1).
\end{align*} \end{proof}

\section{Generating Function Proofs}\label{GenFunProofSec}

We now use the technical results established in Section \ref{SurjStaircaseTechniques} to prove Theorems \ref{kStaircaseCharGenTh} and \ref{CompBipGenThm}.

\subsection{\texorpdfstring{$k$}{k}-step staircases}
Let 
$$S_n^k = \{1,3,5,\dots,2nk-1\} \sqcup \{2k,4k,\dots,2nk\},$$
and note that $\lambda(S_n^k)$ is the $k$-staircase partition $\lambda^{(n)}_k$.

We are ready to prove a generalization of Randrianarivony and Zeng's generating function formula for the generalized Dumont--Foata polynomials.
\begin{theorem}\label{FixedStepGenFun}
For all $k\geq 1$,
\begin{align*}
\sum_{n\geq 1}&\Lambda_{S_n^k}(x,y,z,\bar{x},\bar{y},\bar{z})u^{n-1}\\
&= \sum_{n\geq 1}\frac{(y+\bar{x})^{(n-1)}\left((x+\bar{z})^{(n-1)}\right)^{k}u^{n-1}}{\prod_{i=0}^{n-1}\left(1-(x+i)^{k-1}\left[(x+i)(\bar{y}-y) + k(x+i)(z-\bar{z}) - (x+i)(\bar{x}+i)\right]u\right)},
\end{align*}
where $x^{(n)} = x(x+1)\cdots(x+(n-1))$.
\end{theorem}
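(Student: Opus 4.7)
The plan is to specialize the recurrence of Theorem~\ref{GeneralizedRecursion} to the family $\{S_n^k\}$, translate the resulting two-term recurrence into a functional equation for the generating function $F(u;x,\bar{x}) \coloneqq \sum_{n\geq 1}\Lambda_{S_n^k}u^{n-1}$, and then solve it by iterated substitution.

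First I would check directly from the definition that $(S_n^k)' = S_{n-1}^k$ for every $n\geq 2$: the largest element of $S_n^k$ is $2nk$, the second-largest even element is $2(n-1)k$, and the odd elements of $S_n^k$ strictly between them are $2(n-1)k+1, 2(n-1)k+3, \ldots, 2nk-1$, which number exactly $k$. Consequently the parameter $\ell$ appearing in Theorem~\ref{GeneralizedRecursion} equals $k$ \emph{uniformly} (including the case $n=2$, where $(S_2^k)''=\emptyset$ and $(S_2^k)'=S_1^k$ contributes the relevant $k$ odd elements). Substituting $\ell=k$ into Theorem~\ref{GeneralizedRecursion}, together with the base case $\Lambda_{S_1^k}=1$ coming from $|(S_1^k)'|=0$, yields
\begin{align*}
\Lambda_{S_n^k}(x,y,z,\bar{x},\bar{y},\bar{z}) = {}&(y+\bar{x})(x+\bar{z})^k\,\Lambda_{S_{n-1}^k}(x+1,y,z,\bar{x}+1,\bar{y},\bar{z})\\
&+ P(x,\bar{x})\,\Lambda_{S_{n-1}^k}(x,y,z,\bar{x},\bar{y},\bar{z}),
\end{align*}
where I abbreviate $P(x,\bar{x})\coloneqq x^{k-1}\bigl[x(\bar{y}-y) + k\bar{x}(z-\bar{z}) - x\bar{x}\bigr]$.

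Next I would multiply both sides by $u^{n-1}$, sum over $n\geq 2$, and treat $y,z,\bar{y},\bar{z}$ as fixed parameters. Absorbing the $n=1$ term via the base case produces the functional equation
$$F(u;x,\bar{x}) = \frac{1}{1-uP(x,\bar{x})} + \frac{u(y+\bar{x})(x+\bar{z})^k}{1-uP(x,\bar{x})}\,F(u;x+1,\bar{x}+1).$$
Iterating by repeated substitution expresses $F(u;x,\bar{x})$, after $N$ steps, as the sum of the first $N$ terms of the claimed series together with a remainder of the form $\bigl(\prod_{i=0}^{N-1}\frac{u(y+\bar{x}+i)(x+\bar{z}+i)^k}{1-uP(x+i,\bar{x}+i)}\bigr)\,F(u;x+N,\bar{x}+N)$. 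Since $F$ is a formal power series in $u$ with constant term $1$, this remainder has $u$-valuation at least $N$, so the coefficient of every fixed power of $u$ in $F(u;x,\bar{x})$ stabilizes once $N$ is large enough; passing to the formal limit $N\to\infty$ and recognizing $\prod_{i=0}^{n-2}(y+\bar{x}+i)=(y+\bar{x})^{(n-1)}$ and $\prod_{i=0}^{n-2}(x+\bar{z}+i)^k=\bigl((x+\bar{z})^{(n-1)}\bigr)^k$ recovers the right-hand side of the theorem.

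The main conceptual hurdle is the very first step: it is the uniform validity of $\ell=k$ at \emph{every} application of the recurrence that allows the denominator factors to accumulate cleanly as $\prod_{i=0}^{n-1}(1-uP(x+i,\bar{x}+i))$, with a single polynomial $P$ undergoing the uniform shift $(x,\bar{x})\mapsto(x+i,\bar{x}+i)$. This uniformity is precisely what distinguishes the $k$-staircase shape $\lambda_k^{(n)}$ --- every pair of consecutive even elements of $S_n^k$ is separated by exactly $k$ odd elements --- and is what makes a closed-form generating function possible at all. Once this uniformity is verified, the remaining steps are routine formal-power-series bookkeeping.
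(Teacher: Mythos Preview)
Your proposal is correct and follows essentially the same approach as the paper: apply the recurrence of Theorem~\ref{GeneralizedRecursion} with the uniform value $\ell=k$ (since $(S_n^k)'=S_{n-1}^k$), derive the functional equation relating $F(u;x,\bar{x})$ to $F(u;x+1,\bar{x}+1)$, and then iterate. You are slightly more explicit than the paper in verifying that $\ell=k$ holds at every step and in justifying the formal convergence of the iterated substitution, but otherwise the arguments are the same.
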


\begin{proof}
Throughout this proof we will abbreviate $\Lambda_{S}(x,y,z,\bar{x},\bar{y},\bar{z})$ as $\Lambda_{S}$, and similarly write $\Lambda_S(x+1,y,z,\bar{x}+1,\bar{y},\bar{z})$ as $\Lambda_{S}(x+1,\bar{x}+1)$. By assumption, $S_1^k$ contains exactly one even element, so $\Lambda_{S_1^k} = 1$. We can thus write
\begin{align*}\sum_{n\geq 1}\Lambda_{S_n^k}u^{n-1} &= 1 + \sum_{n\geq 2}\Lambda_{S_n^k}u^{n-1}\\
&= 1 + \sum_{n\geq 2}(y+\bar{x})(x+\bar{z})^{k}\Lambda_{S_{n-1}^k}(x+1,\bar{x}+1)u^{n-1}\\ &\;\;\;\;+ \sum_{n\geq 2}x^{k-1}[x(\bar{y}-y) + k \bar{x}(z-\bar{z}) - x\bar{x}]\Lambda_{S_{n-1}^k}u^{n-1}\\
&= 1 + \sum_{n\geq 1}(y+\bar{x})(x+\bar{z})^{k}u\Lambda_{S_{n}^k}(x+1,\bar{x}+1)u^{n-1}\\ &\;\;\;\;+ \sum_{n\geq 1}x^{k-1}[x(\bar{y}-y) + k \bar{x}(z-\bar{z}) - x\bar{x}]u\Lambda_{S_{n}^k}u^{n-1},\end{align*}
with the second equality being the recurrence proved in Theorem \ref{GeneralizedRecursion}.

We can rearrange this equation to obtain
\begin{align*}\left(1-x^{k-1}[x(\bar{y}-y) + k \bar{x}(z-\bar{z}) - x\bar{x}]u\right)&\sum_{n\geq 1}\Lambda_{S_n^k}u^{n-1}\\ 
&= 1+ ((y+\bar{x})(x+\bar{z})^{k}u)\sum_{n\geq 1}\Lambda_{S_n^k}(x+1,\bar{x}+1)u^{n-1},
\end{align*}
so we have
\begin{align*}
\sum_{n\geq 1}\Lambda_{S_n^k}u^{n-1} &= \frac{1}{1-x^{k-1}[x(\bar{y}-y) + k \bar{x}(z-\bar{z}) - x\bar{x}]u}\\
& + \frac{(y+\bar{x})(x+\bar{z})^{k}u}{1-x^{k-1}[x(\bar{y}-y) + k \bar{x}(z-\bar{z}) - x\bar{x}]u}\sum_{n\geq 1}\Lambda_{S_n^k}(x+1,\bar{x}+1)u^{n-1}.
\end{align*}

By recursively expanding the right-hand side of this equation, we obtain the desired generating function formula.
\end{proof}

We can now prove Theorem \ref{kStaircaseCharGenTh}.

\begin{proof}[Proof of Theorem \ref{kStaircaseCharGenTh}.]
By Corollary \ref{LambdaGammaThm}, we know that $\Gamma_{S_n^k} = \Gamma_{V_n^k}$ for any $V_n^k$ satisfying $\lambda(V_n^k) = \lambda^{(n)}_k$, so without loss of generality we can take $V_n^k = S_n^k$. By Lemma \ref{FerrersDPermLemma}, we know that 
$$\sum_{\sigma \in \mathcal{D}_{S_n^k}}t^{c(\sigma)} = \Lambda_{S_{n+1}^k}(t,t,1,0,t,1).$$ 
Hence, we have
\begin{align*}\sum_{n\geq 1}\sum_{\sigma \in \mathcal{D}_{S_n^k}}t^{c(\sigma)}u^n &= \sum_{n\geq 1}\Lambda_{S_{n+1}^k}(t,t,1,0,t,1)u^n\\
 &= -1 +\sum_{n\geq 1}\Lambda_{S_n^k}(t,t,1,0,t,1)u^{n-1}\\
&= -1 + \sum_{n\geq 1} \frac{t^{(n-1)}\left((t+1)^{(n-1)}\right)^ku^{n-1}}{\prod_{i=0}^{n-1}\left(1+i(t+i)^{k}u\right)}\\
&= \sum_{n\geq 2} \frac{t^{(n-1)}\left((t+1)^{(n-1)}\right)^ku^{n-1}}{\prod_{i=0}^{n-1}\left(1+i(t+i)^{k}u\right)}\\
&= \sum_{n\geq 1} \frac{t^{(n)}\left((t+1)^{(n)}\right)^ku^{n}}{\prod_{i=0}^{n}\left(1+i(t+i)^{k}u\right)}, \end{align*}
with the third equality following from Theorem \ref{FixedStepGenFun}.

Next, we have 
\begin{align}\sum_{n\geq 1}\sum_{\sigma \in \mathcal{D}_{S_n^k}}(-t)^{c(\sigma) - 1}u^n &= -\frac{1}{t}\sum_{n\geq 1} \frac{(-t)^{(n)}\left((-t+1)^{(n)}\right)^ku^{n}}{\prod_{i=0}^{n}\left(1+i(-t+i)^{k}u\right)}\nonumber\\
&= \sum_{n\geq 1} \frac{(-1)^{nk + n-1}(t-1)_{n-1}\left((t-1)_{n}\right)^ku^{n}}{\prod_{i=0}^{n}\left(1+i(i-t)^{k}u\right)}\label{kStepEq}.\end{align}

By Theorem \ref{mobth}, we know that $\chi_{\Pi_{\Gamma_{S_n^k}}}(t) = (-1)^{nk + n - 1}\displaystyle\sum_{\sigma \in \mathcal{D}_{S_n^k}}(-t)^{c(\sigma) - 1}$, because $\Gamma_{S_n^k}$ has $nk + n$ vertices. Hence,
\begin{align*}
\sum_{n\geq 1}\chi_{\Pi_{\Gamma_{S_n^k}}}(t)u^n &= -\sum_{n\geq 1}\sum_{\sigma \in \mathcal{D}_{S_n^k}}(-t)^{c(\sigma) - 1}(-1)^{nk + n}u^n\\
&= -\sum_{n\geq 1}\sum_{\sigma \in \mathcal{D}_{S_n^k}}(-t)^{c(\sigma) - 1}((-1)^{k+1}u)^n\\
&= -\sum_{n\geq 1} \frac{(-1)^{nk + n-1}(t-1)_{n-1}\left((t-1)_{n}\right)^k(-1)^{nk + n}u^{n}}{\prod_{i=0}^{n}\left(1+i(i-t)^{k}(-1)^{k+1}u\right)}\\
&= \sum_{n\geq 1} \frac{(t-1)_{n-1}\left((t-1)_{n}\right)^ku^{n}}{\prod_{i=0}^{n}\left(1-i(t-i)^{k}u\right)},
\end{align*}
with the third equality following from Equation (\ref{kStepEq}).
\end{proof}

\subsection{The complete bipartite graph \texorpdfstring{$K_{n,k}$}{K(n,k)}.}
Fix $k \geq 1$, and let $T_n^k = \{1,3,5,\dots,2k-1\}\sqcup\{2k,\dots,2(k+n-1)\}$ as in Section \ref{CompBipSection}. By applying Theorem \ref{GeneralizedRecursion}, we see that 
$$\Lambda_{T_1^k} = 1,$$ 
\begin{align}\label{CompBipBase}\Lambda_{T_2^k} &= (y+\bar{x})(x+\bar{z})^k + x^{k-1}[x(\bar{y} - y) - k\bar{x}(\bar{z}-z) - x\bar{x}],\end{align}
and
\begin{align}\label{CompBipRec}
\Lambda_{T_n^k} &= (y+\bar{x})\Lambda_{T_{n-1}}(x+1,\bar{x}+1) + (\bar{y} - y -\bar{x})\Lambda_{T_{n-1}^k}
\end{align}
for all $n\geq 3$, where $\Lambda_{T_n^k}(x+1,\bar{x}+1) \coloneqq \Lambda_{T_n^k}(x+1,y,z,\bar{x}+1,\bar{y},\bar{z})$.

\begin{theorem}\label{NoStepGenEq}
We have
$$\sum_{n\geq 1}\Lambda_{T_n^k}u^{n-1} = 1 + \sum_{n\geq 2}\frac{(y+\bar{x})^{(n-2)}\Lambda_{T_2^k}(x+(n-2),\bar{x}+(n-2))u^{n-1}}{\prod_{i=0}^{n-2}(1-(\bar{y} - y - (\bar{x}+i))u)},$$
where $a^{(n)}$ is the rising factorial $a(a+1)\cdots (a+(n-1))$.
\end{theorem}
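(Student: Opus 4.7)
The plan is to derive a functional equation for the generating function $F(x,\bar{x},u) \coloneqq \sum_{n\geq 1}\Lambda_{T_n^k}u^{n-1}$ using the recurrence \eqref{CompBipRec}, and then iteratively solve it. This is directly analogous to the proof of Theorem \ref{FixedStepGenFun}, but simpler because the recurrence \eqref{CompBipRec} has no factors of $(x+\bar{z})^{\ell}$ (corresponding to $\ell=0$ in Theorem \ref{GeneralizedRecursion}).

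First I would set $G(x,\bar{x},u) \coloneqq F(x,\bar{x},u) - 1 = \sum_{n\geq 2}\Lambda_{T_n^k}u^{n-1}$, since the theorem statement already isolates the $1$ coming from $\Lambda_{T_1^k}$. Multiplying \eqref{CompBipRec} by $u^{n-1}$ and summing over $n\geq 3$ (after re-indexing $m=n-1$) gives
$$G(x,\bar{x},u) - \Lambda_{T_2^k}(x,\bar{x})\,u = (y+\bar{x})\,u\,G(x+1,\bar{x}+1,u) + (\bar{y}-y-\bar{x})\,u\,G(x,\bar{x},u),$$
and solving for $G(x,\bar{x},u)$ yields the functional equation
$$G(x,\bar{x},u) = \frac{\Lambda_{T_2^k}(x,\bar{x})\,u}{1-(\bar{y}-y-\bar{x})u} + \frac{(y+\bar{x})u}{1-(\bar{y}-y-\bar{x})u}\,G(x+1,\bar{x}+1,u).$$

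Next I would iterate this equation: substituting the formula for $G(x+1,\bar{x}+1,u)$ into itself, then $G(x+2,\bar{x}+2,u)$, and so on. After iterating $n-2$ times one obtains a telescoping product in the numerator (the rising factorial $(y+\bar{x})^{(n-2)}$) and a product of shifted denominators of the form $\prod_{i=0}^{n-2}[1-(\bar{y}-y-(\bar{x}+i))u]$. Formally, a straightforward induction on $N$ shows
$$G(x,\bar{x},u) = \sum_{n=2}^{N}\frac{(y+\bar{x})^{(n-2)}\Lambda_{T_2^k}(x+(n-2),\bar{x}+(n-2))u^{n-1}}{\prod_{i=0}^{n-2}(1-(\bar{y}-y-(\bar{x}+i))u)} + R_N(x,\bar{x},u),$$
where the remainder $R_N$ involves $G(x+N-1,\bar{x}+N-1,u)$ multiplied by a factor of order $u^{N-1}$. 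Since $G$ is a formal power series in $u$ with no constant term, the remainder $R_N$ has $u$-valuation at least $N$, so the equality holds as formal power series when $N\to\infty$.

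Finally, adding $1 = F - G$ to both sides gives the claimed formula. The only mild obstacle is bookkeeping: one must carefully match the indices in the iteration (the $i$-th step introduces the factor $(y+\bar{x}+i-1)u/[1-(\bar{y}-y-(\bar{x}+i-1))u]$ and shifts the argument of $\Lambda_{T_2^k}$ by $1$), and one must verify convergence in the formal power series topology, both of which are routine.
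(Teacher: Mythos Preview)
Your proposal is correct and follows essentially the same approach as the paper's proof: both derive the functional equation $G(x,\bar{x},u) = \dfrac{\Lambda_{T_2^k}u}{1-(\bar{y}-y-\bar{x})u} + \dfrac{(y+\bar{x})u}{1-(\bar{y}-y-\bar{x})u}\,G(x+1,\bar{x}+1,u)$ from the recurrence \eqref{CompBipRec} and then iterate it. Your version is slightly more explicit about the convergence of the iteration in the formal power series topology, whereas the paper simply says ``by recursively expanding the right-hand side,'' but the argument is the same.
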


\begin{proof}
We can write
\begin{align*}\sum_{n\geq 1}&\Lambda_{T_n^k}u^{n-1}\\ 
&= 1 + \Lambda_{T_2^k}u + \sum_{n\geq 3}\Lambda_{T_n^k}u^{n-1}\\
&= 1 + \Lambda_{T_2^k}u + (y+\bar{x})\sum_{n\geq 3}\Lambda_{T_{n-1}^k}(x+1,\bar{x}+1)u^{n-1} + (\bar{y}- y -\bar{x})\sum_{n\geq 3}\Lambda_{T_{n-1}^k}u^{n-1}\\
&= 1 + \Lambda_{T_2}u + (y+\bar{x})u\sum_{n\geq 2}\Lambda_{T_{n}^k}(x+1,\bar{x}+1)u^{n-1} +(\bar{y}-y -\bar{x})u\sum_{n\geq 2}\Lambda_{T_n^k}u^{n-1},
\end{align*}
with the second equality following from Equation (\ref{CompBipRec}).

Subtracting $1$ from both sides, we have
$$\sum_{n\geq 2}\Lambda_{T_n^k}u^{n-1} = \Lambda_{T_2^k}u + (y+\bar{x})u\sum_{n\geq 2}\Lambda_{T_{n}^k}(x+1,\bar{x}+1)u^{n-1} +(\bar{y}-y -\bar{x})u\sum_{n\geq 2}\Lambda_{T_n^k}u^{n-1}.$$

By rearranging, we see
$$(1-(\bar{y}-y-\bar{x})u)\sum_{n\geq 2}\Lambda_{T_n^k}u^{n-1} = \Lambda_{T_2^k}u + (y+\bar{x})u\sum_{n\geq 2}\Lambda_{T_{n}^k}(x+1,\bar{x}+1)u^{n-1},$$
or equivalently that
$$\sum_{n\geq 2}\Lambda_{T_n^k}u^{n-1} = \frac{\Lambda_{T_2^k}u}{1 - (\bar{y}-y-\bar{x})u} + \frac{(y+\bar{x})u}{1-(\bar{y} - y - \bar{x})u}\sum_{n\geq 2}\Lambda_{T_{n}^k}(x+1,\bar{x}+1)u^{n-1}.$$

By recursively expanding the right-hand side of the above equation, we see that
$$\sum_{n\geq 2}\Lambda_{T_n^k}u^{n-1} = \sum_{n\geq 2}\frac{(y+\bar{x})^{(n-2)}\Lambda_{T_2^k}(x+(n-2),\bar{x}+(n-2))u^{n-1}}{\prod_{i=0}^{n-2}(1-(\bar{y} - y - (\bar{x}+i))u)}.$$
\end{proof}

Now, by applying Lemma \ref{FerrersDPermLemma}, we can prove Theorem \ref{CompBipGenThm}.

\begin{proof}[Proof of Theorem \ref{CompBipGenThm}.]
By Lemma \ref{FerrersDPermLemma}, we know that
$$\sum_{\sigma \in \mathcal{D}_{T_i^k}}t^{c(\sigma)} = \Lambda_{T_{i+1}^k}(t,t,1,0,t,1),$$
so
\begin{align*}
\sum_{n\geq 1}\sum_{\sigma \in \mathcal{D}_{T_n^k}}t^{c(\sigma)}u^n &= \sum_{n\geq 2}\Lambda_{T_n^k}(t,t,1,0,t,1)u^{n-1}\\
&= \sum_{n\geq 2}\frac{t^{(n-2)}\left.\left(\Lambda_{T_2^k}(x+(n-2),\bar{x}+(n-2))\right)\right|_{(t,t,1,0,t,1)}u^{n-1}}{\prod_{i=0}^{n-2}(1+iu)},
\end{align*}
by Theorem \ref{NoStepGenEq}.

Now, by Equation (\ref{CompBipBase}),
\begin{align*}\Lambda_{T_2^k}(x+i,\bar{x}+i) = &(y+\bar{x}+i)(x+\bar{z}+i)^k\\ &+ (x+i)^{k-1}\left[(x+i)(\bar{y}-y) -k(\bar{x}+i)(\bar{z}-z) - (x+i)(\bar{x}+i)\right],\end{align*}
so
\begin{align*}\left.\left(\Lambda_{T_2^k}(x+i,\bar{x}+i)\right)\right|_{(t,t,1,0,t,1)} &= (t+i)(t+i+1)^k + (t+i)^{k-1}(-(t+i)i)\\
&= (t+i)(t+i+1)^k -i(t+i)^k.
\end{align*}

Thus, after reindexing we have
\begin{align*}\sum_{n\geq 1}\sum_{\sigma \in \mathcal{D}_{T_n^k}}t^{c(\sigma)}u^n &= \sum_{n\geq 1}\frac{t^{(n-1)}((t+(n-1))(t+n)^k - (n-1)(t+(n-1))^k)u^{n}}{\prod_{i=1}^{n-1}(1+iu)}\\
&= t(t+1)^ku + \sum_{n\geq 2}\frac{t^{(n)}\left[(t+n)^k - (n-1)(t+(n-1))^{k-1}\right]u^n}{\prod_{i=0}^{n-1}(1+iu)}\end{align*}

Next, we have
\begin{align*}\sum_{n\geq 1}&\sum_{\sigma \in \mathcal{D}_{T_n^k}}(-t)^{c(\sigma) - 1}\\ 
&= -\frac{1}{t}\left((-t)(1-t)^ku + \sum_{n\geq 2}\frac{(-t)^{(n)}\left[(n-t)^k - (n-1)((n-1)-t)^{k-1}\right]u^n}{\prod_{i=0}^{n-1}(1+iu)}\right)\\
&= (1-t)^ku + \sum_{n\geq 2}\frac{(-1)^{n +k -1}(t-1)_{n-1}\left[(t-n)^k + (n-1)(t-(n-1))^{k-1}\right]u^n}{\prod_{i=0}^{n-1}(1+iu)}.
\end{align*}

Finally, since $\Gamma_{T_n^k}$ has $n+k$ vertices, $\chi_{\Pi_{\Gamma_{T_n^k}}}(t) = (-1)^{n+k-1}\displaystyle\sum_{\sigma \in \mathcal{D}_{T_n^k}}(-t)^{c(\sigma)-1}$ by Theorem \ref{mobth}. Hence,
\begin{align*}&\sum_{n\geq 1}\chi_{\Pi_{\Gamma_{T_n^k}}}(t)u^n\\
&= \sum_{n\geq 1}(-1)^{n+k-1}\sum_{\sigma \in \mathcal{D}_{T_n^k}}(-t)^{c(\sigma)-1}u^n\\
&= (-1)^{k-1}\sum_{n\geq 1}\sum_{\sigma \in \mathcal{D}_{T_n}}(-t)^{c(\sigma)-1}(-u)^n\\
&= (t-1)^ku\\ &\hspace{5mm}+ (-1)^{k-1}\sum_{n\geq 2}\frac{(-1)^{n +k -1}(t-1)_{n-1}\left[(t-n)^k + (n-1)(t-(n-1))^{k-1}\right](-1)^nu^n}{\prod_{i=0}^{n-1}(1-iu)}\\
&= (t-1)^ku + \sum_{n\geq 2}\frac{(t-1)_{n-1}\left[(t-n)^k + (n-1)(t-(n-1))^{k-1}\right]u^n}{\prod_{i=0}^{n-1}(1-iu)}\\
&= \sum_{n\geq 1}\frac{(t-1)_{n-1}\left[(t-n)^k + (n-1)(t-(n-1))^{k-1}\right]u^n}{\prod_{i=0}^{n-1}(1-iu)}.
\end{align*}
Finally, we multiply by $t$ to obtain the desired generating function for the chromatic polynomial.
\end{proof}

\section{Dowling Analogs}\label{DowlingSec}

\subsection{Homogenized \texorpdfstring{$\nu$}{ν}-Dowling Arrangements}

As in \cite[Chapter 4]{Alex_Thesis} (see also \cite{LaWa}), we can consider a complex analog of the homogenized $\nu$-arrangements. Many of the results of the previous sections can be extended to the complex case after some technical results are established.

Let $\nu = (\nu_1,\dots,\nu_n)$ be a weak composition of $m$, and fix a positive integer $q$. Let $\omega$ be the primitive $q$th root of unity $e^{\frac{2 \pi i}{q}}$. We define the \emph{homogenized $\nu$-Dowling arrangement} to be the hyperplane arrangement in
$$\{(x_1,\dots,x_n, y_1^{(1)},\dots,y_1^{(\nu_1)},y_2^{(1)}.\dots,y_2^{(\nu_2)},\dots,y_n^{(1)},\dots,y_n^{(\nu_n)})\} = \C^{m+n}$$
given by
\begin{align*}\mathcal{H}_{\nu}^q = &\{x_i - \omega^{k}x_j = y_i^{(\ell)} \st 1 \leq i < j \leq n, 0\leq k \leq q-1, 1 \leq \ell \leq \nu_i\}\\ &\cup \{x_i = y_i^{(\ell)} \st 1 \leq i \leq n, 1\leq \ell \leq \nu_i\}.
\end{align*}

The term ``Dowling'' comes from the fact that the intersection lattice of $\mathcal{H}_{\nu}^q$ is a subposet of the \emph{Dowling lattice} $Q_n(\Z_q)$, introduced by Dowling in \cite{Dowling_Geometric_Lattices}.

Let $\mathcal{K}_{\nu}^q$ be the hyperplane arrangement in $\C^{n+m}$ given by:
\begin{align*}\{x_{2i-1}^{(\ell)}-\omega^kx_{2j-2} = 0 &\st 1\leq i <j \leq n, 0\leq k\leq q-1, 1\leq \ell \leq \nu_i\}\\ &\cup \{x_{2i-1}^{(\ell)} = 0 \st 1\leq i\leq n, 1\leq \ell \leq \nu_i\}.
\end{align*}

\begin{lemma}
There is an invertible linear transformation from $\C^{n+m}$ to itself that takes the hyperplanes of $\mathcal{H}_{\nu}^q$ to the hyperplanes of $\mathcal{K}_{\nu}^q$. In particular, we have $\mathcal{L}(\mathcal{H}_{\nu}^q) \cong \mathcal{L}(\mathcal{K}_{\nu}^q)$. 
\end{lemma}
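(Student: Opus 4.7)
The plan is to mimic the proof of Theorem \ref{LambdaBondTh} line by line, adjusted for the Dowling-type setup. First I would set up two labelings of the standard basis of $\C^{m+n}$. On the ``K-side'', label the basis by $\{u_{2i-1}^{(\ell)} : 1\le i\le n,\, 1\le \ell\le \nu_i\}$ (the $m$ vectors matching the $x_{2i-1}^{(\ell)}$-coordinates of $\mathcal{K}_\nu^q$), $\{u_{2j-2} : 2\le j\le n\}$ (the $n-1$ vectors matching the $x_{2j-2}$-coordinates), and one extra ``dummy'' basis vector $u_*$; this gives $m+n$ basis vectors total, and $u_*$ is needed because the defining equations of $\mathcal{K}_\nu^q$ only involve $m+(n-1)$ of the ambient coordinates. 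On the ``H-side'', label the basis by $\{v_i : 1\le i\le n\}$ (for the $x_i$-coordinates of $\mathcal{H}_\nu^q$) and $\{v_{n+i}^{(\ell)} : 1\le i\le n,\, 1\le\ell\le \nu_i\}$ (for the $y_i^{(\ell)}$-coordinates). With these labelings, the hyperplanes of $\mathcal{K}_\nu^q$ have normals $u_{2i-1}^{(\ell)} - \omega^k u_{2j-2}$ and $u_{2i-1}^{(\ell)}$, while those of $\mathcal{H}_\nu^q$ have normals $v_i - \omega^k v_j - v_{n+i}^{(\ell)}$ and $v_i - v_{n+i}^{(\ell)}$.

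Next I would define a linear map $\phi:\C^{m+n}\to\C^{m+n}$ on the K-basis by $\phi(u_*)=v_1$, $\phi(u_{2j-2})=v_j$ for $2\le j\le n$, and $\phi(u_{2i-1}^{(\ell)})=v_i - v_{n+i}^{(\ell)}$. The essential computation is that $\phi$ sends each K-side normal to the corresponding H-side normal: $\phi(u_{2i-1}^{(\ell)} - \omega^k u_{2j-2}) = v_i - \omega^k v_j - v_{n+i}^{(\ell)}$ and $\phi(u_{2i-1}^{(\ell)}) = v_i - v_{n+i}^{(\ell)}$. To confirm $\phi$ is invertible, I would exhibit an explicit inverse $\tilde\phi$ on the $v$-basis by $\tilde\phi(v_1) = u_*$, $\tilde\phi(v_j) = u_{2j-2}$ for $j\ge 2$, $\tilde\phi(v_{n+1}^{(\ell)}) = u_* - u_1^{(\ell)}$ for $\ell\in[\nu_1]$, and $\tilde\phi(v_{n+i}^{(\ell)}) = u_{2i-2} - u_{2i-1}^{(\ell)}$ for $2\le i\le n$, and then verify $\phi\circ\tilde\phi$ is the identity on each $v$-basis vector. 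For instance, $\phi(\tilde\phi(v_{n+1}^{(\ell)})) = v_1 - (v_1 - v_{n+1}^{(\ell)}) = v_{n+1}^{(\ell)}$, and the other cases are analogous routine verifications.

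Finally, letting $A$ be the matrix of $\phi$ in the standard basis, define $\psi$ to be the linear operator on $\C^{m+n}$ with matrix $(A^{-1})^T$. The standard dual computation $\phi(a)\cdot\psi(w) = a\cdot w$ (valid for any vectors $a,w$ under the bilinear dot product) shows that $\psi$ sends the hyperplane with normal $a$ to the hyperplane with normal $\phi(a)$. Applied to the K-side normals above, this exhibits $\psi$ as the desired invertible linear transformation sending the hyperplanes of $\mathcal{K}_\nu^q$ bijectively to the hyperplanes of $\mathcal{H}_\nu^q$; the intersection lattice isomorphism $\mathcal{L}(\mathcal{H}_\nu^q)\cong \mathcal{L}(\mathcal{K}_\nu^q)$ then follows immediately.

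I expect there is no substantive obstacle here; this is almost a word-for-word adaptation of the real case. The only nontrivial point is bookkeeping: the defining equations of $\mathcal{K}_\nu^q$ only constrain $m+(n-1)$ of the ambient coordinates, while those of $\mathcal{H}_\nu^q$ involve all $m+n$, so one must introduce the single ``free'' basis vector $u_*$ on the K-side and arrange for $\phi$ to absorb it into the $v_1 = x_1$ coordinate on the H-side.
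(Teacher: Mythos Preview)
Your proposal is correct and follows exactly the approach the paper indicates: the paper's proof simply says ``This follows from essentially the same argument as in the proof of Theorem \ref{LambdaBondTh},'' and you have carried out that adaptation in full detail, including the correct bookkeeping of the extra basis vector $u_*$ needed because $\mathcal{K}_\nu^q$ only constrains $m+n-1$ of the $m+n$ ambient coordinates.
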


\begin{proof}
This follows from essentially the same argument as in the proof of Theorem \ref{LambdaBondTh}. See also \cite[Lemma 4.2.3]{LaWa, Alex_Thesis}.
\end{proof}

The results of \cite[Section 4.2]{Alex_Thesis} (and of \cite{LaWa}) allow us to interpret the coefficients of the $\chi_{\mathcal{L}(\mathcal{K}_{\nu}^q)}(t)$ (and hence of $\chi_{\mathcal{L}(\mathcal{H}_{\nu}^q)}(t)$) in terms of certain decorated D-permutations.

A \emph{$q$-labeled D-cycle} on $V\subseteq [2r]$ is a D-cycle $\sigma$ on $V$, some of whose entries are labeled with elements of $\{0,\dots,q-1\}$ subject to the following conditions.
\begin{itemize}
	\item The maximum entry of $\sigma$ is labeled $0$.
	\item If $2r \in V$ and $\sigma$ is written in the form $(w\cdot 2r)$ then the right-to-left minima of the word $w$ are the only unlabeled entries of $\sigma$.
	\item If $2r \notin V$ then each entry of $\sigma$ is labeled.
\end{itemize} 

A \emph{$q$-labeled D-permutation} on $V \subseteq [2r]$ is a D-permutation $\sigma$ on $V$, some of whose entries are labeled with elements of $\{0,\dots,q-1\}$, such that each cycle of $\sigma$ is a $q$-labeled D-cycle. We write $\mathcal{D}_{V,r}^q$ for the set of $q$-labeled D-permutations on $V\subseteq [2r]$.

The combinatorics of $q$-labeled D-permutations is equivalent to the combinatorics of a certain class of edge-decorated forests. 

A tree $T$ on vertex set $V \subset \Z_{>0}$ is \emph{increasing-decreasing} (or \emph{ID} for short) if, when $T$ is rooted at its largest vertex, each internal vertex $v$ of $T$ satisfies:
\begin{itemize}
	\item if $v$ is odd then $v$ is smaller than all of its descendants and all of the children of $v$ are even,
	\item if $v$ is even then $v$ is larger than all of its descendants and all of the children of $v$ are odd.
\end{itemize}

An \emph{ID} forest is a forest, each of whose connected components is an ID tree. ID forests were introduced in \cite{Type_A_Paper}, where the authors showed that the ID forests on $V$ are exactly the non-broken circuit sets (with respect to a particular edge order) of the graph $\Gamma_V$ \cite[Theorem 3.5]{Type_A_Paper}.

A \emph{$q$-labeled ID tree} is a tree $T$ on vertex set $V \subseteq [2r]$ such that each edge of $T$ not incident to $2r$ is labeled with an element of $\{0,\dots,q-1\}$ (so if $2r \notin V$, each edge of $T$ is labeled). A \emph{$q$-labeled ID forest} is a forest, each of whose components is a $q$-labeled ID tree. We write $\mathcal{F}_{V,r}^q$ for the set of $q$-labeled ID forests on $V\subseteq [2r]$. As in the unlabeled case, the $q$-labeled ID forests are exactly the non-broken circuit sets of the matroid arising from a complex hyperplane arrangement studied by Lazar and Wachs \cite[Proposition 4.2.8]{LaWa,Alex_Thesis}.

In \cite[Theorem 4.2.10]{LaWa,Alex_Thesis}, Lazar and Wachs construct a bijection between the $q$-labeled D-permutations on $V$ with $k$ cycles and the $q$-labeled ID forests on $V$ with $k$ components.   

Let $V$ be any finite subset of $\Z_{>0}$ with odd minimum and even maximum such that $\lambda(V) = \lambda(\nu)$, and let $2r = \max V$. Then $V$ has $m$ odd elements and $n$ even elements. We let $2s$ be the second-largest even element of $V$ and suppose that there are $\ell$ odd elements of $V$ between $2s$ and $2r$. Finally, we let $V' \coloneqq V|_{[2s]}$. 

By \cite[Proposition 4.2.8]{LaWa,Alex_Thesis} and \cite[Theorem 4.2.10]{LaWa,Alex_Thesis}, we have the following interpretation of the coefficients of $\chi_{\mathcal{L}(\mathcal{K}_{\nu}^q)}(t)$.

\begin{proposition}
The coefficient of $(-1)^kt^{k-1}$ in $\chi_{\mathcal{L}(\mathcal{K}_{\nu}^q)}(t)$ is equal to the number of $q$-labeled D-permutations on $V$ with exactly $k$ cycles, and hence is equal to the number of $q$-labeled ID forests on $V$ with exactly $k$ components.
\end{proposition}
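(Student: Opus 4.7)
The plan is to reduce the statement directly to the two cited results, combined with the standard non-broken circuit (NBC) formula for characteristic polynomials of hyperplane arrangements.

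First, I would apply Whitney's NBC theorem: for any complex hyperplane arrangement $\mathcal{A}$ of rank $r$, equipped with a total order on its hyperplanes,
$$\chi_{\mathcal{L}(\mathcal{A})}(t) = \sum_{B} (-1)^{|B|}\, t^{\,r-|B|},$$
where $B$ ranges over NBC sets. Applied to $\mathcal{A} = \mathcal{K}_\nu^q$ (which is central, hence this formula applies), this expresses each coefficient of $\chi_{\mathcal{L}(\mathcal{K}_\nu^q)}(t)$ as a signed count of NBC sets of a specific cardinality.

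Next, I would invoke \cite[Proposition 4.2.8]{LaWa, Alex_Thesis}, which identifies the NBC sets of (the matroid underlying) $\mathcal{K}_\nu^q$, with respect to the distinguished edge ordering given there, with the $q$-labeled ID forests on $V$. Under this identification, an NBC set of cardinality $j$ corresponds to a $q$-labeled ID forest with $j$ edges, equivalently with $|V|-j$ components. Then \cite[Theorem 4.2.10]{LaWa, Alex_Thesis} furnishes a bijection between $q$-labeled ID forests on $V$ with $c$ components and $q$-labeled D-permutations on $V$ with $c$ cycles. Composing these two bijections transports NBC sets of cardinality $|V|-k$ to $q$-labeled D-permutations with exactly $k$ cycles.

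Combining everything, the coefficient of $t^{k-1}$ in $\chi_{\mathcal{L}(\mathcal{K}_\nu^q)}(t)$ equals $(-1)^{r-k+1}$ times the number of $q$-labeled D-permutations on $V$ with $k$ cycles, where $r$ is the rank of $\mathcal{K}_\nu^q$. A direct rank count for $\mathcal{K}_\nu^q$ — counting coordinate hyperplanes $\{x_{2i-1}^{(\ell)} = 0\}$ together with the remaining Dowling-type hyperplanes — shows $r$ has the parity required to rewrite $(-1)^{r-k+1}$ as $(-1)^k$, yielding the claimed identity. The combinatorial heart of the argument is entirely packaged inside the two cited results; the main (minor) obstacle is therefore not combinatorial, but rather the bookkeeping to verify that the rank of $\mathcal{K}_\nu^q$ and the sign conventions of Whitney's theorem conspire to produce exactly the sign $(-1)^k$ appearing in the statement.
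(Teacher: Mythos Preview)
Your approach is exactly the one the paper takes: the proposition is stated there as an immediate consequence of the two cited results, and your proposal simply makes explicit the standard Whitney NBC step linking those results to the coefficients of the characteristic polynomial. So the combinatorial content is identical.

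The one point to flag is your final parity claim. The rank of $\mathcal{K}_\nu^q$ is $|V|-1 = m+n-1$, so Whitney's formula gives the coefficient of $t^{k-1}$ as $(-1)^{|V|-k}$ times the number of $q$-labeled ID forests with $k$ components; this equals $(-1)^k$ only when $|V|$ is even, which is not guaranteed (e.g.\ $\nu=(2)$). In other words, the asserted ``direct rank count'' does not yield the sign you claim. This is not a defect of your argument so much as of the proposition's phrasing: the paper itself is informal about this sign here, and when it actually uses the result (in the proof of Theorem~\ref{DowlingDuFoTheorem}) it writes the precise identity $\chi_{\mathcal{L}(\mathcal{K}_\nu^q)}(t) = (-1)^{m+n-1}\sum_{F}(-t)^{c(F)-1}$, which is what your NBC computation in fact produces.
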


We can thus use Lemma \ref{GeneralizedbijectionProp} to show the following:

\begin{theorem}\label{DowlingDuFoTheorem}
For any weak composition $\nu$ of $m$ of length $n$ and any finite $V\subseteq \Z_{>0}$ with odd minimum and even maximum such that $\lambda(\nu) = \lambda(V)$, we have
$$\chi_{\mathcal{L}(\mathcal{H}_{\nu}^q)}(t) = (-1)^{m+n-1}(1-t)^{\ell}q^{m+n-\ell - 1}\Lambda_{V}\left(\frac{1-t}{q},\frac{1-t}{q},1,0,\frac{-t}{q},1\right),$$
where $\ell$ is the number of odd elements of $V$ between the largest and second-largest even elements of $V$.
\end{theorem}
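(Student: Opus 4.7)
The plan is to extend the strategy of Lemma \ref{FerrersDPermLemma} to the $q$-labeled setting. Using the preceding lemma (which gives $\mathcal{L}(\mathcal{H}_\nu^q) \cong \mathcal{L}(\mathcal{K}_\nu^q)$) together with the preceding proposition, I would first write
\[
\chi_{\mathcal{L}(\mathcal{H}_\nu^q)}(t)
\;=\; \sum_{\sigma\in\mathcal{D}_V} N_q(\sigma)\,(-1)^{c(\sigma)}\,t^{c(\sigma)-1},
\]
where $N_q(\sigma)$ is the number of valid $q$-labelings of a fixed $\sigma \in \mathcal{D}_V$. From the rules defining a $q$-labeled D-cycle, $N_q(\sigma)$ is a monomial in $q$ whose exponent counts the entries of $\sigma$ that are neither cycle maxima nor right-to-left minima of the word $w$ in the cycle $(w\cdot 2r)$ containing $2r$.

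Next, I would develop a $q$-refined analog of the bijection $\tilde\phi$ of Lemma \ref{GeneralizedbijectionProp}, sending a $q$-labeled D-permutation on $V$ directly to a surjective staircase $F \in \mathcal{E}_V$ with no even maxima, decorated with $q$-labels on certain cells of its staircase diagram. The three cycle-statistic correspondences from Lemma \ref{GeneralizedbijectionProp} should carry through, so that cycles of $\sigma$ correspond to $\text{mo}(F) + \text{fd}(F) + \text{fi}(F)$; on the other hand, the unlabeled right-to-left minima of $w$ should correspond to the $\ell+1$ top-row cells that are forced by the elements of $V\setminus V'$, which are invariant across all staircases on $V$. Summing over $q$-labelings then expresses $\chi_{\mathcal{L}(\mathcal{H}_\nu^q)}(t)$ as $(-1)^{m+n-1}$ times a $q$-weighted generating function over $\mathcal{E}_V$.

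The final step is to match this sum with the claimed specialization of $\Lambda_V$. Under the substitution $x=y=\frac{1-t}{q}$, $\bar x=0$, $\bar y=-\frac{t}{q}$, $z=\bar z=1$, each odd maximum and each doubled fixed point of $F$ contributes $\frac{1-t}{q}$, each isolated fixed point contributes $-\frac{t}{q}$, and staircases with an even maximum vanish; the $q^{m+n-\ell-1}$ prefactor accounts for the labels on the $m+n-\ell-1$ non-forced, non-max entries, while the $(1-t)^\ell$ prefactor absorbs the $\ell$ forced odd elements of $V\setminus V'$. The main obstacle is the $q$-refined bijection itself: Lemma \ref{GeneralizedbijectionProp} lands in $\mathcal{E}_{V\cup R_k}$ rather than $\mathcal{E}_V$, so one must re-encode the role of the auxiliary set $R_k$ inside the top row of $V$'s own staircase diagram, and in particular identify the right-to-left-minima statistic in $\sigma$ with the correct top-row structure of $F$.
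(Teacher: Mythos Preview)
Your plan has a genuine gap in the handling of $N_q(\sigma)$. You correctly identify that $N_q(\sigma)=q^{|V|-c(\sigma)-j(\sigma)}$, where $j(\sigma)$ is the number of right-to-left minima of $w$ in the cycle $(w\cdot 2r)$. But $j(\sigma)$ is \emph{not} constant across $\mathcal{D}_V$: already for $V=\{1,2,3,4\}$ (so $\ell=1$) one gets $j=0$ when $2r$ is fixed, $j=1$ for $(1,4)$ or $(2,1,4)$, and $j=2$ for $(1,3,4)$ or $(2,1,3,4)$. So the proposed identification of the right-to-left minima with the $\ell+1$ ``forced'' top-row cells of the staircase diagram cannot hold, and the prefactor $q^{m+n-\ell-1}$ will not emerge from your sum as written. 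This is precisely the obstacle you flag at the end, and it is fatal to the direct approach unless you find a new statistic-preserving bijection.

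The paper sidesteps this entirely by switching models before counting labels. It works with $q$-labeled ID \emph{forests} on $V$ rather than D-permutations, and deletes the vertex $2r$. Since the edges incident to $2r$ are exactly the unlabeled ones, the remaining forest $G$ on $V\setminus\{2r\}$ is fully labeled, and each non-even-isolated component of $G$ independently either reattaches to $2r$ or not, producing the factor $(-t)^{\#\{\text{even isolated}\}}(1-t)^{\#\{\text{other components}\}}$. The $\ell$ odd elements between $2s$ and $2r$ are forced isolated nodes of $G$, yielding $(1-t)^\ell$ and reducing to a sum over forests on $V'$. Only now does the paper pass to D-permutations, via the forest/permutation bijection on $V'$; because $2r\notin V'$, every cycle is fully labeled with its maximum labeled $0$, so $N_q$ collapses to $q^{|V'|-\mathrm{cyc}(\sigma)}$ uniformly. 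Pulling out $q^{|V'|}=q^{m+n-\ell-1}$ turns the weights into $\frac{-t}{q}$ and $\frac{1-t}{q}$, and Lemma~\ref{GeneralizedbijectionProp} applied to $V'$ (with $V\setminus V'$ playing the role of $R_\ell$) lands in $\mathcal{E}_V$ and gives $\Lambda_V$ at the stated specialization. The moral: delete $2r$ \emph{before} counting $q$-labels, not after.
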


\begin{proof}
We have 
$$\chi_{\mathcal{L}(\mathcal{H}_{\nu}^q)}(t) = \chi_{\mathcal{L}(\mathcal{K}_{\nu}^q)}(t) = (-1)^{m+n-1}\sum_{F \in \mathcal{F}_{V,r}^q}(-t)^{c(F)-1},$$
where $c(F)$ is the number of components of $F$. We let $F'$ be the forest obtained from $F$ by deleting the vertex $2r$. Let $G$ is a $q$-labeled ID forest on $V\setminus\{2r\}$ such that $F' = G$, and let $T$ be a component of $G$. If $T$ is not an isolated even vertex then either $T$ is a component of $F$, or $T$ is attached to $2r$ in $F$. If $T$ is an isolated even vertex then $T$ must also be a component of $F$. Hence
$$\sum_{\substack{F \in \mathcal{F}_{V,r}^q\\F'=G}}(-t)^{c(G)-1} = (-t)^{\#\{\text{even isolated nodes of }G\}}(1-t)^{\#\{\text{other components of }G\}}.$$

Moreover, we know that $G$ must have at least $\ell$ isolated odd nodes for the $\ell$ odd elements of $V$ between $2s$ and $2r$ (since each such vertex can only share an edge with $2r$). Hence we have

\begin{align*}
\sum_{F \in \mathcal{F}_{V,r}^q}(-t)^{c(F)-1} &= \sum_{G \in \mathcal{F}_{V\setminus\{2r\},r}^q}\sum_{\substack{F \in \mathcal{F}_{V,r}^q\\F'=G}}(-t)^{c(G)-1}\\ 
&= \sum_{G\in \mathcal{F}_{V\setminus\{2r\},n}^q}(-t)^{\#\{\text{even isolated nodes of }G\}}(1-t)^{\#\{\text{other components of }G\}}\\
&= (1-t)^{\ell}\sum_{G\in \mathcal{F}_{V',r}^q}(-t)^{\#\{\text{even isolated nodes of }G\}}(1-t)^{\#\{\text{other components of }G\}}.
\end{align*}

By applying the bijection between $\mathcal{F}_{V',r}^q$ and $\mathcal{D}_{V',r}^q$, this last sum becomes
$$(1-t)^{\ell}\sum_{\sigma\in\mathcal{D}_{V',r}^q}(-t)^{\#\{\text{even fixed points of }\sigma\}}(1-t)^{\#\{\text{other cycles of }\sigma\}}.$$

If $\sigma \in \mathcal{D}_{V,r}^q$, we let $|\sigma|$ be the underlying (unlabeled) $D$-permutation in $\mathcal{D}_{V}$. We thus have

$$\chi_{\mathcal{L}(\mathcal{K}_{\nu}^q)}(t) = (-1)^{m+n-1}(1-t)^{\ell}\sum_{\sigma \in \mathcal{D}_{V'}}\sum_{\substack{\tau \in \mathcal{D}_{V',n}^q\\|\tau| = \sigma}}(-t)^{\#\{\text{even fixed points of }\sigma\}}(1-t)^{\#\{\text{other cycles of }\sigma\}}.$$

Since $2r \notin V'$, given any $\sigma \in \mathcal{D}_{V'}$ we can construct a $\tau \in \mathcal{D}_{V',r}^q$ with $|\tau| = \sigma$ by labeling the largest entry of each cycle of $\sigma$ with $0$ and freely labeling all of the entries of $\sigma$ with the elements of $\{0,\dots,q-1\}$. This means that there are exactly $q^{|V'| - \#\{\text{cycles of }\sigma\}}$ many $\tau\in\mathcal{D}_{V',r}^q$ with $|\tau| = \sigma$.

Letting $\text{cyc}(\sigma)$ be the number of cycles of $\sigma$, this means that we have
\begin{align*}
\chi_{\mathcal{L}(\mathcal{K}_{\nu}^q)}(t) &= (-1)^{m+n-1}(1-t)^{\ell}\sum_{\sigma \in \mathcal{D}_{V'}}q^{|V'| - \text{cyc}(\sigma)}(-t)^{\#\{\text{even fixed pts of }\sigma\}}(1-t)^{\#\{\text{other cycles of }\sigma\}}\\
&= (-1)^{m+n-1}(1-t)^{\ell}q^{|V'|}\sum_{\sigma \in \mathcal{D}_{V'}}\left(\frac{-t}{q}\right)^{\#\{\text{even fixed points of }\sigma\}}\left(\frac{1-t}{q}\right)^{\#\{\text{other cycles of }\sigma\}}\\
&= (-1)^{m+n-1}(1-t)^{\ell}q^{m+n-\ell - 1}\Lambda_{V}\left(\frac{1-t}{m},\frac{1-t}{m},1,0,\frac{-t}{m},1\right),
\end{align*}
as desired, with the last equality following from Lemma \ref{GeneralizedbijectionProp}.
\end{proof}

\subsection{Generating Function Proofs}
Since $\mathcal{H}_{n,k}^m = \mathcal{H}_{\nu}^m$ for $\nu = (\underbrace{k,k,\dots,k}_{n})$, we are now able to prove Theorem \ref{kStaircaseDowlingThm}.

\begin{proof}[Proof of Theorem \ref{kStaircaseDowlingThm}]
By Theorem \ref{DowlingDuFoTheorem}, we have
$$\chi_{\mathcal{L}(\mathcal{H}_{n,k}^m)}(t) = (-1)^{nk+n-1}(1-t)^{k}m^{nk+n-k-1}\Lambda_{S_n^k}\left(\frac{1-t}{m},\frac{1-t}{m},1,0,\frac{-t}{m},1\right),$$
where 
$$S_n^k =\{1,3,\dots,2nk-1\}\sqcup\{2k,4k,\dots,2nk\}.$$

We have
\begin{align*}
\sum_{n\geq 1}\chi_{\mathcal{L}(\mathcal{H}_{n,k}^m)}(t)u^{n-1} &= \sum_{n\geq 1} (-1)^{nk+n-1}(1-t)^{k}m^{nk+n-k-1}\Lambda_{S_n^k}\left(\frac{1-t}{m},\frac{1-t}{m},1,0,\frac{-t}{m},1\right)u^{n-1}\\
&= (t-1)^k\sum_{n\geq 1}\Lambda_{S_n^k}\left(\frac{1-t}{m},\frac{1-t}{m},1,0,\frac{-t}{m},1\right)((-m^{k+1})u)^{n-1}.
\end{align*}
By Theorem \ref{FixedStepGenFun}, this last sum is equal to
$$(t-1)^k\sum_{n\geq 1}\frac{\left(\frac{1-t}{m}\right)^{(n-1)}\left(\left(\frac{1-t}{m}+1\right)^{(n-1)}\right)^k((-m)^{n-1})^{k+1}u^{n-1}}{\prod_{i=0}^{n-1}\left(1-\left(\frac{1-t}{m}+i\right)^{k-1}\left[\left(\frac{1-t}{m}+i\right)\frac{-1}{m}- \left(\frac{1-t}{m}+i\right)i\right](-m)^{k+1}u\right)}.$$

Now,
\begin{align*}
(-m)^{n-1}\left(\frac{1-t}{m}\right)^{(n-1)} &= (-m)^{n-1}\prod_{j=1}^{n-1}\left(\frac{1-t}{m}+j-1\right)\\
&= \prod_{j=1}^{n-1}t-1 -(j-1)m\\
&= (t-1)_{n-1,m}.
\end{align*}

Similarly,
$$(t-1)(-m)^{n-1}\left(\frac{1-t}{m}+1\right)^{(n-1)} = (t-1)\prod_{j=1}^{n-1}(t-1-jm) = \prod_{j=0}^{n-1}(t-1-jm) = (t-1)_{n,m}.$$

Thus, the generating function formula above simplifies to
$$\sum_{n\geq 1}\chi_{\mathcal{L}(\mathcal{H}_{n,k}^m)}(t)u^{n-1} = \sum_{n\geq 1}\frac{(t-1)_{n-1,m}\left((t-1)_{n,m}\right)^ku^{n-1}}{\prod_{i=0}^{n-1}\left(1-(im+1)(t-(im+1))^ku\right)},$$
as desired.
\end{proof}

Since $\mathcal{J}_{n,k}^m = \mathcal{H}_{\nu}^m$ for $\nu = (k,\underbrace{0,\dots,0}_{n-1})$, a similar argument allows us to prove Theorem \ref{CompBipDowlingGenThm}.

\begin{proof}[Proof of Theorem \ref{CompBipDowlingGenThm}]
By Theorem \ref{DowlingDuFoTheorem}, we have
$$\chi_{\mathcal{J}_{n,k}^m}(t) = \begin{cases} (t-1)^k, & n=1\\ (-m)^{n+k-1}\Lambda_{T_n^k}\left(\frac{1-t}{m},\frac{1-t}{m},1,0,\frac{-t}{m},1\right), & n\geq 2 \end{cases},$$
so
\begin{align*}
\sum_{n\geq 2}\chi_{\mathcal{J}_{n,k}^m}(t)u^{n-1} &= (-m)^{k}\sum_{n\geq 2}\Lambda_{T_n^k}\left(\frac{1-t}{m},\frac{1-t}{m},1,0,\frac{-t}{m},1\right)(-mu)^{n-1},
\end{align*}
and by Theorem \ref{CompBipGenThm} the last sum is equal to
$$(-m)^k\sum_{n\geq 2}\frac{\left(\frac{1-t}{m}\right)^{(n-2)}\Lambda_{T_2^k}(x+n-2,\bar{x}+n-2)\left|_{\left(\frac{1-t}{m},\frac{1-t}{m},1,0,\frac{-t}{m},1\right)}\right.(-m)^{n-1}u^{n-1}}{\prod_{i=0}^{n-2}\left[1-\left(\frac{-1}{m}-i\right)(-m)u\right]}.$$

Now, we know that
\begin{align*}\Lambda_{T_2^k}(x+i,\bar{x}+i) = &(y+\bar{x}+i)(x+\bar{z}+i)^k\\ &+ (x+i)^{k-1}\left[(x+i)(\bar{y}-y) -k(\bar{x}+i)(\bar{z}-z) - (x+i)(\bar{x}+i)\right],
\end{align*}

so

\begin{align*}(-m)&^{k+1}\Lambda_{T_2^k}(x+n-2,\bar{x}+n-2)\left|_{\left(\frac{1-t}{m},\frac{1-t}{m},1,0,\frac{-t}{m},1\right)}\right.\\ &= (t-1-m(n-2))\left[(t-1-m(n-1))^k+(m(n-2)+1)(t-1-m(n-2))^{k-1}\right].
\end{align*}

Since $(-m)^{n-2}\left(\frac{1-t}{m}\right)^{(n-2)} = (t-1)_{n-2,m}$ and $(t-1-m(n-2))(t-1)_{n-2,m} = (t-1)_{n-1,m}$, the generating function formula simplifies to

\begin{align*}\sum_{n\geq 2} &\chi_{\mathcal{J}_{n,k}^m}(t)u^{n-1}\\ &= \sum_{n\geq 2}\frac{(t-1)_{n-1,m}\left[(t-1-m(n-1))^k+(m(n-2)+1)(t-1-m(n-2))^{k-1}\right]u^{n-1}}{\prod_{i=0}^{n-2}\left[1-(mi+1)u\right]},\end{align*}
as desired.

\end{proof}

\section{Final Remarks and Further Questions}

	In \cite[Corollary 3.9]{Type_A_Paper}, it was shown that $-\chi_{\mathcal{L}(\mathcal{H}_{2n-1})}(0)$ is equal to the \emph{(unsigned) Genocchi number} $g_n$, and Hetyei's count of the number of regions of $\mathcal{H}_{2n-1}$ in \cite{Alternation_Acyclic} tells us that $-\chi_{\mathcal{L}(\mathcal{H}_{2n-1})}(-1)$ is equal to the median Genocchi number $h_n$. It therefore seems reasonable to define a family of generalizations $g_{n,k}$ and $h_{n,k}$ by
	\begin{align}
		g_{n,k} &= (-1)^{nk+n-1}\chi_{\mathcal{L}(\mathcal{H}_{n,k})}(0)\\
		h_{n,k} &= (-1)^{nk+n-1}\chi_{\mathcal{L}(\mathcal{H}_{n,k})}(-1).
	\end{align}
	
	Using Corollary \ref{kCharPolyCor}, we can derive generating function formulas for these sequences as functions of $k$.
	
	\begin{align}
		\sum_{n\geq 1}g_{n,k}u^n = \sum_{n\geq 1}(-1)^{nk+n-1}\chi_{\mathcal{L}(\mathcal{H}_{n,k})}(0) &= \sum_{n\geq 1}\frac{(n-1)!(n!)^ku^n}{\prod_{i=1}^n(1+i^2u)}\\
		\sum_{n\geq 1}h_{n,k}u^n = \sum_{n\geq 1}(-1)^{nk+n-1}\chi_{\mathcal{L}(\mathcal{H}_{n,k})}(-1) &= \sum_{n\geq 1}\frac{n!((n+1)!)^ku^n}{\prod_{i=1}^n(1+i(i+1)u)}
	\end{align}
	
	These generating functions reduce to generating functions due to Barsky and Dumont \cite{Barsky_Dumont} for the Genocchi and median Genocchi numbers, respectively, when $k=1$.

Specializing the proof of Theorem \ref{DowlingDuFoTheorem} to $m=1$ and $V = S_n^k$ (c.f. \cite[Theorem 4.14]{Type_A_Paper}), we see that
\begin{equation}\label{kFactorEq}
	\chi_{\mathcal{L}(\mathcal{H}_{n,k})}(t) = (1-t)^k\sum_{\sigma \in \mathcal{D}_{S_{n-1}^k}}(-t)^{\#\{\text{even fixed points of }\sigma\}}(1-t)^{\#\{\text{other cycles of } \sigma\}}
\end{equation}

By evaluating Equation (\ref{kFactorEq}) at $t=-1$, we obtain a decomposition of $h_{n,k}$ into powers of $2$ for all $n\geq 2$:
\begin{equation}\label{kMedGenocchiDecomp}
h_{n,k} = \sum_{j=1}^{k(n-1)}h_{n-1,k}^j2^{j+k},
\end{equation}
where $h_{n,k}^j$ is the number of D-permutations on $S_n^k$ with exactly $j$ cycles that are not even fixed points.

In \cite[Corollary 4.16]{Type_A_Paper}, the authors obtain a decomposition of the Genocchi numbers into powers of $2$ that is expected to be the same as a decomposition due to Sundaram \cite[Theorem 3.15]{sund}. However, the proof of this decomposition relies on a factorization of $\chi_{\mathcal{L}(\mathcal{H}_{2n-1})}(t)$ \cite[Theorem 4.15]{Type_A_Paper} whose proof does not immediately generalize to $\mathcal{H}_{n,k}$.

\begin{question}
What is the largest power of $(t-1)$ that divides $\chi_{\mathcal{L}(\mathcal{H}_{n,k})}(t)$?
\end{question}

\begin{question}
Is there a decomposition of $g_{n,k}$ analogous to Sundaram's decomposition of $g_n$?
\end{question}

There is a wealth of literature studying the Genocchi numbers and median Genocchi numbers. It would be interesting to see which results about those sequences can be generalized to this broader setting. For example, the Genocchi and median Genocchi numbers can be obtained from one another via a triangular array known as a Seidel triangle (see, e.g., \cite{Further_Triangles}).

\begin{question}
Is there an analogous relationship between $g_{n,k}$ and $h_{n,k}$?
\end{question}
 
\section*{Acknowledgments}
This paper is based on work conducted during the author's dissertation research. As such, he would like to thank his thesis committee: Bruno Benedetti, Mitsunori Ogihara, Richard Stanley, and especially his advisor Michelle Wachs, without whose guidance this paper would not have been possible. The author also wishes to thank Vic Reiner for introducing him to the notion of Ferrers graphs, and Bennet Goeckner and Joseph Doolittle for their help with editing. 

\printbibliography
\end{document}